\def\div{\, \mbox{div}\,  }
\def\w{\rho (y)}
\def\p{\phi }
\def\ps{\phi(s)}
\numberwithin{equation}{section}
\def\ibint {\int_{B}}
\def\iint {\int_{-1}^1}
\def\grad{\partial_y}
\def\ia{\int_{s}^{s+1}}
\def \I {B(x_0,T(x_0)-t)}
\def\T{T}
\def\v{{\mathrm{d}}v}
\def\Box{\hfill\rule{2.5mm}{2.5mm}}
\def\t{{\mathrm{d}}\tau}
\def\s{{\mathrm{d}}s}
\def\y{{\mathrm{d}}y}
\def\no{\nonumber}
\def\grad{\nabla}
\def\m {{m}}
\def \er {\mathbb R}
\def\R{{\mathbb {R}}}
\def\cprime{$'$}
\newcommand{\ds}{\displaystyle}
\def\J{\frac{1}{s}}
\def\t{{\mathrm{d}}\tau}
\def\s{{\mathrm{d}}s}
\def\y{{\mathrm{d}}y}
\def\y{{\mathrm{d}}y}
\def\N{\int_{s_1}^{s_2}\iint\! e^{-\frac{2(p+1)s}{p-1}}s^{\frac{2a}{p-1}} F(\p w)\w{\mathrm{d}}y{\mathrm{d}}s}
\theoremstyle{plain}
\newtheorem{thm}{Theorem}
\newtheorem*{thm*}{Theorem}
\newtheorem{coro}[thm]{Corollary}
\newtheorem{prop}{Proposition}[section]
\newtheorem{cor}[prop]{Corollary}
\newtheorem{lem}[prop]{Lemma}
\theoremstyle{definition}
\theoremstyle{remark}
\newtheorem{nb}{Remark}[section]
\def\blfootnote{\xdef\@thefnmark{}\@footnotetext}
\title{\bf The blow-up rate for    a non-scaling invariant   semilinear wave equations }
\author{Mohamed  Ali  Hamza\\
{\it \small 
Imam Abdulrahman Bin Faisal University
P.O. Box 1982 Dammam, Saudi Arabia}\\
Hatem Zaag\\
{\it \small Universit\'e Paris 13, Sorbonne Paris Cit\'e},\\
{\it \small LAGA, CNRS (UMR 7539), F-93430, Villetaneuse, France}
}
\date{}
\begin{document}

\maketitle

\begin{abstract}
 We consider  the    semilinear wave
equation $$\partial_t^2 u -\Delta u =f(u), \quad  (x,t)\in \er^N\times [0,T),\qquad (1)$$ with
$f(u)=|u|^{p-1}u\log^a (2+u^2)$,  where $p>1$ and $a\in \er$.
We show an upper bound for any 
 blow-up solution  of (1).  Then, in  the one space dimensional case,  using this estimate and  the logarithmic property, we prove  that   the exact blow-up rate of any singular solution of (1)  is  given by the ODE solution  associated with $(1)$,
namely $u'' =|u|^{p-1}u\log^a (2+u^2)$. 
  Unlike the pure power  case ($g(u)=|u|^{p-1}u$) the difficulties here  are due to the fact that  equation (1) is not scale  invariant.
\end{abstract}

\medskip

 {\bf MSC 2010 Classification}:  35L05, 35B44, 35L71, 35L67, 35B40

\noindent {\bf Keywords:}  Semilinear  wave equation, 
Blow-up,
log-type nonlinearity.

\section{Introduction}
This paper is devoted to the study of blow-up solutions for the
following semilinear  wave equation:
\begin{equation}\label{gen}
\left\{
\begin{array}{l}
\partial_t^2 u =\Delta u +f(u),\qquad  (x,t)\in \er^N\times [0,T),\\
\\
u(x,0)=u_0(x)\in  H^{1}_{loc,u}(\er^N),\qquad 
\partial_tu(x,0)=u_1(x)\in  L^{2}_{loc,u}(\er^N),
\end{array}
\right.
\end{equation}
where $u(t):x\in{\er^N} \rightarrow u(x,t)\in{\er}$ 
with focusing nonlinearity $f$ defined by:
\begin{equation}\label{deff}
 f(u)=   |u|^{p-1}u\log^a (2+u^2), \quad  p>1,\quad  a\in \er.
 \end{equation}
The spaces  $L^{2}_{loc,u}(\er^N)$ and $H^{1}_{loc,u}(\er^N)$ are  defined by
\begin{equation*}
L^{2}_{loc,u}(\er^N)=\{u:\er^N\rightarrow \er/ \sup_{d\in \er^N}(\int_{|x-d|\le 1}|u(x)|^2dx)<+\infty \},
\end{equation*}
and
\begin{equation*}
H^{1}_{loc,u}(\er^N)=\{u\in L^{2}_{loc,u}(\er^N),|\grad u|\in L^{2}_{loc,u}(\er^N) \}.
\end{equation*}
We assume in addition  that $p>1$  and if $N\ge 2$,  we further  assume that
\begin{equation}\label{subc}
p<p_c\equiv 1+\frac{4}{N-1}.
\end{equation}

\medskip

A  semilinear wave equation with   nonlinearity,  including a  logarithmic factor,  has been introduced
in  various nonlinear physical models in the context of nuclear physics,  wave mechanics, optics,  geophysics  etc ... see e.g. \cite{Bia75, Bia76}.

\medskip

  The defocusing case   has been  studied in the mathematical  literature and the first results
are due to
 \cite{T1} where Tao   proved a global well-posedness and scattering 
  result for   the three dimensional  nonlinear wave equation 
 $\partial_{t}^2 u =\Delta u-|u|^{4}u\log (2+u^2),$
in  the radial case. See also the 
work of Shih \cite{S1}, where the method is refined to treat   $\partial_{t}^2 u =\Delta u-|u|^{4}u\log^c (2+u^2),$ for any $c\in (0,\frac43)$. Later,  Roy   extends  in \cite{R1} the   results (global well-posedness and scattering)
to solutions to the log-log-supercritical equation
 $\partial_{t}^2 u =\Delta u-|u|^{4}u\log^{c} \big(\log (10+u^2)\big),$
for $c$ small,  without any radial assumption. 
This series of works should be considered as  a starting point  for the   understanding of the global behavior of the solutions in  the Sobolev supercritical regime   $\partial_{t}^2 u =\Delta u-|u|^{p}u,$ where  $p>4$. In this direction, we aim  to give  a light in the understanding of 
the superconformal range ( $p>p_c$) related to the blow-up rate of the  solution of  equation \eqref{NLW} below.

\medskip

Let us mention that  the blow-up question for  the   semilinear heat equation 
$ \partial_{t} u=\Delta u+|u|^{p-1}u\log ^a(2+u^2) $ 
 is  studied  by Duong-Nguyen-Zaag  in \cite{DVZ}. More precisely, they   construct for this equation a solution 
which blows up in finite time $T$,  only at one blow-up point $a$, according to the following asymptotic dynamics:
\begin{equation}\label{heatequiv}
u(x,t)\sim \phi(t)\Big(1+\frac{(p-1)|x-a|^2}{4p(T-t)|\log (T-t)|}\Big)^{-\frac{1}{p-1}},\qquad  as\  t\to T,
\end{equation}
  where $\phi(t)$ is is the unique positive solution of the ODE 
\begin{equation}\label{heat1}
\phi'=|\phi|^{p-1}\phi\log ^a(2+\phi^2),\ \qquad\qquad  \lim_{t
\to T}\phi(t)=+\infty.
\end{equation}
Given that we have the same expression in the pure power nonlinearity case ($g(u)=|u|^{p-1}u$)   with   $\phi(t)$ replaced by 
 $\kappa (T-t)^{-\frac1{p-1}}$  (see \cite{BKnonl94}), we see that the effect of the nonlinearity is all encapsulated in the ODE \eqref{heat1}.

\bigskip

Equation  $\eqref{gen}$ is well-posed in
$H^{1}_{loc,u}\times L^{2}_{loc,u}$. This follows from the finite
speed of propagation and the well-posedness in $H^{1}(\er^N)\times L^{2}(\er^N)$.
The existence of
blow-up solutions $u(t)$ of $\eqref{gen}$ follows from
 ODE techniques or the energy-based blow-up criterion by Levine \cite{Ltams74} (see
also   \cite{LT, STV,T2}).  More
blow-up results can be found in Caffarelli and Friedman
 \cite{CFarma85,  CFtams86}, Kichenassamy and Littman
 \cite{KL1cpde93, KL2cpde93}.
Numerical simulations of blow-up are given by Bizo\'n {\it and al.}  (see \cite{Bjnmp01, BBMWnonl10, BCTnonl04, 
BZnonl09}). 

\medskip

If $u$ is an arbitrary blow-up solution of   \eqref{gen}, we define (see for example Alinhac \cite{Apndeta95}) a 1-Lipschitz curve $\Gamma=\{(x,T(x))\}$
such that the maximal influence domain $D$ of $u$ (or the domain of definition of $u$) is written as
\begin{equation}\label{defdu}
D=\{(x,t)\;|\; t< T(x)\}.
\end{equation}
$\bar T=\inf_{x\in {\R^N}}T(x)$ and $\Gamma$ are called the blow-up time and the blow-up graph of $u$.
A point $x_0$ is a non characteristic point
if  there are
\begin{equation}\label{nonchar}
\delta_0\in(0,1)\mbox{ and }t_0<T(x_0)\mbox{ such that }
u\;\;\mbox{is defined on }{\mathcal C}_{x_0, T(x_0), \delta_0}\cap \{t\ge t_0\}
\end{equation}
where ${\cal C}_{\bar x, \bar t, \bar \delta}=\{(x,t)\;|\; t< \bar t-\bar \delta|x-\bar x|\}$.

\bigskip

In this paper, we study the  blow-up rate of any singular solution of \eqref{gen}. Before going on, it is necessary
to mention  that
 the blow-up rate in the case
with pure  power nonlinearity 
\begin{equation}\label{NLW}
\partial^2_{t} u =\Delta  u+|u|^{p-1}u,   ,\,\,\,(x,t)\in \R^N \times [0,T),
\end{equation}
was studied by Merle and Zaag  in \cite{MZajm03, MZimrn05, MZma05}. More precisely, 
they proved  that if $u$  is a solution of \eqref{NLW}
with blow-up graph $\Gamma : \{ x\mapsto T(x)\}$ and $x_{0}$ is a
non-characteristic point, then,  for all $t\in
[\frac{3T(x_{0})}{4},T(x_{0})]$,
 \begin{eqnarray}\label{mzmz}
  0 < \varepsilon_{0}(p)\leq (T(x_{0})-t)^{\frac{2}{
   p-1}}\frac{\|u(t)\|_{L^{2}(\I )}}
{(T(x_{0})-t)^{\frac{N}2}}\\
  +(T(x_{0})-t)^{\frac{2}{ p-1}+1}\Big(\frac{\|\partial_{t} u(t)\|_{L^{2}(\I)}}
  {(T(x_{0})-t)^{\frac{N}2}}
  + \frac{\|\partial_x u(t)\|_{L^{2}(\I )}}{{(T(x_{0})-t)^{\frac{N}2}}}\Big)\leq K,\nonumber
  \end{eqnarray}
 where the constant $K$ depends only on  $p$ and on an upper bound on
 $T(x_{0})$, ${1}/{T(x_{0})}$, $\delta_{0}(x_{0})$ and the initial data
 in $H^{1}_{loc,u}(\R^N)\times L^{2}_{loc,u}(\R^N)$. Namely, the blow-up rate of any singular solution of \eqref{NLW} is given by the solution of the  associated ODE
$u'' =|u|^{p-1}u$.
 Note that this result about the blow-up rate  is valid   
 in the subconformal  and conformal case  ($1<p\le  p_c$).

\medskip

 In a series of papers,   Merle and Zaag \cite{MZjfa07, MZcmp08, MZajm11, MZisol10}
  (see also   C\^ote and Zaag \cite{CZcpam13}) give a full picture of  blow-up
  for solutions  of  equation \eqref{NLW} in one space dimension.
Among other results, Merle and Zaag proved that characteristic
points are isolated and that the blow-up set $\{(x,T(x))\}$ is
${\cal {C}}^1$ near non-characteristic points and corner-shaped near
characteristic points. 
In higher dimensions, the method used in  the one-dimensional case 
does not remain valid
 because there  is no   classification of selfsimilar solutions of equation \eqref{NLW} in the energy space. However, in the radial case outside the origin, Merle and Zaag  reduce to  the one-dimensional case with  perturbation and  obtain the same results as for $N=1$ (see \cite{MZbsm11}
and also
the extension by Hamza and Zaag in \cite{HZkg12}
to the Klein-Gordon equation and other
damped lower-order perturbations of equation
 \eqref{NLW}). Later, Merle and Zaag could address the higher dimensional case in the subconformal
case and prove the stability of the explicit selfsimilar solution  with respect to the blow-up point
and initial data (see 
\cite{MZods, MZods14}).
Considering the behavior of radial solutions at the origin, Donninger and Sch{\"o}rkhuber were able to prove the stability of the ODE solution $u(t) =\kappa_0 (p)(T - t)^{-\frac{2}{p-1}} $  in the lightcone
 with respect to small perturbations in initial data, in
 a stronger topology (see \cite{DSdpde12,DStams14, DScmp16, DSaihp17}). 
 Their approach is based in particular on a good understanding of the spectral properties of the linearized operator in self-similar variables, operator which is not self-adjoint. Recently, by  
 establishing a suitable Strichartz estimates for the critical wave equation    in similarity variables, 
 Donninger in \cite{DDuke}
 prove the stability of the  solution of the ODE
with respect to small perturbations in initial data, in the energy space.
Let us also mention that 
Killip, Stoval and Vi\c san proved in \cite{KSVma14}
that in  superconformal and Sobolev subcritical range, an upper bound on the blow-up rate is available.
This was further refined by Hamza and Zaag in \cite{HZdcds13}.

\medskip

In \cite{HZjhde12, HZnonl12}, using a highly non-trivial perturbative method, we could obtain the blow-up rate for the
 Klein-Gordon equation  and more generally, for equation 
\begin{equation}\label{NLWP}
\partial_t^2 u =\Delta u+|u|^{p-1}u+f(u)+g(\partial_t u ),\,\,\,(x,t)\in \R^N \times [0,T),
 \end{equation}
under the assumptions  $|f(u)|\leq M(1+|u|^q)$ and $|g(v)|\leq M(1+|v|)$,   for some $M > 0$ and $q<p\le  \frac{N+3}{N-1}$. 
In fact, we proved a similar result to $\eqref{mzmz}$,
valid in the subconformal  and conformal case.
  Let us also mention that in  \cite{H1, omar1, omar2}, the
results obtained in  \cite{HZjhde12,HZnonl12}   were extended to the  
strongly perturbed equation  (\ref{NLWP})  with  $|f(u)|\leq M(1+|u|^p\log^{-a}(2+u^2))$,  for some $a > 1$, though keeping the  same condition in $g$. 

\medskip

%
%
%

In  the previous works   \cite{H1, omar1, omar2,HZjhde12,HZnonl12}, we   consider a  class of
 perturbed equations   where the nonlinear term is  equivalent to the pure power $|u|^{p-1}u$   and we obtain  the  estimate 
 \eqref{mzmz}. This is due to the fact that the dynamics is governed by the ODE equation: $u'' =|u|^{p-1}u$.
Furthermore, our proof remains (non trivially) perturbative  with respect to the homogeneous PDE \eqref{NLW}, which is scale invariant.

\bigskip
This leaves unanswered an interesting question: is the scale invariance  property crucial  in deriving the blow-up rate?

\bigskip
 
In fact we {\textit{had the impression}} that the answer was ''yes'', since the scaling invariance induces in  similarity  variables a PDE which is  autonomous in the unperturbed case \eqref{NLW}, and asymptotically autonomous in the perturbed case \eqref{NLWP}.

\bigskip

In this paper we {\textit{prove}} that the answer is ''no'' from the example  on the non  homogeneous PDE \eqref{NLW}. In fact, 
our situation is different from  \eqref{NLW}, and  \eqref{NLWP}.
 Indeed,   the  term like  $|u|^{p-1}u\log^a (2+u^2)$  
 is playing a fundamental role in the dynamics of  the blow-up  solution of \eqref{gen}. More  precisely, we obtain an analogous  result to \eqref{mzmz} but with a logarithmic correction  as shown in \eqref{main1} below.  In fact, the bow-up rate  is
 given by the solution of the  following ordinary differential equation: $u'' =|u|^{p-1}u\log^a (2+u^2)$.

%
%
%
%
%

\medskip
%

Before handling the PDE, we first study the associated ODE  to \eqref{gen}
\begin{equation}\label{v}
v_T'' (t)=  |v_T (t)|^{p-1}v_T(t) \log^{a}\big(v_T^2(t)  +2\big), \quad v(T)=\infty,
\end{equation}
and show  that  the nonlinear term including  the  logarithmic factor   gives raise to a different
dynamic. In fact, thanks to Lemma \ref{asym-psi-T}, 
 we can see that  the solution $v_T$   satisfies
 \begin{equation}\label{equivv}
 \psi_T(t) \sim \kappa_{a}\psi_T(t), \text{ as } t \to\ T,\quad \textrm{ where}\quad  \kappa_{a} =  \left(\frac{2^{1-2a}(p+1)}{(p-1)^{2-a}} \right)^{\frac1{p-1}},
 \end{equation}
and
 \begin{equation}\label{psi}
 \psi_T(t)=(T - t)^{-\frac{2}{p-1}} (-\log (T - t))^{-\frac{a}{p-1}}.
 \end{equation}

\bigskip
%
Let us introduce the following similarity variables,   defined for all
  $x_0\in \er$, $T_0$ such that $0< T_0\le T(x_0)$ by:
\begin{equation}\label{scaling}
y=\frac{x-x_0}{T_0-t},\quad s=-\log (T_0-t),\quad u(x,t)=\psi_{T_0}(t)w_{x_0,T_0}(y,s).
\end{equation}
From (\ref{gen}), the  function $w_{x_0,T_0}$  (we write $w$ for
simplicity) satisfies the following equation for all $y\in B$, $s>0$ and $s\ge  -\log T_0$:
\begin{align}\label{A}
\partial_{s}^2w&=\frac{1}{\rho}\div(\rho \grad w-\rho(y.\grad w)y)+\frac{2a}{(p-1)s}y.\grad w-\frac{2p+2}{(p-1)^2}w+\gamma(s)w\nonumber\\
&-\Big(\frac{p+3}{p-1}-\frac{2a}{(p-1)s}\Big)\partial_s w
-2y.\grad \partial_{s}w+
e^{-\frac{2ps}{p-1}}s^{\frac{a}{p-1}} f(\ps w),
\end{align}
where   
$\rho (y)=(1-|y|^2)^{\alpha}$,
\begin{equation}\label{alpha}
\alpha=\frac{2}{p-1}-\frac{N-1}{2}>0,
\end{equation}
\begin{equation}
   \gamma(s)=\frac{a(p+5)}{(p-1)^2s}-\frac{a(p+a-1)}{(p-1)^2s^2},\label{defgamma}
\end{equation}
and 
\begin{equation}
\ps =e^{\frac{2s}{p-1}}s^{-\frac{a}{p-1}}.\label{defphi}
\end{equation}

\medskip

 This change of variables  is associated to  the nonlinear wave equation including a  logarithmic nonlinearity \eqref{gen}.
In fact, we have the same transformation as in the 
 pure power  case ($g(u)=|u|^{p-1}u$).
In the new
set of variables $(y,s),$ the behavior of $u$ as $t \rightarrow T_0$
is equivalent to the behavior of $w$ as $s \rightarrow +\infty$.  Also, if $T_0=T(x_0)$, then we simply write $w_{x_0}$
instead of $w_{x_0,T(x_0)}$.

\medskip

 The equation (\ref{A}) will be studied in the Hilbert  space $\cal H$
$${\cal H}=\Big \{(w_1,w_2), |
\displaystyle\ibint\Big(\big ( w_1^2 +|\grad w_1|^2-|y.\grad w_1|^2\big)+w_2^2\Big) \y<+\infty \Big \},$$
where $B=B(0,1)$ stands for the unit ball of $\er^N$
 and throughout the paper.

\medskip

Throughout this paper,
$C$  denotes a  generic positive constant
 depending only on $p,N$  and $a,$  which may vary from line to line. Also,  we will use $K$ to denote a  generic positive constant
 depending only on $p,N, a, \delta_0(x_0)$  and initial data   which may vary from line to line.  We write $f(s)\sim g(s)$ to indicate 
$\displaystyle{\lim_{|s|\to \infty}\frac{f(s)}{g(s)}=1}$. Furthermore, we
denote by
\begin{equation}\label{defF}
 F(u)=\int_{0}^{u}f(v){\mathrm{d}}v=\int_0^u|v|^{p-1}v \log^{{a}}(v^2  + 2)\v.
\end{equation}

\medskip

As we  mentioned earlier,  the invariance of equation \eqref{NLW} under  the   scaling transformation
$u \mapsto u_{\lambda}(x,t)=\lambda^{\frac2{p-1}}u(\lambda x,\lambda t)$   was  crucial
in  the construction of the Lyapunov functional in similarity variables (see Antonini and Merle \cite{AM}).  The  fact that  the
 equation \eqref{NLWP} is not invariant under the last scaling transformation  implies 
that the existence of a Lyapunov functional in similarity variables is far from being trivial (see   \cite{H1, omar1, omar2, HZjhde12,HZnonl12}).
  
\medskip

 In this paper, we prove a   polynomial  (in $s$) space-time bound  on  the similarity variables' version   of the solution $u$ of  \eqref{gen}, valid  in any dimensions  in the subconformal case. 
However, our main contribution  lays,  in one space dimension. It consists in the construction of a 
  Lyapunov  functional in similarity variables  for the problem \eqref{A}  and the proof  that   the blow-up rate of any singular solution of \eqref{gen}  is given by  the  solution of the following ODE:
$u'' =|u|^{p-1}u\log^a (2+u^2)$.

\bigskip

Let us give some details regarding our strategy in this paper.

\bigskip

First, we exploit  some functional  to obtain  a rough estimate on the blow-up solution; namely a polynomial (in $s$) bound on the solution  in similarity variables.  The issue  is how to handle the perturbative terms in \eqref{A}.  In fact,
in order to control them, we  view  equation \eqref{A}  as a perturbation of the
 case of a pure power nonlinearity (case where $a=0$ in \eqref{A}) 
  with the following  terms: 
\begin{equation}\label{perb4}
\frac{2a}{(p-1)s}y.\grad w, \quad \gamma(s)w,\quad   \frac{2a}{(p-1)s}\partial_s w\quad \textrm{ and} \quad  e^{-\frac{2ps}{p-1}}s^{\frac{a}{p-1}} f(\ps w).
\end{equation} 
 The first three  terms 
are    lower order terms which were already  handled  in the subconformal perturbative case treated in
   \cite{HZnonl12,omar1}.  
 However, since  the nonlinear term  
$e^{-\frac{2ps}{p-1}}s^{\frac{a}{p-1}} f(\ps w)$ depends on  time $s$,
  we expect the time derivatives to be delicate. 
 Thanks to the fact that   $uf(u)-(p+1)\int_0^uf(v)
{\mathrm{d}}v\sim  
\frac{2a}{p+1}|u|^{p+1}\log^{a-1}(2+u^2)$, as $u\to \infty$,
we   construct a functional    (in Section \ref{section2})
  satisfying this kind of differential inequality:
 \begin{equation}\label{jardin1}
\frac{d}{ds}h(s)\le -\alpha \ibint (\partial_{s}w)^2\frac{\w}{1-|y|^2}\y
+ \frac{C}{s}h(s), 
\end{equation} 
where $\alpha$ is defined in \eqref{alpha}, and this   implies  a
polynomial estimate.

\medskip

Now, we announce the following rough   polynomial space-time estimate:

 \begin{thm}\label{tp1}
\noindent  Consider $u $   a solution of ({\ref{gen}}) with
blow-up graph $\Gamma:\{x\mapsto T(x)\}$ and  $x_0$  a non
characteristic point. Then,  there exists
$t_{0}(x_{0})\in [ 0,T(x_{0}))$ and  $q=q(p,a,N)>0$  such that,   for all $T_0 \in (t_{0}(x_0),T(x_{0})]$,  for all $s\geq -\log (T_0-t_{0}(x_{0}))$,
we have
\begin{equation}\label{feb19}
\int_{s}^{s+1}\!\ibint \big( w^2(y,\tau )+(\partial_{s}w(y,\tau  ))^2+|\grad w(y,\tau )|^2\big)\y \t  \leq K_1 s^q,
\end{equation}
where $w=w_{x_0,T_0}$, 
 $K_1$ depends on $ p, a,  \delta_{0}(x_{0})$, 
 $T(x_{0})$, $t_0(x_0)$ and\\ 
$\|(u(t_0(x_0)),\partial_tu(t_0(x_0)))\|_{H^{1}\times
L^{2}(B(x_0,\frac{T(x_0)-t_0(x_0)}{\delta_0(x_0)}) )}$.
\end{thm}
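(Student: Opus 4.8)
The plan is to establish the polynomial bound \eqref{feb19} by constructing an explicit functional in similarity variables that plays the role of the Antonini--Merle energy and satisfies a differential inequality of the type \eqref{jardin1}. First I would write down the natural energy
\[
E(w(s),s)=\ibint\Big(\tfrac12(\partial_s w)^2+\tfrac12\big(|\grad w|^2-|y.\grad w|^2\big)+\tfrac{p+1}{(p-1)^2}w^2-e^{-\frac{2(p+1)s}{p-1}}s^{\frac{2a}{p-1}}F(\ps w)\Big)\w\,\y,
\]
and compute $\frac{d}{ds}E(w(s),s)$ using the equation \eqref{A}. Integrating by parts against $\w\,\partial_s w$, the leading (autonomous, $a=0$) part produces the good dissipative term $-\alpha\ibint(\partial_s w)^2\frac{\w}{1-|y|^2}\,\y$ coming from the weight $\rho$ and the $-2y.\grad\partial_s w$ term, exactly as in Merle--Zaag. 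The perturbative terms in \eqref{perb4} must then be shown to contribute only $O(1/s)$ relative errors. The first three terms ($\frac{2a}{(p-1)s}y.\grad w$, $\gamma(s)w$, $\frac{2a}{(p-1)s}\partial_s w$) are handled exactly as in \cite{HZnonl12,omar1}: they are bounded using Cauchy--Schwarz by $\frac Cs$ times $\ibint(w^2+|\grad w|^2+(\partial_s w)^2)\w\,\y$ plus a small multiple of the dissipation, absorbing the latter.

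The genuinely new point, and the one I expect to be the main obstacle, is controlling the time dependence inside the nonlinear term $e^{-\frac{2ps}{p-1}}s^{\frac{a}{p-1}}f(\ps w)$, equivalently the explicit $s$-derivative hitting the potential term $-e^{-\frac{2(p+1)s}{p-1}}s^{\frac{2a}{p-1}}F(\ps w)$ in $E$. Differentiating in $s$ and using $\ps=e^{\frac{2s}{p-1}}s^{-\frac{a}{p-1}}$ (so $\frac{\ps'}{\ps}=\frac{2}{p-1}-\frac{a}{(p-1)s}$), one finds that the dangerous contribution is, up to lower-order-in-$1/s$ terms,
\[
-e^{-\frac{2(p+1)s}{p-1}}s^{\frac{2a}{p-1}}\ibint\Big(\ps w\,f(\ps w)-(p+1)F(\ps w)\Big)\frac{\ps'}{\ps}\,\w\,\y,
\]
and here the key algebraic identity $uf(u)-(p+1)F(u)\sim \frac{2a}{p+1}|u|^{p+1}\log^{a-1}(2+u^2)$ as $u\to\infty$ shows this quantity is $O\!\big(\frac1s\big)\cdot e^{-\frac{2(p+1)s}{p-1}}s^{\frac{2a}{p-1}}\ibint|\ps w|^{p+1}\log^{a-1}(\cdots)\,\w\,\y$, hence $O(1/s)$ times a multiple of $|E(w(s),s)|$ plus controllable lower-order terms. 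Getting the signs and the precise constants here, and making sure the logarithmic correction $\log^{a-1}$ does not spoil the smallness when $a>1$, is the delicate computation; one also must check the case $a<0$ where the identity's sign flips but the bound still goes the right way.

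With the differential inequality \eqref{jardin1} in hand, I would then run a Gronwall-type argument: setting $h(s)=E(w(s),s)+\theta$ for a suitable constant $\theta$ (chosen, as in \cite{MZajm03}, so that $h$ stays bounded below by a positive quantity, using that $\int_s^{s+1}$ of the energy controls the solution and vice versa), \eqref{jardin1} gives $h(s)\le K_1 s^q$ for some $q=q(p,a,N)$ after integrating $\frac{d}{ds}(\log h)\le C/s$. Finally, to pass from the bound on $h(s)$ (which controls $\ibint w^2+|\grad w|^2+(\partial_s w)^2$ minus the possibly-negative potential term) to the clean space-time bound \eqref{feb19}, I would integrate the dissipation inequality \eqref{jardin1} over $[s,s+1]$ to control $\int_s^{s+1}\ibint(\partial_s w)^2\frac{\w}{1-|y|^2}$, then use a covering/interpolation argument together with the equation to recover the full $H^1\times L^2$ space-time norm, exactly along the lines of the corresponding step in \cite{MZajm03,HZnonl12}. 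The reduction to $T_0=T(x_0)$ versus general $T_0\in(t_0(x_0),T(x_0)]$ and the dependence of $K_1$ on the listed quantities is then a matter of tracking constants through the finite-speed-of-propagation localization near the non-characteristic point $x_0$.
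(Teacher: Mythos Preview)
Your overall strategy is sound and close to the paper's, but there is a genuine gap in the step where you claim the dangerous nonlinear remainder is ``$O(1/s)$ times a multiple of $|E(w(s),s)|$ plus controllable lower-order terms.'' Your identification of the bad term is correct: after the algebraic identity $uf(u)-(p+1)F(u)\sim \frac{2a}{p+1}|u|^{p+1}\log^{a-1}(2+u^2)$ and the identity $e^{-\frac{2(p+1)s}{p-1}}s^{\frac{2a}{p-1}}\ps w f(\ps w)=\frac1{s^a}|w|^{p+1}\log^a(2+\p^2 w^2)$, the explicit $s$-derivative of the potential in $E$ contributes a term of size $\frac{C}{s^{a+1}}\ibint|w|^{p+1}\log^a(2+\p^2 w^2)\w\,\y$. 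But this is \emph{not} bounded by $\frac{C}{s}|E|$: writing the positive quadratic part of $E$ as $P$ and the potential part as $Q$, one has $E=P-Q$ and the bad term is $\frac{C}{s}Q=\frac{C}{s}(P-E)$. The piece $\frac{C}{s}P$ (which contains $\frac{C}{s}\ibint(|\grad w|^2+w^2)\w\,\y$) is neither absorbed by the dissipation $-\alpha\ibint(\partial_s w)^2\frac{\w}{1-|y|^2}\,\y$ nor by $\frac{C}{s}(E+\theta)$, so your Gronwall step $\frac{d}{ds}(\log h)\le C/s$ with $h=E+\theta$ does not close. (The constant shift $\theta$ you invoke from \cite{MZajm03} plays no such role there: in the pure-power case $E$ is already a genuine Lyapunov functional.)

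The paper closes exactly this gap by adjoining to $E_N$ a large multiple of the virial-type correction
\[
J_N(w(s),s)=-\frac1s\ibint w\,\partial_s w\,\w\,\y,
\]
and working with $H_{m,N}=E_N+mJ_N$. Multiplying \eqref{A} by $w\w$ and integrating gives $\frac{d}{ds}J_N$ a term $-\frac{1}{s^{a+1}}\ibint|w|^{p+1}\log^a(2+\p^2 w^2)\w\,\y$ with a \emph{negative} sign, together with negative $\frac1s$-multiples of all the quadratic pieces of $P$. Choosing $m=m_0$ large enough makes these negative contributions absorb both the bad nonlinear term and the $\frac{C}{s}P$ leftovers, yielding the differential inequality $\frac{d}{ds}H_{m_0,N}\le -\text{(dissipation)}+\frac{m_0(p+3)}{2s}H_{m_0,N}+Ce^{-2s}$. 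Only then does the Gronwall argument go through (the paper packages it as the decreasing functional $N_{m_0,N}=s^{-b}H_{m_0,N}+\sigma e^{-s}$ with $b=\frac{m_0(p+3)}{2}$), after which the positivity argument and the covering step proceed as you describe. In short: replace your constant shift $h=E+\theta$ by $H_{m,N}=E+mJ_N$ with $m$ large; without the $J_N$ correction the inequality \eqref{jardin1} cannot be obtained.
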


\medskip

In the original variables,  Theorem \ref{tp1} implies the following:
\begin{coro}\label{cor1}
\noindent  Consider $u $   a solution of ({\ref{gen}}) with
blow-up graph $\Gamma:\{x\mapsto T(x)\}$ and  $x_0$  a non
characteristic point. Then,  there exists
$t_{0}(x_{0})\in [ 0,T(x_{0}))$ and  $q=q(p,a,N)>0$  such that,   for all $t \in [t_{0}(x_0),T(x_{0}))$,  we have
 $$\int_{t}^{T(x_{0})-\frac1{e}(T(x_0)-t)} \!\!\int_{B(x_{0},T(x_{0})-\tau)} \frac{u^2(x,\tau)}{\psi_{T(x_0)}(\tau)(T(x_0)-\tau)^{\frac{N}2} }{\mathrm{d}}x{\mathrm{d}}\tau  \leq   K_{2}\Big(-\log(T(x_{0})-t)\Big)^{q},$$
and
 $$\int_{t}^{T(x_{0})-\frac1{e}(T(x_0)-t)} \!\!\int_{B(x_{0},T(x_{0})-\tau)} \frac{|\nabla u(x,\tau)|^{2}+(\partial_tu(x,\tau))^2}{\psi_{T(x_0)}(\tau)(T(x_0)-\tau )^{\frac{N}2-1}}{\mathrm{d}}x{\mathrm{d}}\tau  \leq   K_{2}\Big(-\log(T(x_{0})-t)\Big)^{q}.$$
\end{coro}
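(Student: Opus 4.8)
\proof
This is the restatement of Theorem~\ref{tp1} in the original variables, obtained by undoing the similarity change of variables \eqref{scaling} with $T_0=T(x_0)$; the plan is thus simply to run this change of variables and then quote Theorem~\ref{tp1}, and I only indicate the main points. Fix $t\in[t_0(x_0),T(x_0))$, write $w=w_{x_0,T(x_0)}$ and $s_1=-\log(T(x_0)-t)$; since $t\ge t_0(x_0)$ one has $s_1\ge-\log(T(x_0)-t_0(x_0))$, so Theorem~\ref{tp1} applies at $s=s_1$ and gives $\int_{s_1}^{s_1+1}\!\ibint\big(w^2+(\partial_\sigma w)^2+|\grad w|^2\big)(y,\sigma)\,\y\,{\mathrm{d}}\sigma\le K_1 s_1^q=K_1\big(-\log(T(x_0)-t)\big)^q$. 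Note also that the original-time interval $[t,\,T(x_0)-e^{-1}(T(x_0)-t)]$ is exactly the image of the similarity-time interval $[s_1,s_1+1]$ under $\tau\mapsto\sigma=-\log(T(x_0)-\tau)$, which is why a single unit interval suffices.

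Next I would transform the integrands. For a fixed original time $\tau$, setting $\sigma=-\log(T(x_0)-\tau)$ and $y=(x-x_0)/(T(x_0)-\tau)$, the map $x\mapsto y$ carries $B(x_0,T(x_0)-\tau)$ onto $B$, with ${\mathrm{d}}x=(T(x_0)-\tau)^N\,\y$ and ${\mathrm{d}}\tau=(T(x_0)-\tau)\,{\mathrm{d}}\sigma$. From $u(x,\tau)=\psi_{T(x_0)}(\tau)\,w(y,\sigma)$ one gets $u^2=\psi_{T(x_0)}^2(\tau)\,w^2(y,\sigma)$ and $\grad_x u=(T(x_0)-\tau)^{-1}\psi_{T(x_0)}(\tau)\,(\grad_y w)(y,\sigma)$, while differentiating in $\tau$ — using $\partial_\tau y=(T(x_0)-\tau)^{-1}y$, $\partial_\tau\sigma=(T(x_0)-\tau)^{-1}$ and the explicit expression \eqref{psi} for $\psi_{T(x_0)}$, which gives $\psi'_{T(x_0)}(\tau)=\frac{\psi_{T(x_0)}(\tau)}{T(x_0)-\tau}\big(\tfrac{2}{p-1}-\tfrac{a}{(p-1)\sigma}\big)$ — yields
\begin{equation*}
\partial_\tau u=\frac{\psi_{T(x_0)}(\tau)}{T(x_0)-\tau}\Big(\big(\tfrac{2}{p-1}-\tfrac{a}{(p-1)\sigma}\big)\,w+y\cdot\grad_y w+\partial_\sigma w\Big)(y,\sigma).
\end{equation*}
Since $|y|\le1$ and $\tfrac{2}{p-1}-\tfrac{a}{(p-1)\sigma}$ is bounded for $\sigma\ge1$ (enlarging $t_0(x_0)$ if necessary), this gives the pointwise bound $|\grad_x u|^2+(\partial_\tau u)^2\le C(T(x_0)-\tau)^{-2}\psi_{T(x_0)}^2(\tau)\big(w^2+|\grad_y w|^2+(\partial_\sigma w)^2\big)(y,\sigma)$. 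Dividing by the normalizing factors of the statement and integrating in $x$ then reduces, for each $\tau$, the inner space integral in the first estimate to $\ibint w^2(y,\sigma)\,\y$ and the one in the second to at most $C\ibint\big(w^2+|\grad_y w|^2+(\partial_\sigma w)^2\big)(y,\sigma)\,\y$, all powers of $T(x_0)-\tau$ cancelling against the Jacobian $(T(x_0)-\tau)^N$.

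Finally I would integrate in $\tau$ over $[t,\,T(x_0)-e^{-1}(T(x_0)-t)]$, i.e.\ in $\sigma$ over $[s_1,s_1+1]$, using ${\mathrm{d}}\tau=(T(x_0)-\tau)\,{\mathrm{d}}\sigma\le{\mathrm{d}}\sigma$ (valid once $s_1\ge0$, again after possibly enlarging $t_0(x_0)$). Both space--time integrals are then bounded by a fixed constant times $\int_{s_1}^{s_1+1}\!\ibint\big(w^2+(\partial_\sigma w)^2+|\grad w|^2\big)(y,\sigma)\,\y\,{\mathrm{d}}\sigma$, and the bound from Theorem~\ref{tp1} recalled above gives the conclusion with $K_2=CK_1$. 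There is no genuine obstacle in this corollary: the only points deserving a little care are the explicit differentiation of $\psi_{T(x_0)}$ and checking that the $\sigma$-dependent coefficient produced by $\partial_\tau u$ stays bounded on $[s_1,s_1+1]$, which is the reason $t_0(x_0)$ may need to be enlarged. $\Box$
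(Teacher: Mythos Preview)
Your approach is precisely what the paper intends: the corollary is stated without proof, as an immediate restatement of Theorem~\ref{tp1} in the original variables, and undoing the change of variables~\eqref{scaling} with $T_0=T(x_0)$ is the only thing to do. So in spirit you match the paper exactly.

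There is, however, one point where you assert rather than compute, and it hides an issue. You write that after dividing by the normalizing factors ``all powers of $T(x_0)-\tau$ cancel against the Jacobian $(T(x_0)-\tau)^N$''. If you actually carry this out for the denominators as printed, they do \emph{not} cancel: for the first inequality the inner space integral becomes $\psi_{T(x_0)}(\tau)\,(T(x_0)-\tau)^{N/2}\int_B w^2\,\y$, and after the time change of variables an extra factor $\psi_{T(x_0)}(\tau)\,(T(x_0)-\tau)^{N/2+1}$ remains (respectively $\psi_{T(x_0)}(\tau)\,(T(x_0)-\tau)^{N/2}$ for the second inequality). Since $\psi_{T(x_0)}(\tau)(T(x_0)-\tau)^{N/2+1}=(T(x_0)-\tau)^{\frac{N}{2}+1-\frac{2}{p-1}}\sigma^{-\frac{a}{p-1}}$, this factor is unbounded for small $p$ in the admissible range, so the statement as literally written does not follow from Theorem~\ref{tp1}.

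This is almost certainly a misprint in the corollary rather than a flaw in your method: comparing with~\eqref{mzmz} and~\eqref{main1}, where the \emph{$L^2$ norm} (not its square) is divided by $\psi_{T(x_0)}(\tau)(T(x_0)-\tau)^{N/2}$, the denominators here should read $\psi_{T(x_0)}^2(\tau)(T(x_0)-\tau)^{N}$ and $\psi_{T(x_0)}^2(\tau)(T(x_0)-\tau)^{N-2}$. With those corrected normalizations your argument goes through exactly as you wrote it. You should perform the cancellation explicitly and flag the discrepancy, rather than asserting it.
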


\begin{nb}
The estimates obtained in Theorem   \ref{tp1} and  Corollary  \ref{cor1}    do not seem
to be optimal unfortunately.  Indeed, we expect the solution of the PDE $u$ to be bounded by the solution of the ODE $\psi_{T(x_0)}$, as in the  case $a=0$. Accordingly, we conjecture that the righ-hand sides in the inequalities in Theorem \ref{tp1}
and Corolllary \ref{cor1} to be constant.
\end{nb}

\bigskip

 Even though the rough estimate obtained  seems bad, it is very useful to
 allow us to derive, in one space dimension,  a  Lyapunov functional for equation \eqref{A}. More precisely, we use this  polynomial estimate and the structure of the
 nonlinear term  to construct    a  Lyapunov functional for equation \eqref{A}  as  a  crucial step to derive  the optimal estimate. 
Let us note that  
the method is valid  only in one dimensional case
and breaks down  in higher dimensional case (see below in Remark \ref{1.7}). For that reason,   Theorem \ref{t1} and Theorem \ref{t2}  given below  are  valid  only in  the  one dimensional case.
Accordingly,
in the rest of this paper, we  consider the  one dimensional case.

\bigskip

To state our main result,  we start by introducing the following 
functionals,
\begin{eqnarray}
E_{1}(w(s),s)\!\!\!&=&\!\!\!\!\iint \Big(\frac{1}{2}(\partial_{s}w)^2+\frac{1}{2}(\partial_y w)^2(1-y^2)+\frac{p+1}{(p-1)^2}w^2-e^{-\frac{2(p+1)s}{p-1}}s^{\frac{2a}{p-1}}   F(\p w)\Big)\w \y, \nonumber\\
&&\no\\
L_0(w(s),s)&=&E_{1}(w(s),s)-\frac1{s\sqrt{s}}
\iint \partial_{s}ww\w\y,\label{5jan1}
\end{eqnarray}
where $F$ is defined by  \eqref{defF}.
Moreover,  for all
 $s\ge  \max (1, -\log T_0)$,  we define the functional
\begin{equation}\label{10dec2}
L(w(s),s)=\exp\Big(\frac{p+3}{\sqrt{s}}\Big) L_0(w(s),s)+\theta e^{-s},
\end{equation}
where $\theta$ is a sufficiently large constant that will be determined later.
We derive that the functional  $L(w(s),s)$ is a decreasing 
  functional  of time  for equation (\ref{A}),  provided that $s$ is  large enough.
Clearly, by  \eqref{5jan1} and \eqref{10dec2},  the  functional $L(w(s),s)$  is a small perturbation of the   natural energy $E_{1}(w(s),s)$.

\medskip

Here is  the statement of  our main theorem in this paper.
 \begin{thm}\label{t1}
Consider   $u $    a solution of ({\ref{gen}}) in one space dimension ($N=1$),  with blow-up graph
$\Gamma:\{x\mapsto T(x)\}$, and  $x_0$  a non characteristic point.
Then there exists $t_1(x_0)\in [0,T(x_0)) $ such that, 
 for all $T_0\in  (t_1(x_0),T(x_0)]$,  for all  $s\ge  -\log(T_0-t_1(x_0))$, we have
\begin{equation}\label{t1lyap}
 L(w(s+1),s+1)-L(w(s),s) \leq -\frac{2}{p-1} \int_{s}^{s+1}\iint (\partial_{s}w)^2\frac{\w}{1-y^2}\y \t,
\end{equation}
where  $w=w_{x_0,T_0}$ is defined in \eqref{scaling}.
\end{thm}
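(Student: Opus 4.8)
The plan is to differentiate $L(w(s),s)$ along the flow of \eqref{A} (in the one-dimensional case), show that the derivative is bounded above by the dissipation term appearing on the right of \eqref{t1lyap} plus a remainder that is controlled by the polynomial bound of Theorem \ref{tp1}, and then absorb that remainder into the dissipation by exploiting the exact asymptotics of the nonlinearity, namely $uf(u)-(p+1)F(u)\sim \frac{2a}{p+1}|u|^{p+1}\log^{a-1}(2+u^2)$ as $u\to\infty$, together with the exponential prefactors built into $L_0$ and $L$. First I would compute $\frac{d}{ds}E_1(w(s),s)$: using \eqref{A}, the $\frac{1}{\rho}\dv(\rho\,\cdot)$ and the $-2y\cdot\grad\partial_s w$ terms produce, after integration by parts against $\rho\,\partial_s w$, the good negative term $-\frac{p+3}{p-1}\int(\partial_s w)^2\w$ (the dominant dissipation) plus the boundary-free structure that the weight $\rho=(1-|y|^2)^\alpha$ is designed to give; the explicit time-dependence of the coefficients and of $\varphi(s)=e^{2s/(p-1)}s^{-a/(p-1)}$ generates extra terms each carrying a factor $1/s$ or $1/\sqrt s$ relative to the leading ones, which is precisely why a polynomial-in-$s$ a priori bound suffices to close the estimate.

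The key algebraic point is that $E_1$ is the \emph{natural} energy: the potential term $-e^{-2(p+1)s/(p-1)}s^{2a/(p-1)}F(\varphi(s)w)$ is chosen so that its $s$-derivative splits into (i) a piece that cancels the $\varphi'/\varphi$-type contribution coming from differentiating $F(\varphi(s)w)$ through its first argument, leaving exactly the quantity $uf(u)-(p+1)F(u)$ evaluated at $u=\varphi(s)w$, and (ii) the genuine dissipation $\int f(\varphi w)\partial_s w$ that pairs with the $e^{-\frac{2ps}{p-1}}s^{a/(p-1)}f(\varphi w)\partial_s w$ term in the energy identity. Step by step: (1) establish the differential identity $\frac{d}{ds}E_1 = -\frac{p+3}{p-1}\int(\partial_s w)^2\w + \frac{2a}{(p-1)s}\int(\partial_s w)^2\w + (\text{cross terms in }1/s) + (\text{term from }uf(u)-(p+1)F(u))$; (2) handle the last, nonlinear, remainder — here I would use the logarithmic asymptotics to bound it by $\frac{C}{s}\int e^{-2(p+1)s/(p-1)}s^{2a/(p-1)}|\varphi w|^{p+1}\log^{a-1}(\cdots)\w$, i.e. $\frac{C}{s}$ times a quantity comparable to $E_1$ modulo lower-order pieces, which after integrating over $[s,s+1]$ and invoking Theorem \ref{tp1} is $O(s^{q-1})$; (3) differentiate the correction $-\frac{1}{s\sqrt s}\int\partial_s w\, w\,\w$: the main contribution is $-\frac{1}{s\sqrt s}\int(\partial_s w)^2\w$ plus $-\frac{1}{s\sqrt s}\int w\,\partial_s^2 w\,\w$, and substituting \eqref{A} for $\partial_s^2 w$ one gets, after integration by parts, a term $+\frac{1}{s\sqrt s}\cdot\frac{2p+2}{(p-1)^2}\int w^2\w$ of the \emph{right sign} to dominate (a fixed multiple of) the energy, at the cost of controllable errors; this is the standard Antonini–Merle/Merle–Zaag mechanism adapted here.

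Then (4) the prefactor $\exp\!\big(\frac{p+3}{\sqrt s}\big)$ in \eqref{10dec2} is tuned so that $\frac{d}{ds}\exp(\frac{p+3}{\sqrt s})=-\frac{p+3}{2}s^{-3/2}\exp(\frac{p+3}{\sqrt s})$, and multiplying $L_0$ by it converts the marginal $\frac{C}{\sqrt s}$-type errors (in particular those from step (3) and from the $1/s$ perturbative terms in \eqref{perb4}) into net negative contributions once $s$ is large, while the additive $\theta e^{-s}$ with $\theta$ large mops up any remaining exponentially small junk; finally (5) integrate the resulting differential inequality $\frac{d}{ds}L(w(s),s)\le -\frac{2}{p-1}\int(\partial_s w)^2\frac{\w}{1-y^2}$ (note $\frac{p+3}{p-1}-\frac{1}{s\sqrt s}\text{-error}\ge \frac{2}{p-1}$ for $s$ large, and the weight $\frac{\rho}{1-|y|^2}=(1-|y|^2)^{\alpha-1}$ arises naturally from the div-structure term) over $[s,s+1]$ to obtain \eqref{t1lyap}, with $t_1(x_0)$ chosen so that $-\log(T_0-t_1(x_0))$ exceeds all the thresholds used above. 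The main obstacle is step (2)–(3): controlling the time-derivative of the nonlinear potential and of the mixed term $\int\partial_s w\, w\,\w$ simultaneously, because the nonlinearity $f(\varphi(s)w)$ carries its own $s$-dependence through $\varphi(s)=e^{2s/(p-1)}s^{-a/(p-1)}$ and one must show the resulting log-corrected terms are genuinely lower order; it is exactly here that the non-scale-invariance bites, and the logarithmic structure $\log^a(2+u^2)$ — rather than a generic perturbation — is what makes the cancellations work, so I would isolate a clean lemma quantifying $uf(u)-(p+1)F(u)$ and the related $\partial_s[F(\varphi(s)w)]$ before assembling the pieces.
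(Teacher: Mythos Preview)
Your outline captures the overall architecture (energy identity, $J$-correction, exponential prefactor, absorption of $\theta e^{-s}$), but there is a genuine gap in step~(2)--(3) that prevents the argument from closing.

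\medskip

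\textbf{The gap.} When you differentiate the potential term $-e^{-\frac{2(p+1)s}{p-1}}s^{\frac{2a}{p-1}}F(\varphi w)$ in $E_1$, the leading remainder (coming from $(p+1)F(u)-uf(u)$ at $u=\varphi w$) is
\[
\frac{C}{s^{a}}\int_{-1}^{1}|w|^{p+1}\log^{a-1}(2+\varphi^2 w^2)\,\rho\,\mathrm{d}y,
\]
\emph{not} $\frac{C}{s}$ times this quantity --- your extra $\frac{1}{s}$ prefactor is an algebra slip. On the other hand, the good nonlinear term produced by the $J$-correction $-\frac{1}{s\sqrt s}\int w\partial_s w\,\rho$ (which you omit in step~(3), keeping only the linear $\int w^2\rho$ contribution) is $-\frac{C}{s^{a+3/2}}\int|w|^{p+1}\log^{a}(2+\varphi^2 w^2)\,\rho$. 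For the latter to absorb the former pointwise one would need $\log(2+\varphi^2 w^2)\ge C s^{3/2}$, which is false since $\log(2+\varphi^2 w^2)\sim\frac{4s}{p-1}$. So the direct use of the asymptotic $uf(u)-(p+1)F(u)\sim\frac{2a}{p+1}|u|^{p+1}\log^{a-1}$ does not close; and saying the remainder is ``$O(s^{q-1})$ after invoking Theorem~\ref{tp1}'' is no help, because a polynomially growing remainder cannot produce a Lyapunov inequality.

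\medskip

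\textbf{What the paper does instead.} The paper first upgrades Theorem~\ref{tp1} to a pointwise bound $\|w(s)\|_{L^\infty(-1,1)}\le K s^{q_1}$ via the one-dimensional Sobolev embedding (this is Proposition~\ref{prop3.1}, and is the place where $N=1$ is essential). Then, rather than estimating the two nonlinear pieces of $\frac{d}{ds}E_1$ separately, it regroups them as
\[
\chi_1(s)=\frac{a}{(p+1)s^{a+1}}\int_{-1}^{1}|w|^{p+1}\log^{a}(2+\varphi^2 w^2)\Big(1-\frac{4s}{(p-1)\log(2+\varphi^2 w^2)}\Big)\rho\,\mathrm{d}y
\]
plus a genuinely lower-order $\chi_2$. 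The factor in parentheses is exactly $\frac{1}{\log(2+\varphi^2 w^2)}\big(\log(2+\varphi^2 w^2)-\frac{4s}{p-1}\big)$, and the $L^\infty$ bound gives $\log(2+\varphi^2 w^2)-\frac{4s}{p-1}=O(\log s)$, hence $\chi_1\le \frac{K\log s}{s^{a+2}}\int|w|^{p+1}\log^{a}\rho$ (Lemma~\ref{2018lem31}). \emph{This} is what the good nonlinear term $-\frac{C}{s^{a+3/2}}\int|w|^{p+1}\log^{a}\rho$ from the $J$-correction can absorb for $s$ large. The rest of your steps (4)--(5) are then essentially as in the paper. In short: you are missing (i) the $L^\infty$ bound of Proposition~\ref{prop3.1}, (ii) the algebraic regrouping that exhibits the cancellation $1-\frac{4s}{(p-1)\log(2+\varphi^2 w^2)}$, and (iii) the observation that it is the nonlinear (not the quadratic $\int w^2\rho$) good term from the $J$-correction that closes the estimate.
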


\medskip

\begin{nb}
We have chosen to present our main  result as Theorem \ref{t1} since
  the existence of a Lyapunov functional in similarity variables is far from being trivial   and it  represents the crucial  step
  in this paper.
\end{nb}
\begin{nb}
Since we crucially need a covering technique in our argument, in fact, we need a uniform version for $x$ near $x_0$
 (see Theorem \ref{t1}'  below).
\end{nb}
\begin{nb}
Let us note that 
our method breaks down in the  case of a   characteristic point, since in the construction of the 
Lyaponov functional 
in similarity variables,  we use  a covering technique in our argument which is not available  at a  characteristic point.
At this moment,  we do not know whether Theorem \ref{t1} continues to hold if  $x_0$ is a   characteristic point.
\end{nb}

As we said earlier, the existence of this Lyapunov functional  $ L(w(s),s)$ together with a blow-up criterion for
equation \eqref{A} make  a crucial step in the derivation of the blow-up
rate for equation \eqref{gen}. Indeed, with the functional $ L(w(s),s)$ and some more work, we are able to
adapt the analysis performed in  \cite{MZajm03, MZimrn05,MZma05} 
 for equation \eqref{NLW} and obtain the following result:

\begin{thm}\label{t2}
{\bf {(Blow-up rate for equation \eqref{gen})}}.\\
Consider   $u $    a solution of ({\ref{gen}})  in one space dimension ($N=1$), with blow-up graph
$\Gamma:\{x\mapsto T(x)\}$ and  $x_0$  a non characteristic point.
Then there exist  $\widehat{S}_2$  large enough  such that

i)
 For all
 $s\ge \widehat{s}_2(x_0)=\max(\widehat{S}_2,-\log \frac{T(x_0)}4)$,
\begin{equation*}
0<\varepsilon_0\le \|w_{x_0}(s)\|_{H^{1}((-1,1))}+ \|\partial_s
w_{x_0}(s)\|_{L^{2}((-1,1))} \le K,
\end{equation*}
where $w_{x_0}=w_{x_0,T(x_0)}$ is defined in (\ref{scaling}).\\
ii)  For all
  $t\in [t_2(x_0),T(x_0))$, where  $t_2(x_0)=T(x_0)-e^{-\widehat{s}_2(x_0)}$, we have
\begin{align}
&&0<\varepsilon_0\le \frac1{\psi_{T(x_0)}(t)}\frac{\|u(t)\|_{L^2(I(x_0,{T(x_0)-t}))}}{ \sqrt{T(x_0)-t}}\label{main1}\\
&&+ \frac{T(x_0)-t}{\psi_{T(x_0)}(t)}\Big
(\frac{\|\partial_tu(t)\|_{L^2(I(x_0,{T(x_0)-t}))}}{
\sqrt{T(x_0)-t}}+
 \frac{\|\partial_x u(t)\|_{L^2(I(x_0,{T(x_0)-t}))}}{ \sqrt{T(x_0)-t}}\Big )\le K,\nonumber
\end{align}
where 
$K=K(p,  a, T(x_0), t_2(x_0),\|(u(t_2(x_0)),\partial_tu(t_2(x_0)))\|_{
H^{1}\times
L^{2}(I(x_0,\frac{T(x_0)-t_2(x_0)}{\delta_0(x_0)}) )})$, \\  $\psi_{T(x_0)}(t)$ is defined in \eqref{psi}, $I(x_0,t)=(x_0+t,x_0-t)$ and $\delta_0(x_0)$ is defined in  \eqref{nonchar}.
\end{thm}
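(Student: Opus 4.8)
The plan is to deduce the blow-up rate from the Lyapunov functional $L(w(s),s)$ constructed in Theorem \ref{t1}, adapting the Merle--Zaag scheme of \cite{MZajm03, MZimrn05, MZma05}. The starting point is the observation that the rescaling \eqref{scaling} with $T_0=T(x_0)$ turns the desired estimate \eqref{main1} into the equivalent assertion (part i)) that, for $s$ large, $\|w_{x_0}(s)\|_{H^1((-1,1))}+\|\partial_s w_{x_0}(s)\|_{L^2((-1,1))}$ is bounded above and below by positive constants; part ii) is then just the translation of part i) back to the original variables, using the explicit form \eqref{psi} of $\psi_{T(x_0)}$. So the whole problem reduces to proving the two-sided $\mathcal H$-bound on $w_{x_0}$.

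\medskip

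\textbf{Upper bound.} First I would integrate the differential inequality \eqref{t1lyap}: since $L(w(s),s)$ is nonincreasing in $s$ (for $s$ large), it is bounded above by $L(w(s_0),s_0)$ at some fixed large time $s_0$, and telescoping \eqref{t1lyap} over unit intervals yields
\begin{equation*}
\int_{s_0}^{\infty}\iint (\partial_s w)^2\frac{\w}{1-y^2}\,\y\,\t <\infty.
\end{equation*}
Next, using the definition \eqref{5jan1}--\eqref{10dec2} of $L$ together with the monotonicity, one gets that $E_1(w(s),s)$ stays bounded above; combining this with the polynomial space-time bound of Theorem \ref{tp1} (to control the lower-order cross term $\frac1{s\sqrt s}\iint \partial_s w\, w\,\w\,\y$ and the nonlinear term $e^{-\frac{2(p+1)s}{p-1}}s^{\frac{2a}{p-1}}F(\p w)$ via interpolation), one should obtain that $\|w_{x_0}(s)\|_{H^1}+\|\partial_s w_{x_0}(s)\|_{L^2}\le K$ for all large $s$. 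The key algebraic input here is the asymptotics $uf(u)-(p+1)F(u)\sim \frac{2a}{p+1}|u|^{p+1}\log^{a-1}(2+u^2)$ already used in Section \ref{section2}, which guarantees that the nonlinear contribution to $E_1$ has a sign that works in our favour modulo controllable errors.

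\medskip

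\textbf{Lower bound.} This is the delicate part and where a blow-up criterion for equation \eqref{A} enters. The strategy, following \cite{MZma05}, is by contradiction: if along some sequence $s_n\to\infty$ one had $\|w_{x_0}(s_n)\|_{H^1}+\|\partial_s w_{x_0}(s_n)\|_{L^2}\to 0$, then, using the energy estimates and the finite-speed-of-propagation structure, the solution $u$ would extend past $T(x_0)$ near $x_0$, contradicting the fact that $x_0\in\Gamma$ (more precisely contradicting $T(x_0)$ being the blow-up time, using that $x_0$ is non-characteristic so that the cone ${\cal C}_{x_0,T(x_0),\delta_0}$ is available). Making this rigorous requires a small-data/continuation argument for \eqref{A} in $\mathcal H$, with the time-dependent perturbative terms \eqref{perb4} absorbed exactly as in the proof of Theorem \ref{tp1} and Theorem \ref{t1}; the fact that the coefficients $\frac{2a}{(p-1)s}$, $\gamma(s)$ decay and that the nonlinear coefficient $e^{-\frac{2ps}{p-1}}s^{\frac{a}{p-1}}$ is harmless on bounded sets of $w$ is what makes the continuation uniform in $s$.

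\medskip

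\textbf{Covering and uniformity.} Finally, because the lower bound argument needs control not just at $x_0$ but for $x$ in a neighbourhood of $x_0$ along the blow-up graph, I would invoke the uniform version of Theorem \ref{t1} (Theorem \ref{t1}$'$ mentioned in the remarks) and a standard covering of a piece of $\Gamma$ by non-characteristic cones, exactly as in \cite{MZajm03}; this is where the one-dimensional restriction is used, since the covering/interpolation step has no known higher-dimensional analogue (cf.\ Remark \ref{1.7}). The main obstacle, as in the pure-power case, is the lower bound: one must rule out the scenario where the rescaled solution decays to zero, and the new difficulty compared to \cite{MZma05} is that the similarity-variables equation \eqref{A} is non-autonomous, so the continuation/blow-up criterion must be proved with $s$-dependent coefficients and one cannot simply quote a time-translation-invariant statement; the polynomial bound of Theorem \ref{tp1} is precisely the tool that lets us handle these $s$-dependent terms uniformly.
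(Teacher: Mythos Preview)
Your proposal is correct in outline and follows the paper's approach: deduce the upper bound from the Lyapunov functional $L$ via the Merle--Zaag scheme, get the lower bound from finite speed of propagation and wellposedness, and use the covering technique throughout. Two points are worth sharpening.

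\textbf{Upper bound.} Your sentence ``combining this with the polynomial space-time bound of Theorem \ref{tp1} \ldots\ via interpolation'' glosses over the one step the paper singles out as nontrivial. Knowing that $E_1(w(s),s)$ (equivalently $L_0$) is bounded does \emph{not} by itself control the nonlinear term $\iint e^{-\frac{2(p+1)s}{p-1}}s^{\frac{2a}{p-1}}F(\phi w)\rho$, since $E_1$ has the form ``quadratic part minus nonlinear part''. The paper's key step (Proposition \ref{projan22}) is to multiply \eqref{A} by $w\rho$, integrate in space-time, and combine the resulting identity with the time-integrated expression of $L_0$; this yields an equation in which the space-time integral of $F(\phi w)$ appears on the left and all other terms on the right can be bounded by $K/\varepsilon + K\varepsilon\cdot(\text{same integral})$ (Lemma \ref{g}), whence absorption gives the uniform bound. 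The polynomial estimate does enter, but not ``via interpolation'': it is used (through the pointwise bound \eqref{16dec5bis}) to show that $\log(2+\phi^2 w^2)\sim s$ on the set $\{\phi w^2\ge 1\}$, which is exactly what makes $\int|w|^{p+1}\rho$ and $\int e^{-\frac{2(p+1)s}{p-1}}s^{\frac{2a}{p-1}}F(\phi w)\rho$ comparable (estimates \eqref{control}--\eqref{0control}). Without this step you would only recover the polynomial bound you started from.

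\textbf{Lower bound.} Your contradiction argument is the right idea and matches the paper, which simply cites finite speed of propagation and local wellposedness in $H^1\times L^2$ (Lemma 3.1 of \cite{MZimrn05}); the non-autonomous coefficients cause no difficulty here because the lower bound is really a statement about $u$ in the original variables, not about equation \eqref{A}.
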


\begin{nb}
As in the pure power nonlinearity  case  \eqref{NLW}, the proof of Theorem \ref{t2} relies on four ideas (the existence of a Lyapunov functional,  interpolation in Sobolev spaces, some
critical Gagliardo-Nirenberg estimates and
a covering technique adapted to the geometric shape of the blow-up surface). It happens that adapting the proof of
\cite{MZimrn05} given in the  pure power nonlinearity  case \eqref{NLW} is straightforward. Therefore, we only present the key argument dedicated to the control of the 4th term in \eqref{perb4}, and refer to \cite{MZajm03, MZimrn05, MZma05}  for the treatment 
of the terms
appearing in the definition of $E_1(w(s),s)$ defined in \eqref{5jan1} and refer to 
 \cite{HZjhde12, HZnonl12,H1, omar1, omar2} 
 for the  control of the  three first  terms of \eqref{perb4}
  for the rest of the proof.
\end{nb}

\begin{nb}
Since we crucially need a covering technique in the  argument of the construction of the Lyapunov functional,
our method breaks down in the case  of a characteristic point  and we are not able to obtain the sharp estimate
as in the unperturbed case \eqref{NLW}. 
\end{nb}

\begin{nb}\label{1.7}
It should be noted here that the restriction  to a one dimensional space is due to the use of  the embedding  $H^1(\R)\hookrightarrow 
L^{\infty}(\R)  $. Unfortunately,  as we pointed   in  the construction of the Lyapunov  functional,
our method breaks down in the case  of higher dimensions,   and we are not able to obtain the sharp estimate
  as in the case of  pure power nonlinearity   \eqref{NLW}. However, as already stated in Theorem \ref{tp1} above,   we can 
 derive  a 
 polynomial in $s$ space-time estimate    in higher dimension  in the subconformal  case ($1<p< \frac{N+3}{N-1}$).
\end{nb}

\begin{nb}
Let us remark  we can  obtain the same  blow-up rate for  the  more general equation 
\begin{equation}\label{NLWlog}
\partial_t^2 u =\partial_x^2 u+|u|^{p-1}u\log^a(2+u^2)+k(u),\,\,\,(x,t)\in \R \times [0,T),
 \end{equation}
under the assumption  that $|k(u)|\leq M(1+|u|^{p}\log^b(2+u^2))$,   for some $M > 0$ and $b<a-1$. 
More precisely,   under this hypothesis,   we can construct  a suitable Lyapunov
functional for  this equation. Then, we  can prove a similar result to $\eqref{main1}$. However, the case where $a-1\le b<a$ seems 
to be out reach with our technics, though we think  we may obtain the same rate as in the unperturbed  case.
\end{nb}


%
%

\medskip

This paper is organized as follows: In Section \ref{section2}, we obtain a rough control 
of the solution $w$ in the subconformal  case. In Section \ref{section3}, in one space dimension and  thanks to the  result obtained, we prove that the 
functional $L(w(s),s)$  is a Lyapunov functional for equation \eqref{A}. Thus, we get Theorem \ref{t1}. Finally, 
applying this last theorem,  we prove
Theorem \ref{t2}.

\section{A polynomial bound for  solution of equation  \eqref{A}} \label{section2}
Consider $u$ a solution of \eqref{gen}  with blow-up graph  $\Gamma:\{x\mapsto T(x)\}$ and  $x_0$  a 
non characteristic point.
This section is devoted to deriving   a uniform version  of  Theorem \ref{tp1} valid  for $x$ near $x_0$.
  More precisely, this is the aim of this section.

\noindent {\bf{Theorem} \ref{tp1}'} 
\label{tp1bis}
{\it
\noindent  Consider $u $   a solution of ({\ref{gen}}) with
blow-up graph $\Gamma:\{x\mapsto T(x)\}$ and  $x_0$  a non
characteristic point. Then,  there exists
$t_{0}(x_{0})\in [ 0,T(x_{0}))$ and  $q=q(a,p,N)>0$  such that,   for all $T_0 \in (t_{0}(x_0),T(x_{0})]$,  for all $s\geq -\log (T_0-t_{0}(x_{0}))$ and $x\in \er^N$ where $|x-x_0|\le \frac{e^{-s}}{\delta_0(x_0)}$, 
we have
\begin{equation}\label{feb19}
\int_{s}^{s+1}\!\ibint \big( w^2(y,\tau )+|\grad w(y,\tau )|^2 +( \partial_{s}w(y,\tau ))^2\big)\y \t \leq K_1 s^q,
\end{equation}
where $w=w_{x,T^*(x)}$ is defined in \eqref{scaling}, with
 \begin{equation}\label{18dec1}
T^*(x)=T_0-\delta_0(x_0)(x-x_0)
\end{equation}
 and $\delta_{0}(x_{0})$ defined in \eqref{nonchar}.  Note that  $K_1$ depends on $ p, a, N,  \delta_{0}(x_{0})$, 
 $T(x_{0})$, $t_0(x_0)$ and
$\|(u(t_0(x_0)),\partial_tu(t_0(x_0)))\|_{H^{1}\times
L^{2}(B(x_0,\frac{T(x_0)-t_0(x_0)}{\delta_0(x_0)}) )}$.
}

%

\bigskip

In order to prove this theorem, we need to construct a  Lyapunov functional for equation \eqref{A}. 
In order to do so,  we start by introducing the following 
functionals:
\begin{align}\label{F0}
E_{N}(w(s),s)=&\ibint \Big(\frac{1}{2}(\partial_{s}w)^2+\frac{1}{2}|\grad w|^2-\frac12|y.\grad w|^2\nonumber\\
&+\frac{p+1}{(p-1)^2}w^2-e^{-\frac{2(p+1)s}{p-1}}s^{\frac{2a}{p-1}}   F(\p w)\Big)\w \y, \nonumber\\
J_{N}(w(s),s)=&-\frac1{s} \ibint   w\partial_{s}w\w \y, \\
H_{N,m}(w(s),s)=&E_{N}(w(s),s)+m J_{N}(w(s),s),\nonumber
\end{align}
where $F$ is given by \eqref{defF}  and $m >0$ is a sufficiently large constant that will be fixed later.

\medskip

As we see above, the target of this section is to prove, for some $\m_0$ large enough,
that the energy $H_{\m_0,N}$ satisfies  the  following inequality: 
\begin{equation}\label{Heta0}
\frac{d}{ds}H_{\m_0,N}(w(s),s))\le - \alpha \ibint (\partial_{s}w)^2\frac{\w}{1-|y|^2}\y
+\frac{\m_0 (p+3)}{2s}H_{\m_0,N}(w(s),s)
+  C e^{-2s},
 \end{equation}
which implies  that 
$H_{m _0,N} (w(s),s)$ satisfies the following polynomial estimate:
\begin{equation}\label{Heta1}
H_{\m_0,N}(w(s),s))\le  K s^{\mu_0},
 \end{equation}
for some $K>0$ and  $\mu_0>0$.
%
%
\medskip

In the remaining part of this section, we
consider $u$ a solution of \eqref{gen}  with blow-up graph  $\Gamma:\{x\mapsto T(x)\}$ and  $x_0$  a 
non characteristic point.
Let   $T_0\in (0, T(x_0)]$, for all 
$x\in \er^N$  such that $|x-x_0|\le \frac{T_0}{\delta_0(x_0)}$, where 
$\delta_0\in(0,1)$  is defined in \eqref{nonchar} and  we write $w$ instead of $w_{x,T^*(x)}$ defined in (\ref{scaling}) 
with $T^*(x)$ given by \eqref{18dec1}.

\subsection{Classical energy estimates}
In this subsection, we state two lemmas which are crucial for the construction of a  Lyapunov functional.
 We begin with bounding
the time derivative of   $E_{N}(w(s),s)$  in the following lemma:
\begin{lem}\label{lem22}  For all   
  $s \geq \max(-\log T^*(x), 1)$, we have 
\begin{align}\label{E011}
\frac{d}{ds}E_{N}(w(s),s)\le& -  \frac{3\alpha}{2}\ibint (\partial_{s}w)^2\frac{\w}{1-|y|^2}\y\\
&+  \frac{C}{s^{a+1}}\ibint |w|^{p+1}\log^a(2+\p^2w^2)\w \y+\Sigma_{1}(s),\no
 \end{align}
where $\Sigma_{1}(s)$  satisfies
\begin{align}\label{E011bis}
\Sigma_{1}(s)\le \frac{C}{s^2}\ibint  |\grad w|^2(1-|y|^2)\w \y+\frac{C}{s^2}\ibint  w^2\w \y  +C e^{-2s}.
 \end{align}
\end{lem}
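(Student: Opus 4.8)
\medskip
\noindent The plan is to differentiate $E_{N}(w(s),s)$ in $s$ directly from \eqref{F0}, to recognise the dissipative term $-\frac{3\alpha}{2}\ibint(\partial_{s}w)^{2}\frac{\w}{1-|y|^2}\y$, and to absorb every remaining contribution into the two error terms of \eqref{E011}. First I would differentiate the quadratic part of $E_{N}$ together with the potential term $\frac{p+1}{(p-1)^{2}}w^{2}$, integrate by parts in $B$ (each boundary term vanishes since $\w$ vanishes on $\partial B$, which is where $\alpha>0$ from \eqref{alpha} is used) and substitute $\partial_{s}^{2}w$ from \eqref{A}; the monomial $-\frac{2p+2}{(p-1)^{2}}w$ of \eqref{A} is killed exactly by the derivative of the potential term, so that $\frac{d}{ds}E_{N}(w(s),s)$ becomes $\ibint\partial_{s}w\big(\tfrac{2a}{(p-1)s}y.\grad w+\gamma(s)w-(\tfrac{p+3}{p-1}-\tfrac{2a}{(p-1)s})\partial_{s}w-2y.\grad\partial_{s}w+e^{-\frac{2ps}{p-1}}s^{\frac{a}{p-1}}f(\ps w)\big)\w\,\y$ minus $\frac{d}{ds}\ibint e^{-\frac{2(p+1)s}{p-1}}s^{\frac{2a}{p-1}}F(\ps w)\w\,\y$. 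For the purely quadratic damping part I would use $\div(\w y)=N\w+y.\grad\w$, $y.\grad\w=-2\alpha|y|^2(1-|y|^2)^{\alpha-1}$ and the identity $N-\frac{p+3}{p-1}=-2\alpha$, which after one integration by parts gives
\[
\ibint \partial_{s}w\,\Big(-\tfrac{p+3}{p-1}\partial_{s}w-2y.\grad\partial_{s}w\Big)\w\,\y=-2\alpha\ibint(\partial_{s}w)^{2}\frac{\w}{1-|y|^2}\y,
\]
the source of the dissipative term.

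Next comes the algebraic cancellation of the nonlinear parts. Differentiating $\ibint e^{-\frac{2(p+1)s}{p-1}}s^{\frac{2a}{p-1}}F(\ps w)\w\,\y$ and using $\ps'=\ps\big(\frac{2}{p-1}-\frac{a}{(p-1)s}\big)$ together with the exponent bookkeeping $e^{-\frac{2ps}{p-1}}s^{\frac{a}{p-1}}\ps^{p}=s^{-a}$ and $e^{-\frac{2(p+1)s}{p-1}}s^{\frac{2a}{p-1}}\ps^{p+1}=s^{-a}$, the term $\ibint e^{-\frac{2(p+1)s}{p-1}}s^{\frac{2a}{p-1}}f(\ps w)\,\ps\,\partial_{s}w\,\w\,\y$ arising there equals $\ibint\partial_{s}w\,e^{-\frac{2ps}{p-1}}s^{\frac{a}{p-1}}f(\ps w)\w\,\y$, hence cancels, with the opposite sign, the nonlinear term inherited from \eqref{A}. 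What survives is
\begin{multline*}
\frac{2}{p-1}e^{-\frac{2(p+1)s}{p-1}}s^{\frac{2a}{p-1}}\ibint\big((p+1)F(\ps w)-\ps w\,f(\ps w)\big)\w\,\y\\
+\frac{a}{(p-1)s}e^{-\frac{2(p+1)s}{p-1}}s^{\frac{2a}{p-1}}\ibint\big(\ps w\,f(\ps w)-2F(\ps w)\big)\w\,\y.
\end{multline*}
Integrating by parts in \eqref{defF} gives the exact identity $uf(u)-(p+1)F(u)=2a\int_{0}^{u}\frac{v^{p+2}}{2+v^2}\log^{a-1}(2+v^2)\,\v$, hence $|uf(u)-(p+1)F(u)|\le C+C|u|^{p+1}|\log^{a-1}(2+u^2)|$, and similarly $|uf(u)-2F(u)|\le C+C|u|^{p+1}\log^{a}(2+u^2)+C|u|^{p+1}|\log^{a-1}(2+u^2)|$.

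The main obstacle is to turn the power $\log^{a-1}$ into $\log^{a}$, gaining the factor $\frac1s$ needed to produce the term $\frac{C}{s^{a+1}}\ibint|w|^{p+1}\log^a(2+\ps^2w^2)\w\,\y$ of \eqref{E011}; this is exactly where the gain of one logarithm in $uf(u)-(p+1)F(u)$ is essential (a crude $|u|^{p+1}$ bound would be insufficient when $0<a<1$). I would split $B=\{\ps|w|\ge e^{cs}\}\cup\{\ps|w|<e^{cs}\}$ for a suitable small $c>0$. On the first set $\log(2+\ps^2w^2)\ge 2cs$, so $\log^{a-1}(2+\ps^2w^2)=\log^a(2+\ps^2w^2)/\log(2+\ps^2w^2)\le\frac{C}{s}\log^a(2+\ps^2w^2)$, which, combined with $e^{-\frac{2(p+1)s}{p-1}}s^{\frac{2a}{p-1}}\ps^{p+1}=s^{-a}$, produces the announced term. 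On the second set $|w|^{p+1}\le e^{-\delta s}$ for some $\delta>2$ and $s$ large (because $\ps^{-1}\sim e^{-\frac{2s}{p-1}}s^{\frac{a}{p-1}}$ and $c$ is small), so these contributions — like the pure constants above multiplied by $e^{-\frac{2(p+1)s}{p-1}}s^{\frac{2a}{p-1}}\le Ce^{-2s}$ — are absorbed into $Ce^{-2s}$; the second integral in the display is treated the same way, the extra $\frac1s$ in front only helping.

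Finally, the three lower-order linear terms feed $\Sigma_{1}(s)$. The term $\frac{2a}{(p-1)s}\ibint(\partial_{s}w)^{2}\w\,\y$ is absorbed into the dissipative term via $\w\le\frac{\w}{1-|y|^2}$, which is why $-2\alpha$ degrades to $-\frac{3\alpha}{2}$ once $s$ is large. For $\frac{2a}{(p-1)s}\ibint\partial_{s}w\,(y.\grad w)\w\,\y$ and $\gamma(s)\ibint\partial_{s}w\,w\,\w\,\y$ I would use Young's inequality after writing $\partial_{s}w\,\w=\big(\partial_{s}w\sqrt{\w/(1-|y|^2)}\big)\big(\sqrt{\w(1-|y|^2)}\big)$, together with $\gamma(s)=O(1/s)$ from \eqref{defgamma} and $(y.\grad w)^2(1-|y|^2)\le|\grad w|^2(1-|y|^2)$; this gives $\Sigma_1(s)\le\frac{C}{s^2}\ibint|\grad w|^2(1-|y|^2)\w\,\y+\frac{C}{s^2}\ibint w^2\w\,\y+Ce^{-2s}$, i.e. \eqref{E011bis}. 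Putting the pieces together yields \eqref{E011}.
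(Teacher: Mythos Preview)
Your proof is correct and follows essentially the same route as the paper's: multiply \eqref{A} by $\partial_s w\,\w$, extract the $-2\alpha$ dissipation via the divergence identity, bound the surviving nonlinear terms $(p+1)F(\ps w)-\ps w f(\ps w)$ and $\ps w f(\ps w)-2F(\ps w)$ using the one-logarithm gain (your inline splitting $\{\ps|w|\gtrless e^{cs}\}$ is precisely what underlies the paper's appendix estimates \eqref{equiv2}--\eqref{equiv3} invoked through \eqref{id2}), and handle the lower-order terms by Young's inequality. The only cosmetic difference is that the paper packages the nonlinear bound into Lemma~\ref{lemm:esth} (with threshold $z^2\ps\gtrless4$) rather than proving it on the spot; note also that your Young step for $\gamma(s)\ibint w\partial_s w\,\w\,\y$ and $\tfrac{2a}{(p-1)s}\ibint\partial_s w\,(y.\grad w)\w\,\y$ does produce an $\varepsilon\ibint(\partial_s w)^2\tfrac{\w}{1-|y|^2}\,\y$ contribution which, together with the $\tfrac{2|a|}{(p-1)s}$ term, must be absorbed into the dissipation---this is the full reason $-2\alpha$ becomes $-\tfrac{3\alpha}{2}$.
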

{\it Proof}: Multiplying $\eqref{A}$ by $\partial_{s} w\!\ \w$ and integrating over  $B$, we obtain
\begin{align}
\frac{d}{ds}E_{N}(w(s),s)=& - 2\alpha \ibint (\partial_{s}w)^2\frac{\w}{1-|y|^2}\y\label{E00}\\
&+\underbrace{\frac{2p+2}{p-1}
e^{-\frac{2(p+1)s}{p-1}}s^{\frac{2a}{p-1}}\ibint\big( F(\p w)-\frac{\p wf(\p w)}{p+1}\big)\w \y}_{\Sigma^1_{1}(s)}\no\\
&\underbrace{-\frac{2a}{p-1}e^{-\frac{2(p+1)s}{p-1}}s^{\frac{2a}{p-1}-1}\ibint \big( F(\p w)-\frac{\p wf(\p w)}{2}\big)\w \y}_{\Sigma^2_{1}(s)}\no\\
&+\underbrace{\gamma (s)\ibint w\partial_sw\w\y
+\frac{2a}{(p-1)s}\ibint (\partial_{s}w)^2\w\y}_{\Sigma^3_{1}(s)}\no\\
&+\underbrace{
\frac{2a}{(p-1)s}\ibint  y.\grad w\partial_{s}w\w \y
}_{\Sigma^4_{1}(s)}.\no
 \end{align}
Now, we control the terms $\Sigma_{1}^{1}(s)$, $\Sigma_{1}^{2}(s)$, $\Sigma_{1}^{3}(s)$  and  $ \Sigma_{1}^{4}(s)$. 
Note from \eqref{defF123},   \eqref{equiv2}  and  \eqref{equiv3}  that
\begin{equation}\label{id2}
 F(\p w)-\frac{\p wf(\p w)}{p+1}\le   C+C \frac{ \p w}{s}f(\p w),
\end{equation}
which implies,  for all $s\ge \max  (-\log T^*(x),1)$,
\begin{equation}\label{sigma11}
\Sigma_{1}^{1}(s)\le C
e^{-\frac{2(p+1)s}{p-1}}s^{\frac{2a}{p-1}-1}
\ibint  \p wf(\p w) 
\w \y+  C e^{-2s}.
\end{equation}
Let us recall, from the expression of $\p= \ps$ defined in \eqref{defphi},  that  we have, for all $s\ge \max  (-\log T^*(x),1)$, 
\begin{equation}\label{id1}
e^{-\frac{2(p+1)s}{p-1}}s^{\frac{2a}{p-1}}  \p wf(\p w)=\frac1{s^{a}}|w|^{p+1}\log^a(2+\p^2w^2).  
\end{equation}
Thus, using \eqref{sigma11} and \eqref{id1}, we obtain, for all $s\ge \max  (-\log T^*(x),1)$,
\begin{equation}\label{sigma111}
\Sigma_{1}^{1}(s)\le
\frac{C}{s^{a+1}}\ibint |w|^{p+1}\log^a(2+\p^2w^2)\w \y+  C e^{-2s}.
\end{equation}
Similarly, by
 \eqref{equiv1} and \eqref{id1},  we obtain easily, for all $s\ge \max  (-\log T^*(x),1)$,
\begin{equation}\label{sigma12}
\Sigma_{1}^{2}(s)\le \frac{C}{s^{a+1}}\ibint |w|^{p+1}\log^a(2+\p^2 w^2)\w \y+  C e^{-2s}.
\end{equation}
%
  By using  the following  basic inequality
\begin{equation}\label{basic1}
ab\le \varepsilon a^2+\frac1{\varepsilon}b^2, \ \forall \varepsilon >0,
\end{equation}
and the expression of $\gamma(s)$ defined in 
  \eqref{defgamma},  we  write, for all $s\geq \max  (-\log T^*(x),1)$ 
\begin{equation}\label{sigma13}
\Sigma_{1}^{3}(s)+
\Sigma_{1}^{4}(s)\leq \frac1{p-1} \ibint (\partial_{s}w)^2\frac{\w}{1-|y|^2}\y+ \frac{C}{s^2}\ibint  \Big(|\grad w|^2(1-|y|^2)+w^2\Big)\w \y.
\end{equation}
The result \eqref{E011} and \eqref{E011bis} follows  immediately from  \eqref{E00},  \eqref{sigma111}, \eqref{sigma12} and  \eqref{sigma13},
which ends the proof of Lemma \ref{lem22}.
\Box
\medskip
\begin{nb}
By showing the estimate proved in Lemma \ref{lem22}, related to the so called  natural functional $E_{N}(w(s),s)$, we have  some
nonnegative  terms  in the right-hand side of \eqref{E011} and this  does not allow to construct a decreasing
 functional (unlike the case of a  pure power nonlinearity). 
The main problem   is related to  the nonlinear term 
$$\frac{1}{s^{a+1}}\ibint |w|^{p+1}\log^a(2+\p^2(s)w^2)\w \y=\frac{1}{s}\ibint w
e^{-\frac{2ps}{p-1}}s^{\frac{a}{p-1}} f(\ps w)\w \y.$$     
 To overcome this problem, we adapt the strategy  used in  \cite{HZjhde12, HZnonl12,H1, omar1, omar2}. More precisely, by using the identity obtained 
 by multiplying  equation \eqref{gen} by $w\w$, 
then integrating over $B$, we can 
  introduce a  new functional $H_{m,N}$,  defined in \eqref{F0} where $m>0$ is sufficiently large and
 will be fixed such that $H_{m,N}$ satisfies a differential inequality similar to  \eqref{jardin1}.  
\end{nb}

 \medskip

We are going to prove the following estimate on the functional $J_{N}(w(s),s)$. 
 \begin{lem}\label{LemJ_0}
For  all $s \geq \max  (-\log T^*(x),1)$, we have 
\begin{align}\label{6nov2018}
\frac{d}{ds}J_{N}(w(s),s)\ \  \le&\ \ \frac{p+3}{2s}E_{0,N}(w(s),s)-\frac{p+7}{4s}\ibint   (\partial_{s}w)^2\w \y\\
&-\frac{p-1}{4s}\ibint ( |\grad w|^2-(y.\grad w)^2)\w\y
- \frac{p+1}{2(p-1)s} \ibint  w^2\w\y\no\\
&
-\frac{p-1}{2(p+1)s^{a+1}}\ibint |w|^{p+1}\log^a(2+\p^2 w^2)  
\w\y+\Sigma_2(s),\no
\end{align}
where  $\Sigma_2(s)$ satisfies
  \begin{align}\label{DEC1}
  \Sigma_{2}(s)\leq& \frac{C}{\sqrt{s}}
 \ibint (\partial_{s}w)^2 \frac{\w}{1-|y|^2}\y+\frac{C}{s\sqrt{s}}\iint   |\grad w|^2(1-|y|^2)\w\y \\
&+\frac{C}{s\sqrt{s}}\ibint  w^2\w\y+\frac{C}{s^{a+2}}\ibint |w|^{p+1}\log^a(2+\p^2w^2)\w \y+  C e^{-2s}.\no
\end{align}
\end{lem}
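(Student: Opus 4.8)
The plan is to compute $\frac{d}{ds}J_N(w(s),s)$ directly from the definition $J_N(w(s),s)=-\frac1s\ibint w\,\partial_s w\,\w\,\y$, substitute the equation \eqref{A} for $\partial_s^2 w$, and then reorganize the resulting terms into the ``good'' negative contributions displayed in \eqref{6nov2018} plus a remainder $\Sigma_2(s)$ which is controlled as in \eqref{DEC1}. Differentiating in $s$ produces three groups: the term $\frac1{s^2}\ibint w\,\partial_s w\,\w\,\y$ coming from the explicit $\frac1s$ factor, the term $-\frac1s\ibint(\partial_s w)^2\,\w\,\y$, and the term $-\frac1s\ibint w\,\partial_s^2 w\,\w\,\y$. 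For the last one I would plug in \eqref{A}. The principal-part piece $-\frac1s\ibint w\,\frac1\rho\dv(\rho\grad w-\rho(y.\grad w)y)\,\w\,\y$ is integrated by parts (using that $\rho=(1-|y|^2)^\alpha$ vanishes on $\partial B$, exactly as in the pure-power case) to yield $+\frac1s\ibint(|\grad w|^2-(y.\grad w)^2)\,\w\,\y$; the mass term $-\frac{2p+2}{(p-1)^2}w$ gives $+\frac{2p+2}{(p-1)^2 s}\ibint w^2\,\w\,\y$; the damping terms $-\frac{p+3}{p-1}\partial_s w$ and $-2y.\grad\partial_s w$ combine (after an integration by parts on the transport term, which produces a harmless $\ibint w\,\partial_s w\,\rho'\cdot$ correction absorbed into $\Sigma_2$) to give the key coefficient balance; and the nonlinear term $e^{-\frac{2ps}{p-1}}s^{\frac a{p-1}}f(\phi(s)w)$ contributes $-\frac1s\ibint w\,e^{-\frac{2ps}{p-1}}s^{\frac a{p-1}}f(\phi(s)w)\,\w\,\y=-\frac1{s^{a+1}}\ibint|w|^{p+1}\log^a(2+\phi^2 w^2)\,\w\,\y$ by \eqref{id1}.

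The next step is the bookkeeping that turns this raw expression into the stated form. One recognizes $\frac{p+3}{2s}E_{0,N}(w(s),s)$ — where $E_{0,N}$ is $E_N$ with the factor $e^{-\frac{2(p+1)s}{p-1}}s^{\frac{2a}{p-1}}F$ kept — by writing $\frac{p+3}{2s}$ times the sum $\frac12(\partial_s w)^2+\frac12(|\grad w|^2-(y.\grad w)^2)+\frac{p+1}{(p-1)^2}w^2-(\text{nonlinear})$ and comparing coefficients with what the computation above produced; the leftover mismatches in the $(\partial_s w)^2$, $(|\grad w|^2-(y.\grad w)^2)$, $w^2$ and $|w|^{p+1}\log^a$ coefficients are precisely the explicit negative terms $-\frac{p+7}{4s}$, $-\frac{p-1}{4s}$, $-\frac{p+1}{2(p-1)s}$, $-\frac{p-1}{2(p+1)s^{a+1}}$ in \eqref{6nov2018} (this is the same algebra as in the pure-power case \cite{MZajm03}, where the coefficients $\frac{p+7}4$, $\frac{p-1}4$, etc. already appear). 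Everything else — the $\frac1{s^2}\ibint w\,\partial_s w\,\w\,\y$ term, the $\gamma(s)\ibint w^2\,\w\,\y$ contribution from the $\gamma(s)w$ term of \eqref{A}, the $\frac{2a}{(p-1)s}y.\grad w$ term, the $\frac{2a}{(p-1)s}\partial_s w$ damping correction, and the boundary/weight corrections from the integrations by parts — is collected into $\Sigma_2(s)$.

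Finally I would estimate $\Sigma_2(s)$ to get \eqref{DEC1}. Each piece is handled by the basic inequality \eqref{basic1} $ab\le\varepsilon a^2+\frac1\varepsilon b^2$ with $\varepsilon$ chosen as a suitable negative power of $s$: for instance $\frac1{s^2}\ibint w\,\partial_s w\,\w\,\y$ is split, using the Hardy-type weight $\frac1{1-|y|^2}$, into $\frac{C}{\sqrt s}\ibint(\partial_s w)^2\frac{\w}{1-|y|^2}\,\y+\frac{C}{s\sqrt s}\ibint w^2\,\w\,\y$ (the powers working out since $s^{-2}=s^{-1/2}\cdot s^{-3/2}$); the $\gamma(s)$ term is $O(1/s)$ so even easier; the transport term $\frac{2a}{(p-1)s}\ibint y.\grad w\,\partial_s w$ type contributions go into $\frac{C}{\sqrt s}\ibint(\partial_s w)^2\frac{\w}{1-|y|^2}+\frac{C}{s\sqrt s}\ibint|\grad w|^2(1-|y|^2)\,\w$; and the extra nonlinear term $\frac{a}{(p-1)}$-type correction from differentiating $\phi(s)$ inside $f$ produces the $\frac{C}{s^{a+2}}\ibint|w|^{p+1}\log^a(2+\phi^2 w^2)\,\w$ term. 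The $Ce^{-2s}$ accounts for the $\log^a(2+\phi^2w^2)$ being comparable to $s^a$ up to terms exponentially small relative to the prefactors (as in the treatment of $\Sigma_1$). The main obstacle is the careful accounting in the second step: getting the precise coefficients $\frac{p+7}4,\frac{p-1}4,\frac{p+1}{2(p-1)},\frac{p-1}{2(p+1)}$ right after the integrations by parts and the identification of the $\frac{p+3}{2s}E_{0,N}$ multiple, and making sure every genuinely small term (powers $s^{-1/2}$ weaker than $s^{-1}$ of the ``good'' terms, and the $s^{-(a+2)}$ nonlinear term) really does land in $\Sigma_2$ with the weights shown — this is where the non-scale-invariance, i.e. the $s$-dependence of $\phi(s)$ and of $\gamma(s)$, forces extra care compared to \cite{MZajm03}.
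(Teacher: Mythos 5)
Your proposal follows the paper's proof essentially verbatim: differentiate $J_N$, substitute equation \eqref{A} for $\partial_s^2w$, integrate by parts, recognize the multiple $\frac{p+3}{2s}E_{N}(w(s),s)$ together with the four explicit negative terms, and absorb everything else into $\Sigma_2(s)$, which is then controlled via \eqref{basic1} and the Hardy-type inequality \eqref{Hardybis}. One small correction of attribution: the $\frac{C}{s^{a+2}}$ nonlinear contribution to $\Sigma_2$ does not arise from ``differentiating $\phi(s)$ inside $f$'' (no such differentiation occurs here, since $J_N$ contains no $F$-term); it comes from the mismatch $F(\phi w)-\frac{\phi w f(\phi w)}{p+1}$ between the potential term of $E_{N}$ and the $w\,f(\phi w)$ term produced by the multiplier identity, which is of order $s^{-1}\,\phi w f(\phi w)$ by \eqref{id2}, hence one power of $s$ better than the main nonlinear term.
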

{\it Proof}: Note that $J_{N}(w(s),s)$ is a differentiable function and   that we get  for all 
 $s \geq \max  (-\log T^*(x),1)$, 
\begin{equation*}\label{j1}
\frac{d}{ds}J_{N}(w(s),s)=-\frac1{s} \ibint   
(\partial_{s}w)^2\w \y-\frac1{s} \ibint   w\partial^2_{s}w\w\y +\frac{1}{s^2} \ibint   w\partial_{s}w\w \y.
\end{equation*}
From equation  $\eqref{A}$, we obtain
\begin{align}\label{j2}
\frac{d}{ds}J_{N}(w(s),s)=&\J \ibint ((\grad w)^2-|y.\grad w|^2)\w \y
-\frac2{s}  \ibint  \partial_swy.\grad w\w\y\no\\
&-\frac1{s^{a+1}}\iint |w|^{p+1}\log^a(2+\p^2w^2)\w \y-\frac{2a}{(p-1)s^2} \ibint   
wy.\grad w\w \y
\no \\
&+\J \Big(\frac{p+3}{p-1}-2N-\frac{2a+1-p}{(p-1)s}\Big) \ibint w\partial_s w\w \y-\J \ibint   
(\partial_{s}w)^2\w \y 
\no\\
&+\J \Big( \frac{2p+2}{(p-1)^2}-\gamma (s) \Big)
  \ibint  w^2\w\y
+\frac{4\alpha}{s} \ibint  w\partial_s w \frac{|y|^2 \w}{1-|y|^2} \y.\no
 \end{align}
According to the expressions of $E_0(w(s),s)$,  $\ps$ defined in \eqref{F0} and  \eqref{defphi} and the identity \eqref{id1} 
with some straightforward computation,  we obtain  \eqref{6nov2018} where
 \begin{equation}\label{R0}
\Sigma_2(s)=\Sigma^1_2(s)+\Sigma_{2}^{2}(s),
\end{equation} 
and 
\begin{align*}
\Sigma_{2}^1(s)=&
\frac{p+3}2e^{-\frac{2(p+1)s}{p-1}}s^{\frac{2a}{p-1}-1} 
  \ibint \Big( F(\p w)-\frac{\p wf(\p w)}{p+1}\Big)\w\y,  \no \\
\Sigma_{2}^2(s)=&-\frac2{s}  \ibint  \partial_swy.\grad w\w\y-\frac{\gamma (s)}{s} 
  \ibint  w^2\w\y\no\\
&+\J \Big(\frac{p+3}{p-1}-2N+\frac{p-1-2a}{(p-1)s}\Big) \ibint w\partial_s w\w \y
\no\\
&+\frac{4\alpha}{s} \ibint  w\partial_s w \frac{|y|^2 \w}{1-|y|^2} \y-\frac{2a}{(p-1)s^2} \ibint   
wy.\grad w\w \y.
\no 
 \end{align*}

\medskip

We are going now to estimate  the  different terms of \eqref{R0}.
Thanks to \eqref{id1} and  \eqref{id2},  we conclude that  for all $s\ge \max  (-\log T^*(x),1)$
\begin{equation}\label{R1}
\Sigma_{2}^{1}(s)\le
\frac{C}{s^{a+2}}\ibint |w|^{p+1}\log^a(2+\p^2w^2)\w \y+  C e^{-2s}.
\end{equation}
By using the inequality \eqref{basic1} and \eqref{defgamma},    we conclude that 
  for all $s\ge \max  (-\log T^*(x),1),$
  \begin{align}\label{R2}
  \Sigma_{2}^{2}(s)\leq& \frac{C}{\sqrt{s}}
 \ibint (\partial_{s}w)^2 \frac{\w}{1-|y|^2}\y+\frac{C}{s\sqrt{s}}\ibint  |\grad w|^2(1-|y|^2)\w\y\no \\
&+\frac{C}{s\sqrt{s}}\ibint  w^2\frac{\w}{1-|y|^2}\y.
  \end{align}
Let us recall  from \cite{MZajm03} the following Hardy type inequality
\begin{equation}\label{0Hardybis}
 \ibint w^2\frac{|y|^2\w}{1-|y|^2}\y\leq C\ibint  |\grad w|^2(1-|y|^2) \w \y+C\ibint w^2\w \y.
\end{equation}
 (see the appendix  in \cite{MZajm03} for a proof). 
Using \eqref{0Hardybis} and  the fact that $\frac{\w}{1-|y|^2}=\w+\frac{|y|^2\w}{1-|y|^2}$, we get 
\begin{equation}\label{Hardybis}
 \ibint w^2\frac{\w}{1-|y|^2}\y\leq C\ibint  |\grad w|^2(1-|y|^2) \w \y+C\ibint w^2\w \y.
\end{equation}
 Thus, it follows from $\eqref{R2}$ and  $\eqref{Hardybis}$ that
for all $s\ge \max(-\log T^*(x),1)$,
  \begin{align}\label{R3}
  \Sigma_{2}^{2}(s)\leq \frac{C}{\sqrt{s}}
 \ibint (\partial_{s}w)^2 \frac{\w}{1-|y|^2}\y+\frac{C}{s\sqrt{s}}\ibint   |\grad w|^2(1-|y|^2)\w\y+\frac{C}{s\sqrt{s}}\ibint  w^2\w\y.
  \end{align}
Consequently, collecting \eqref{R0},   \eqref{R1} and  \eqref{R3}, one easily obtains that $\Sigma_{2}(s)$ satisfies \eqref{DEC1},
 which ends the proof of Lemma \ref{LemJ_0}.
%
%
%
%
 \Box

\subsection{Existence of a decreasing functional for equation \eqref{A}}
In this subsection, by using Lemmas  \ref{lem22} and \ref{LemJ_0}, we are going to construct a decreasing
functional for equation (\ref{A}). Let us define the following functional:
\begin{equation}\label{lyap1}
N_{m,N}(w(s),s)=s^{-\frac{m(p+3)}{2}}H_{m,N}(w(s),s)+ \sigma (m) e^{-s},
\end{equation}
 where $H_{m,N}$ is defined in \eqref{F0},  and   $m$ and  $\sigma=\sigma(m)$ are   constants that  will be determined later.

\medskip

We now state the following proposition:

\begin{prop}\label{proplyap}
There exist $m_0>1$, $\sigma_0>0$,  $S_{1}\geq 1$ and $\lambda_1>0$, such that for all $s \geq \max  (-\log T^*(x),S_{1})$, 
we have the following inequality:
\begin{align}\label{DEC101} 
N_{m_0,N}(w(s+1),s+1)-N_{m_0,N}(w(s),s)\le& - \frac{2}{(p-1)s^b}\ia\ibint\!\!
\frac{(\partial_{s}w)^2}{1-|y|^2}\w
\y\t\\
&\!\!\!-\frac{\lambda_1}{s^{b+1}} \ia\ibint ( |\grad w|^2-(y.\grad w)^2)\w\y\t\no\\
& -\frac{\lambda_1}{s^{a+b+1}} 
\ia \ibint |w|^{p+1}\log^a(2+\p^2w^2)  
\w\y\t\no\\
&-\frac{\lambda_1}{s^{b+1}}\ia\ibint\!\!
w^2\w
\y\t,\qquad\no
\end{align}
where
\begin{equation}\label{defb}
b=\frac{m_0(p+3)}{2}.
\end{equation}
Moreover, there exists
 $S_{2}\geq S_1$ such that for all $s \geq \max  (-\log T^*(x),S_{2})$, 
we have 
 \begin{equation}\label{posi1}
N_{m_0,N}(w(s),s)\geq 0.
\end{equation}
\end{prop}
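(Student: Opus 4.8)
The plan is to combine Lemmas \ref{lem22} and \ref{LemJ_0} to establish first a differential inequality of the form \eqref{Heta0} for $H_{m_0,N}$, then integrate it to obtain \eqref{DEC101} and \eqref{posi1}. First I would add $m$ times \eqref{6nov2018} to \eqref{E011}. The key observation is that the ``bad'' nonnegative nonlinear term $\frac{C}{s^{a+1}}\ibint |w|^{p+1}\log^a(2+\p^2w^2)\w\y$ coming from $\Sigma_1^1,\Sigma_1^2$ in Lemma \ref{lem22} is dominated, for $m$ large enough, by the strictly negative term $-\frac{(p-1)m}{2(p+1)s^{a+1}}\ibint |w|^{p+1}\log^a(2+\p^2 w^2)\w\y$ produced by $m J_N'$; likewise the negative quadratic terms $-\frac{(p+7)m}{4s}\ibint(\partial_sw)^2\w\y$, $-\frac{(p-1)m}{4s}\ibint(|\grad w|^2-(y.\grad w)^2)\w\y$ and $-\frac{(p+1)m}{2(p-1)s}\ibint w^2\w\y$ absorb the error terms $\Sigma_1$ and $m\Sigma_2$, which carry the better weights $s^{-2}$, $s^{-3/2}$, $s^{-a-2}$, and $e^{-2s}$. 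For $s$ large (say $s\ge S_0(m)$) and $m=m_0$ large enough, what remains is exactly an inequality of the type
\begin{equation*}
\frac{d}{ds}H_{m_0,N}(w(s),s)\le -\alpha \ibint (\partial_{s}w)^2\frac{\w}{1-|y|^2}\y+\frac{m_0(p+3)}{2s}H_{m_0,N}(w(s),s)+Ce^{-2s},
\end{equation*}
together with the additional negative contributions $-\frac{\lambda_1}{s}\big(\ibint(|\grad w|^2-(y.\grad w)^2)\w\y+\frac1{s^a}\ibint|w|^{p+1}\log^a(2+\p^2w^2)\w\y+\ibint w^2\w\y\big)$ that survive the absorption and will feed the last three terms of \eqref{DEC101}. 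Here one uses the Hardy-type bound \eqref{Hardybis} and the basic inequality \eqref{basic1} to reshuffle weighted norms; I would also note $E_{0,N}=H_{m_0,N}-mJ_N$ and $|J_N|\le \frac{1}{s}(\text{quadratic})$, so that $E_{0,N}$ appearing in \eqref{6nov2018} can be traded for $H_{m_0,N}$ at the cost of $s^{-1}$-smaller terms.

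Next I would introduce the integrating factor: setting $b=\frac{m_0(p+3)}{2}$ and $\mathcal N(s)=s^{-b}H_{m_0,N}(w(s),s)+\sigma e^{-s}$, a direct computation gives
\begin{equation*}
\frac{d}{ds}\mathcal N(s)\le -\frac{\alpha}{s^b}\ibint(\partial_sw)^2\frac{\w}{1-|y|^2}\y-\frac{\lambda_1'}{s^{b+1}}\Big(\ibint(|\grad w|^2-(y.\grad w)^2)\w\y+\frac1{s^a}\ibint|w|^{p+1}\log^a(2+\p^2w^2)\w\y+\ibint w^2\w\y\Big)+\big(Cs^{-b}-\sigma\big)e^{-s}
\end{equation*}
(using $s^{-b}e^{-2s}\le Cs^{-b}e^{-s}$ and $\frac{d}{ds}(\sigma e^{-s})=-\sigma e^{-s}$). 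Choosing $\sigma=\sigma_0$ large enough kills the last term for $s\ge S_1$, and since $2\alpha\ge \frac{2}{p-1}$ — in fact $\alpha=\frac2{p-1}-\frac{N-1}{2}$, so one needs to check the coefficient in \eqref{DEC101} is indeed achievable; the natural statement has $\frac{2}{(p-1)s^b}$ which is $\le \frac{\alpha}{s^b}$ is false in general, so more carefully one keeps $\frac{2\alpha}{s^b}$ from the outset and notes $2\alpha\ge\frac{2}{p-1}$ fails too — hence I would retain the full coefficient $2\alpha$ through Lemma \ref{lem22} (which already gives $\frac{3\alpha}2$, and the missing $\frac\alpha2$ or rather the needed $\frac2{p-1}$ comes by not wasting it in $\Sigma_1^3+\Sigma_1^4$), matching the $\frac{2}{p-1}$ displayed. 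Integrating this differential inequality over $[s,s+1]$ yields \eqref{DEC101} directly, with $N_{m_0,N}=\mathcal N$ as in \eqref{lyap1}.

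For the positivity statement \eqref{posi1}, the plan is standard: from \eqref{DEC101}, $\mathcal N(s)$ is nonincreasing for $s\ge \max(-\log T^*(x),S_1)$, so it has a limit $\ell\in[-\infty,+\infty)$ as $s\to+\infty$; if $\ell<0$ one would use the space-time control of $\ibint(\partial_sw)^2\w\y$ coming from \eqref{DEC101} (the left side telescopes, the right side is summable because $\frac1{s^b}\cdot(\text{bounded by }K_1s^q)$-type integrals converge once one knows $H_{m_0,N}$ grows at most polynomially, which is \eqref{Heta1}) to contradict the definition of $\mathcal N$; alternatively, the cleanest route is to bound $|J_N|\le\frac1{2s}\ibint((\partial_sw)^2+w^2)\w\y$ and use the definition \eqref{F0} of $E_N$ together with the sign of $F$ and \eqref{basic1} to show $s^{-b}H_{m_0,N}(w(s),s)\ge -\frac{C}{s}\ibint w^2\w\y\cdot s^{-b}+(\text{nonneg})$; combined with \eqref{DEC101} integrated to $+\infty$ and the polynomial bound \eqref{Heta1}, this forces $\mathcal N(s)\ge 0$ for $s$ beyond some $S_2\ge S_1$. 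I expect the main obstacle to be the bookkeeping in the first step: choosing $m_0$ large enough so that the single genuinely dangerous term $\frac{C}{s^{a+1}}\ibint|w|^{p+1}\log^a(2+\p^2w^2)\w\y$ is strictly beaten by $m_0\cdot\frac{p-1}{2(p+1)s^{a+1}}$ of the same quantity, \emph{while simultaneously} keeping enough negativity in the $(\partial_sw)^2$, $|\grad w|^2$ and $w^2$ channels to absorb all of $\Sigma_1+m_0\Sigma_2$ — this requires ordering the choices as: fix $m_0$ to beat the nonlinear term, then choose $S_1$ large depending on $m_0$ so the $s^{-1/2}$, $s^{-1}$ error weights in $\Sigma_2$ are small, then choose $\sigma_0$ and finally $\lambda_1$; and one must also verify the elementary but nontrivial inequalities \eqref{id2} and its consequences \eqref{sigma111}, \eqref{sigma12}, \eqref{R1} hold with constants uniform in $x$ near $x_0$, which is where the hypotheses \eqref{defF123}, \eqref{equiv1}, \eqref{equiv2}, \eqref{equiv3} on $F,f$ are used.
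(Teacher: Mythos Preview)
Your plan for the first part --- deriving \eqref{DEC101} --- is essentially the paper's argument: combine Lemmas \ref{lem22} and \ref{LemJ_0}, choose $m_0$ large so that the negative nonlinear contribution $-\frac{(p-1)m_0}{2(p+1)s^{a+1}}\ibint|w|^{p+1}\log^a(2+\p^2w^2)\w\y$ from $m_0J_N'$ strictly beats the $\frac{C}{s^{a+1}}$ term coming from $\Sigma_1^1+\Sigma_1^2$, then choose $S_1$ large (depending on $m_0$) so that the $s^{-1/2}$ and $s^{-3/2}$ errors in $\Sigma_2$ are absorbed, multiply by the integrating factor $s^{-b}$, and finally pick $\sigma$ to kill the residual $e^{-s}$. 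Your ordering of choices ($m_0$, then $S_1$, then $\sigma$, then $\lambda_1$) is exactly right.

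The positivity argument \eqref{posi1}, however, has a genuine gap. Your two proposed routes both fail. The lower bound $s^{-b}H_{m_0,N}\ge -\frac{C}{s^{b+1}}\ibint w^2\w\y+(\text{nonneg})$ is false: the term $-e^{-\frac{2(p+1)s}{p-1}}s^{\frac{2a}{p-1}}\ibint F(\p w)\w\y$ in $E_N$ is \emph{negative} (since $F\ge 0$) and a priori uncontrolled, so $H_{m_0,N}$ could go to $-\infty$. Integrating \eqref{DEC101} to $+\infty$ gives nothing either: if $\mathcal N(s_0)<0$ then by monotonicity $\mathcal N(s)\le \mathcal N(s_0)<0$ for all $s\ge s_0$, and you have no independent lower bound on $\mathcal N$ to force a contradiction. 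The polynomial estimate \eqref{Heta1} is only an \emph{upper} bound on $H_{m_0,N}$ and does not help here.

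The paper's argument is quite different and supplies the missing idea. Assuming $N_{m_0,N}(w(s_1),s_1)<0$ for some $s_1$, one perturbs the blow-up time: for small $\delta>0$ set $\widetilde w^\delta=w_{x,T^*(x)-\delta}$, which is again a solution of \eqref{A} and, by continuity in $\delta$, still satisfies $N_{m_0,N}(\widetilde w^\delta(s_1+1),s_1+1)<0$. The point is that $\widetilde w^\delta(y,s)$ is expressed through $w$ evaluated at the \emph{bounded} time $-\log(\delta+e^{-s})\to -\log\delta$ as $s\to\infty$; since $u$ is smooth strictly inside its domain, $w(\cdot,-\log(\delta+e^{-s}))$ stays bounded in $L^{\bar p+1}(B)$. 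Then the only possibly negative piece of $N_{m_0,N}(\widetilde w^\delta(s),s)$, namely $-s^{-b}e^{-\frac{2(p+1)s}{p-1}}s^{\frac{2a}{p-1}}\ibint F(\p\widetilde w^\delta)\w\y$, is bounded below by $-Ce^{-(1+2\alpha)s}s^{\frac{2a}{p-1}-b}\to 0$, so $\liminf_{s\to\infty}N_{m_0,N}(\widetilde w^\delta(s),s)\ge 0$, contradicting monotonicity. This ``shift the cone so the tip lies strictly before blow-up'' trick is the essential missing ingredient in your proposal.
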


\vspace{0.3cm}

{\it Proof:}
From  the definition of $H_m$ given  in \eqref{F0},   Lemmas  \ref{lem22}, \ref{LemJ_0} and the classical inequality 
$J_N(w(s),s)\le  \frac1{\sqrt{s}}\ibint (\partial_{s}w)^2\w\y+ \frac1{s\sqrt{s}}\ibint w^2\w\y,$ we can write for all $s\ge \max(-\log T^*(x), 1)$,
\begin{align}\label{DEC9}
\frac{d}{ds}H_{m,N}(w(s),s)\le &
-\Big(\frac{m(p-1)}{2(p+1)}-C_0-\frac{C_0 m}{s}\Big)
\frac{1}{s^{a+1}}
\ibint |w|^{p+1}\log^a(2+\p^2w^2)  
\w\y\no\\
&- \Big(\frac{3}{p-1}-\frac{C_0 m}{\sqrt{s}}\Big)\ibint (\partial_{s}w)^2\frac{\w}{1-|y|^2}\y+\frac{m(p+3)}{2s}H_{m,N}(w(s),s)\no\\
&-\Big(\frac{m(p-1)}{4s}- \frac{C_0m}{s\sqrt{s}}- \frac{C_0}{s^2}\Big)\ibint  ( |\grad w|^2-|y.\grad w|^2)\w \y  \no \\
  &-m\Big(\frac{p+7}{4s}-\frac{C_0}{s\sqrt{s}}\Big)\ibint   (\partial_{s}w)^2\w \y\\
&-\Big( \frac{m(p+1)}{2(p-1)s}-\frac{C_0m}{s\sqrt{s}}-\frac{C_0m}{s^2\sqrt{s}}-\frac{C_0}{s^2}
\Big) \ibint  w^2\w\y+(C_0 m+C_0) e^{-2s},\no
\end{align}
where $C_0$  stands for some universal constant depending only on $N,$ $p$ and $a$.
We first  choose    $m_0$ such that $\frac{m_0(p-1)}{4(p+1)}-C_0=0$,  so 
$$\frac{m_0(p-1)}{2(p+1)}-C_0-\frac{C_0m_0}{s}=m_0\Big(\frac{p-1}{4(p+1)}-\frac{C_0}{s}\Big).$$
We now choose    $S_1=S_1(m_0,a,p,N)$ large enough ($S_1\ge 1$), so that for all $s\ge S_1$, we have
\begin{eqnarray}
\frac{m_0(p-1)}{8(p+1)}-\frac{C_0}{s} \ge 0, \qquad
 \frac{1}{p-1}-\frac{C_0m_0}{\sqrt{s}}\ge 0,\qquad 
\frac{m_0(p-1)}{8}- \frac{C_0m_0}{\sqrt{s}}- \frac{C_0}{s}\ge 0,\no \\
  \frac{p+7}{8}-\frac{C_0}{\sqrt{s}}\ge 0,\qquad \qquad\frac{m_0(p+1)}{4(p-1)}-\frac{C_0m_0}{\sqrt{s}}-\frac{C_0m_0}{s\sqrt{s}}-\frac{C_0}{s}\ge 0.
\end{eqnarray}
Then, we deduce that  for all $s\ge \max(-\log T^*(x), S_1)$,
\begin{align}\label{DEC911}
\frac{d}{ds}H_{m_0,N}(w(s),s)\le &
- \frac{2}{p-1}\ibint (\partial_{s}w)^2\frac{\w}{1-|y|^2}\y+\frac{m_0(p+3)}{2s}H_{m_0,N}(w(s),s)\no\\
&-\frac{\lambda_0}{s^{a+1}}
\ibint |w|^{p+1}\log^a(2+\p^2(s)w^2)  
\w\y\no\\
&-\frac{\lambda_0}{s}\ibint   (|\grad w|^2 -(y.\grad w)^2)\w \y  \\
&-\frac{\lambda_0}{s} \ibint  w^2\w\y+C_0(m_0+1) e^{-2s},\no
\end{align}
 where $\lambda_0=\inf( \frac{m_0(p-1)}{8(p+1)},\frac{m_0(p-1)}{8},
\frac{m_0(p+1)}{4(p-1)},\frac{p+7}8 )$.

By using the definition of $N_{m_0,N}$ given  in  \eqref{lyap1} together with  the estimate \eqref{DEC911}, we  easily prove that  $N_{m_0,N}$ satisfies for all  ${s\ge \max(-\log T^*(x),S_1),}$
\begin{align}\label{M11}
\frac{d}{ds}N_{m_0,N}(w(s),s)\le &
- \frac{2}{(p-1)s^b}\ibint (\partial_{s}w)^2\frac{\w}{1-|y|^2}\y\no\\
&-\frac{\lambda_0}{s^{a+b+1}}
\ibint |w|^{p+1}\log^a(2+\p^2(s)w^2)  
\w\y\no\\
&-\frac{\lambda_0}{s^{b+1}}\ibint   (|\grad w|^2-(y.\grad w)^2)\w \y  
\\
&-\frac{\lambda_0}{s^{b+1}} \ibint  w^2\w\y-e^{-s}\Big(\sigma -C_0(m_0+1)\frac{e^{-s}}{s^b}\Big).\no
\end{align}
We now choose $\sigma =C_0(m_0+1)e^{-S_1}$,  so we have, for all  $s\ge S_1$
 \begin{equation}\label{11jan1}
\sigma -C_0(m_0+1)\frac{e^{-s}}{s^b}\ge 0.  
\end{equation}
By  integrating  in time between $s$ and $s+1$
the  inequality
(\ref{M11}) and using \eqref{11jan1}, we easily obtain
(\ref{DEC101}).
This concludes the proof of the first part of Proposition \ref{proplyap}.

\medskip

 We  prove    \eqref{posi1} here. 
  The argument is the
same as in the corresponding part in \cite{
HZjhde12, HZnonl12,H1, omar1, omar2}. We write the
proof for completeness. Arguing by contradiction, we assume that
there exists   
 $s_1 \geq \max  (-\log T^*(x),S_{2})$ such that $N_{m_0,N}(w(s_1),s_1)<0$, where $S_2=S_2(a,p,N)$  is large enough, 
$w=w_{x,T^*(x)}$. Since the energy $N_{m_0,N}(w(s),s)$ decreases in time, we
have $N_{m_0,N}(w(1+s_1),1+s_1)<0$.

 Consider now for $\delta>0$ the function
$\widetilde{w}^{\delta}(y,s)=w_{x,T^{*}(x)-\delta }(y,s)$. From
(\ref{scaling}), we see that for all $(y,s)\in B\times
[1+s_1,+\infty)$
\begin{equation}\label{blowupbefore}
 \widetilde{w}^{\delta}(y,s)=\frac{\p (-\log (\delta+e^{-s}))}{\ps}
w( \frac{y}{1+\delta e^s},-\log (\delta+e^{-s})),
\end{equation}
where $\p$ defined in \eqref{defphi}. Then, we make the following 3 observations: 
\begin{itemize}
\item  (A) Note that $\widetilde{w}^{\delta}$ is defined in  $ B\times [1+s_1,+\infty)$,
whenever $\delta>0$ is small enough such that
$-\log(\delta+e^{-1-s_1})\ge s_1.$
\item (B) By construction, $\widetilde{w}^{\delta}$
is also a solution of equation (\ref{A}).
\item (C) For $\delta$ small enough, we have
$N_{m_0,N}(\widetilde{w}^{\delta}(1+s_1),1+s_1)<0$ by continuity of the function
$\delta \mapsto N_{m_0,N}(\widetilde{w}^{\delta}(1+s_1),1+s_1)$.
\end{itemize}
Now, we fix $\delta=\delta_1>0$ such that (A), (B) and (C) hold. Since
 $N_{m_0,N}(\widetilde{w}^{\delta_1}(s),s)$ is decreasing  in time, we have
 \begin{equation}
\label{255}
\liminf_{s\rightarrow +\infty}N_{m_0,N}(\widetilde{w}^{\delta_1}(s),s)\le N_{m_0,N}(\widetilde{w}^{\delta_1}(1+s_1),1+s_1)<0,
 \end{equation}
on the one hand.  On the other hand, from  \eqref{basic1}, we have 
\begin{equation}\label{c55}
-\frac{m_0}{s} \int_{B}\widetilde{w}^{\delta_1}\partial_s\widetilde{w}^{\delta_1}\w
{\mathrm{d}}y\ge-\frac{m_0}{s}
\ibint(\partial_s\widetilde{w}^{\delta_1})^2\w {\mathrm{d}}y-\frac{m_0}{s}
\int_{B}(\widetilde{w}^{\delta_1})^2\w {\mathrm{d}}y.
\end{equation}
By \eqref{F0},  \eqref{c55} and  for sufficiently large $ S_2$,  we deduce that 
\begin{equation}
H_{m_0,N}(\widetilde{w}^{\delta_1}(s),s)\ge 
-e^{-\frac{2(p+1)s}{p-1}}s^{\frac{2a}{p-1}}  \ibint  F(\p \widetilde{w}^{\delta_1})\w \y, \quad \forall s\ge \max (s_1+1,S_2). \nonumber
\end{equation}
So, by \eqref{lyap1}, we have
\begin{equation}
N_{m_0,N}(\widetilde{w}^{\delta_1}(s),s)\ge 
-e^{-\frac{2(p+1)s}{p-1}}s^{\frac{2a}{p-1}-b}  \ibint  F(\p \widetilde{w}^{\delta_1})\w \y
. \nonumber
\end{equation}
Due to  \eqref{equiv4}, we infer,
\begin{align}\label{N0}
N_{m_0,N}(\widetilde{w}^{\delta_1}(s),s)\ge
-C e^{-\frac{2(p+1)s}{p-1}}s^{\frac{2a}{p-1}-b}  \ibint  |\p \widetilde{w}^{\delta_1}|^{\bar p+1}  \y
-Ce^{-2s}.
\end{align}
 Notice that,
after a change of variables defined in \eqref{blowupbefore}, we find that
\begin{equation*}
  \ibint  |\p \widetilde{w}^{\delta_1}|^{\bar p+1 } \y
=(1+\delta_1e^{s})^N\p^{\bar p+1}\big(-\log (\delta_1+e^{-s})\big)  \ibint 
|w(z,-\log
(\delta_1+e^{-s}))|^{\bar p+1} {\mathrm{d}}z.\
\end{equation*}
Since we have $-\log (\delta_1+e^{-s})\rightarrow -\log \delta_1$
as $s\rightarrow +\infty$, then
$\p(-\log (\delta_1+e^{-s})\rightarrow \p(-\log (\delta_1))$. Moreover, by exploiting \eqref{subc} and  \eqref{defpb}, we have 
$\bar p<1+\frac4{N-2}$.  Then    $\|w(s)\|_{L^{\bar p+1}(B)}$ is
locally bounded, by a continuity argument, it follows that the
former integral remains bounded and
\begin{align}\label{N0}
N_{m_0,N}(\widetilde{w}^{\delta_1}(s),s)\ge
-C{(\delta_1+e^{-s})^N}e^{-(1+2\alpha)s}s^{\frac{2a}{p-1}-b}
-Ce^{-2s}
\rightarrow 0.
\end{align}
as $s\rightarrow +\infty$. So, it follows that
\begin{equation}\label{d2}
\liminf_{s\rightarrow +\infty}N_{m_0,N}(\widetilde{w}^{\delta_1}(s),s)\ge 0 .
\end{equation}

\noindent
  From \eqref{255}, this is a contradiction. Thus \eqref{posi1} holds.
 This concludes  the  proof of Proposition \ref{proplyap}.
\Box

\subsection{Proof of Theorem \ref{tp1}'}
 We define the following time:
\begin{equation}\label{new19dec1}
 t_0(x_0)=\max(T(x_0)-e^{-S_2},0).
\end{equation}
According to the Proposition \ref{proplyap}, we obtain the following corollary which summarizes the principle properties of $N_{m_0,N}(w(s),s)$ defined in   \eqref{lyap1}.
\begin{cor}\label{19dec3} {\bf (Estimate on $N_{m_0,N}(w(s),s)$).}
  There exists  $t_0(x_0)\in [0, T(x_0))$
such that, for all $T_0\in (t_0(x_0),T(x_0)]$,  for all $s\ge  -\log (T_0-t_0(x_0))$
 and $x\in \er^N$ where $|x-x_0|\le \frac{e^{-s}}{\delta_0(x_0)}$,  
 we have
\begin{equation}\label{2018N0}
0\leq N_{m_0,N}(w(s),s) \leq N_{m_0,N}(w(\tilde{s}_{0}),\tilde{s}_{0}) ,
\end{equation}
\begin{equation}\label{18jan1}
 \int_{s}^{s+1}\int_{B}\Big(|\grad w(y,\tau)|^2(1-|y|^2)+\frac{(\partial_sw(y,\tau))^{2}}{1-|y|^2}+w^2(y,\tau)\Big) \w  {\mathrm{d}}y{\mathrm{d}}\tau\leq C\Big(1 +N_{m_0,N}(w(\tilde{s}_{0}),\tilde{s}_{0} ) \Big)s^{b+1},
\end{equation}
where $w=w_{x,T^*(x)}$ is defined in \eqref{scaling}, with 
 $T^*(x)$  given in \eqref{18dec1}
where $\tilde{s}_{0}=-\log (T^*(x)-t_0(x_0))$ and $b$ is defined in \eqref{defb}.
\end{cor}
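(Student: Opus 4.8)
The plan is to derive the corollary from Proposition \ref{proplyap} by letting the centre of the similarity variables move from $x_0$ to a nearby point $x$. First I would fix $t_0(x_0)=\max(T(x_0)-e^{-S_2},0)$ as in \eqref{new19dec1}, enlarging the universal constant $S_2$ of Proposition \ref{proplyap} if necessary so that it also dominates a threshold depending on $T(x_0)$ and $\delta_0(x_0)$; this is harmless and accounts for the dependence of $t_0(x_0)$, hence of $K_1$, on $T(x_0)$ and $\delta_0(x_0)$. With this choice, any $T_0\in(t_0(x_0),T(x_0)]$ obeys $T_0-t_0(x_0)\le e^{-S_2}$, so every admissible $s\ge-\log(T_0-t_0(x_0))$ already satisfies $s\ge S_2\ge S_1$.

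The heart of the matter is geometric, and I expect it to be the only delicate point. Using the non-characteristic cone condition \eqref{nonchar} and the fact that $\Gamma$ is a $1$-Lipschitz graph, one shows --- exactly as in Merle--Zaag \cite{MZajm03,MZimrn05,MZma05} and in \cite{HZjhde12,HZnonl12,H1,omar1,omar2} --- that for $|x-x_0|\le e^{-s}/\delta_0(x_0)$ with $s$ in the stated range, the backward light cone with vertex $(x,T^*(x))$, $T^*(x)$ as in \eqref{18dec1}, lies inside the cone $\mathcal{C}_{x_0,T(x_0),\delta_0}$ on which $u$ is known to be defined. It follows that $0<T^*(x)\le T(x)$, that $w=w_{x,T^*(x)}$ of \eqref{scaling} is well defined on $B\times[\tilde s_0,+\infty)$ and solves \eqref{A} there, and that the thresholds fit together as $\max(-\log T^*(x),S_1)\le\tilde s_0\le s$, with $\tilde s_0=-\log(T^*(x)-t_0(x_0))$; making these inequalities hold uniformly in $x$ is the part that requires care. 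Granting it, Proposition \ref{proplyap} applies to $w$ on $[\tilde s_0,+\infty)$ verbatim.

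The two conclusions are then immediate. The lower bound in \eqref{2018N0} is \eqref{posi1} evaluated at $s\ge\tilde s_0\ge S_2$. For the upper bound, the differential inequality \eqref{M11} proved for $N_{m_0,N}$ in Proposition \ref{proplyap} (with the choice $\sigma=C_0(m_0+1)e^{-S_1}$ made there) shows that $s\mapsto N_{m_0,N}(w(s),s)$ is non-increasing on $[\tilde s_0,+\infty)$ --- alternatively one iterates \eqref{DEC101} over the intervals $[\tilde s_0+k,\tilde s_0+k+1]$ --- whence $N_{m_0,N}(w(s),s)\le N_{m_0,N}(w(\tilde s_0),\tilde s_0)$. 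Finally, for \eqref{18jan1} I would keep on the right-hand side of \eqref{DEC101} only the three non-negative space--time integrals of $(\partial_{s}w)^2/(1-|y|^2)$, $|\grad w|^2-(y.\grad w)^2$ and $w^2$, discard the non-positive term $-N_{m_0,N}(w(s+1),s+1)$ by \eqref{posi1}, use $s\ge 1$ to replace $1/s^b$ by the smaller $1/s^{b+1}$ in the first coefficient, and use $|\grad w|^2(1-|y|^2)\le|\grad w|^2-|y.\grad w|^2$, which gives
\[
\frac{c}{s^{b+1}}\ia\ibint\Big(|\grad w|^2(1-|y|^2)+\frac{(\partial_{s}w)^2}{1-|y|^2}+w^2\Big)\w\y\t\le N_{m_0,N}(w(\tilde s_0),\tilde s_0)
\]
for some $c=c(p,a,N)>0$ (with $w=w_{x,T^*(x)}$); multiplying by $s^{b+1}/c$ and using $N_{m_0,N}(w(\tilde s_0),\tilde s_0)\ge 0$ gives \eqref{18jan1} with $C=1/c$.
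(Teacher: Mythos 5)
Your proposal is correct and follows exactly the route the paper intends: Corollary \ref{19dec3} is stated there as an immediate consequence of Proposition \ref{proplyap}, obtained by combining the positivity \eqref{posi1} with the monotonicity coming from \eqref{M11}/\eqref{DEC101}, and then rearranging \eqref{DEC101} and dropping the non-positive terms to extract the weighted space--time integrals. Your additional care with the choice of $t_0(x_0)$ and the uniformity in $x$ near $x_0$ only makes explicit details the paper leaves implicit.
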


\medskip

\begin{nb} Using the definition of  (\ref{scaling}) of
$w_{x,T^*(x)}=w$, we write easily
\begin{equation}\label{2018N1}
   N_{m_0,N}(w(\widetilde{s_0}), \widetilde{s_0})
\le \widetilde{K_0},
\end{equation}
where
$\widetilde{K_0}=\widetilde{K_0}(T(x_0)-t_0(x_0),\|(u(t_0(x_0)),\partial_tu(t_0(x_0)))\|_{H^{1}\times L^{2}(B(x_0,\frac{T(x_0)-t_0(x_0)}{\delta_0(x_0)}))})$.
\end{nb}
\medskip

 With    Corollary \ref{19dec3}, we are in a position to  prove  Theorem \ref{tp1}'
 which is a uniform
 version of Theorem \ref{tp1} for $x$ near $x_0$.

%
%
%
%
%
%
%

\medskip

{{\it {Proof of Theorem \ref{tp1}'}}}:
 Note that the estimate on the space-time $L^2$ norm of $\partial_sw$ was already
proved in Corollary  \ref{19dec3} (take $q=b+1$, where   $b$ is defined in \eqref{defb}). Thus we focus on the space-time   $L^2$ norm
of  $\grad w$.  Let us remark that this estimate  already follows from  Corollary  \ref{19dec3} with the ball $B$   replaced by  $B(0,\frac12)$.
Thanks to the covering technique (we refer the reader to Merle
 and Zaag \cite{MZimrn05} (pure power case) and Hamza and Zaag in  Lemma 2.8 in \cite{HZjhde12}),  we easily extend this estimate from $B(0,\frac12 )$ to $B$. This concludes the proof of  Theorem \ref{tp1}'.
\Box

\section{Proof of Theorem \ref{t1} and Theorem \ref{t2}}\label{section3}

In this section, we consider the one space dimensional case ($N=1$). We prove 
Theorem \ref{t1} and Theorem \ref{t2} here. Before doing that,  since we consider the one space dimensional case and  thanks to  Theorem \ref{tp1},   we  first   prove  a  polynomial  estimate.
This section is divided into three parts:
\begin{itemize}
\item   In subsection \ref{3.1},    we prove a  polynomial  estimate.
\item  In subsection \ref{3.2},   we state a general version of Theorem \ref{t1}, uniform for $ x$ near $x_0$ and prove it.
%
\item In subsection \ref{3.3}, we prove   Theorem \ref{t2}.
\end{itemize}
\subsection{Polynomial estimate }\label{3.1}
\begin{prop}\label{prop3.1}
\noindent  Consider $u $   a solution of ({\ref{gen}}) with
blow-up graph $\Gamma:\{x\mapsto T(x)\}$ and  $x_0$  a non
characteristic point. Then,  there exists
$t_{0}(x_{0})\in [ 0,T(x_{0}))$ and  $q_1=q_1(a,p,N)>0$  such that,   for all $T_0 \in (t_{0}(x_0),T(x_{0})]$,  for all $s\geq -\log (T_0-t_{0}(x_{0}))$
and $x\in \er$ where $|x-x_0|\le \frac{e^{-s}}{\delta_0(x_0)}$, 
we have
\begin{equation}\label{co2bis}
\|w(s)\|_{H^1((-1,1))}+\|\partial_sw(s)\|_{L^2((-1,1))}
\leq K_2 s^{q_1},
\end{equation}
where $w=w_{x,T^*(x)}$ is defined in \eqref{scaling},  with $T^*(x)$ given in \eqref{18dec1},
 $K_2$ depends on $ p, a,  \delta_{0}(x_{0})$, 
 $T(x_{0})$, $t_0(x_0)$ and 
$\|(u(t_0(x_0)),\partial_tu(t_0(x_0)))\|_{H^{1}\times
L^{2}(I(x_0,\frac{T(x_0)-t_0(x_0)}{\delta_0(x_0)}) )}$.
\end{prop}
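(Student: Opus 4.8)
The plan is to upgrade the averaged space--time estimate of Theorem~\ref{tp1}$'$ to a bound that is pointwise in $s$, by exploiting the Lyapunov structure built in Section~\ref{section2}. Since $N=1$, the functional $E_N$ of \eqref{F0} becomes $E_1$ of \eqref{5jan1} (the combination $\tfrac12|\grad w|^2-\tfrac12|y.\grad w|^2$ reduces to $\tfrac12(\partial_y w)^2(1-y^2)$), and similarly for $H_{m_0,N}$. I would first collect: (i) from Corollary~\ref{19dec3} and \eqref{2018N1}, the two--sided bound $-Ce^{-s}s^{b}\le H_{m_0,N}(w(s),s)\le \widetilde{K_0}\,s^{b}$, with $b$ as in \eqref{defb}; (ii) the unweighted space--time $L^2$ bound \eqref{feb19} of Theorem~\ref{tp1}$'$; (iii) the weighted space--time bound \eqref{18jan1}; (iv) the Lyapunov inequality \eqref{DEC101}, which, telescoped over unit time intervals and combined with $N_{m_0,N}\ge 0$, also yields $\int_{s}^{s+1}\iint|w|^{p+1}\log^{a}(2+\p^2w^2)\w\y\t\le Cs^{a+b+1}$; and (v) the elementary consequence of (iii), obtained by integrating $\frac{d}{d\tau}\iint w^2\w\y\le \iint w^2\w\y+\iint(\partial_s w)^2\frac{\w}{1-y^2}\y$ from a good time in $(s-1,s)$, that $\iint w^2(s)\w\y\le Cs^{b+1}$ for every $s$.

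Next I would derive a pointwise bound on $E_1(w(s),s)$. By (ii) choose $\tau_*\in(s-1,s)$ with $\iint\big(w^2+(\partial_y w)^2+(\partial_s w)^2\big)(\tau_*)\y\le Cs^{q}$; since the nonlinear term in $E_1$ is nonnegative ($F\ge 0$), already $E_1(w(\tau_*),\tau_*)\le Cs^{q}$. Integrating the $N=1$ case of the differential inequality of Lemma~\ref{lem22} from $\tau_*$ to $s$, the ``bad'' term $\frac{C}{\tau^{a+1}}\iint|w|^{p+1}\log^{a}(2+\p^2w^2)\w\y$ integrates over $(\tau_*,s)\subset(s-1,s)$ to at most $\frac{C}{s^{a+1}}\cdot Cs^{a+b+1}=Cs^{b}$ by (iv), while $\int_{\tau_*}^{s}\Sigma_1(\tau)\t\le Cs^{b-1}+Ce^{-2s}$ by (iii); hence $E_1(w(s),s)\le Cs^{q_1}$ after enlarging the exponent, with $q_1=q_1(a,p,N)$.

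It remains to pass from $E_1(w(s),s)\le Cs^{q_1}$ to the bound on $\|w(s)\|_{H^1((-1,1))}+\|\partial_s w(s)\|_{L^2((-1,1))}$. Writing $E_1(w(s),s)=\iint\big(\tfrac12(\partial_s w)^2+\tfrac12(\partial_y w)^2(1-y^2)+\tfrac{p+1}{(p-1)^2}w^2\big)\w\y-\mathrm{NL}(s)$ with $\mathrm{NL}(s):=e^{-\frac{2(p+1)s}{p-1}}s^{\frac{2a}{p-1}}\iint F(\p w(s))\w\y$, the weighted quadratic part equals $E_1(w(s),s)+\mathrm{NL}(s)$, so everything reduces to a polynomial bound on $\mathrm{NL}(s)$. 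Using $\p w=u$, the asymptotics $F(u)\sim\frac1{p+1}|u|^{p+1}\log^a(2+u^2)$ and \eqref{id1}, one has $\mathrm{NL}(s)\le Cs^{-a}\iint|w(s)|^{p+1}\log^{a}(2+\p^2 w^2)\w\y+Ce^{-s}$, and, splitting according to the size of $w$, $\tfrac1{s^{a}}\log^{a}(2+\p^2 w^2)$ is essentially bounded, so $\mathrm{NL}(s)\le Cs^{C}\iint|w(s)|^{p+1}\w\y+Ce^{-s}$. Here I would invoke the critical Gagliardo--Nirenberg inequality in the one--dimensional weighted space --- of the type used in \cite{MZimrn05} (the exponent $p+1$ is exactly critical for the embedding of the energy space when $N=1$) --- to write $\iint|w|^{p+1}\w\y\le \varepsilon\iint\big(w^2+(1-y^2)(\partial_y w)^2\big)\w\y+C_\varepsilon\big(\iint w^2\w\y\big)^{r}$; taking $\varepsilon=\varepsilon(s)$ a negative power of $s$, absorbing the first term into the quadratic part and using (v), one gets the \emph{weighted} bound $\iint\big((\partial_s w)^2+(\partial_y w)^2(1-y^2)+w^2\big)(s)\w\y\le Cs^{q_1}$. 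Finally one converts this into the unweighted estimate exactly as at the end of the proof of Theorem~\ref{tp1}$'$: since $\w\ge c>0$ on $(-\tfrac12,\tfrac12)$ the weighted bound controls the $H^1\times L^2$ norm on $(-\tfrac12,\tfrac12)$ of every $w_{x',T^*(x')}$ uniformly for $x'$ near $x_0$, and the covering technique of Merle--Zaag \cite{MZimrn05} (Lemma~2.8 of \cite{HZjhde12}), applied slice by slice in $s$, transfers control up to $y=\pm1$; this is the reason for the restriction $|x-x_0|\le e^{-s}/\delta_0(x_0)$ in the statement.

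The main obstacle is the treatment of $\mathrm{NL}(s)$ at a fixed time: it is not dominated by the quadratic part of any of the functionals, and to bound it one genuinely needs the one--dimensional embedding $H^1((-1,1))\hookrightarrow L^\infty((-1,1))$ (equivalently the critical Gagliardo--Nirenberg inequality in the weighted space) together with the logarithmic gain in $f$, which keeps $\tfrac1{s^{a}}\log^{a}(2+\p^2 w^2)$ harmless and lets the $s$-dependent absorption argument close. This is exactly the point at which the method fails in higher dimensions; everything else is bookkeeping of powers of $s$ and routine integration over an interval of unit length.
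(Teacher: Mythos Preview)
Your overall architecture is sound up to and including the pointwise bound $E_1(w(s),s)\le Cs^{q_1}$ (your items (i)--(v) and the integration of Lemma~\ref{lem22} are fine, and the covering step at the end is indeed what the paper does).  The gap is in the pointwise control of the nonlinear term $\mathrm{NL}(s)$.

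First, the reduction $\mathrm{NL}(s)\le Cs^C\iint |w(s)|^{p+1}\w\y$ is not correct when $a>0$: the factor $s^{-a}\log^a(2+\p^2w^2)$ is \emph{not} bounded uniformly in $w$, because you have no a~priori $L^\infty$ control of $w$ at this stage.  A clean inequality (this is what the paper uses, see \eqref{00B2}) is $\log^a(2+\p^2w^2)\le C\log^a(2+\p^2)+C+C|w|$, which only yields $\mathrm{NL}(s)\le C\iint|w(s)|^{p+1}\w\y+\frac{C}{s^a}\iint|w(s)|^{p+2}\w\y+Ce^{-s}$.  So you really need a pointwise-in-$s$ bound on $\iint|w(s)|^{p+2}\y$, not just on $\iint|w(s)|^{p+1}\y$.

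Second, and more seriously, the absorption via Gagliardo--Nirenberg does not close for all $p>1$.  In one space dimension, $\iint|w|^{p+1}\y\le C\|w\|_{L^2}^{(p+3)/2}\|\partial_y w\|_{L^2}^{(p-1)/2}$, and the exponent $(p-1)/2$ on the gradient term is $<2$ only for $p<5$.  For $p\ge 5$ (which is allowed in the statement, since there is no upper restriction on $p$ when $N=1$), Young's inequality cannot produce the form $\varepsilon Q+C_\varepsilon R^r$ you need, and taking $\varepsilon=\varepsilon(s)$ a negative power of $s$ does not help: after substitution you get $Q\le Cs^{q_1}+Cs^C R^{(p+3)/4}Q^{(p-1)/4}$, which for $(p-1)/4\ge 1$ gives no bound on $Q$.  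The same obstruction (with threshold $p<4$) occurs if you try this for $\iint|w|^{p+2}$.

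The paper avoids both issues by a different device: it treats $w$ as a function of two variables $(y,\tau)\in(-1,1)\times(s,s+1)$ and applies the \emph{two--dimensional} Sobolev embedding $H^1\hookrightarrow L^r$ for every $r<\infty$ to \eqref{feb19}, obtaining $\int_s^{s+1}\iint|w|^{2p+2}\y\t\le K s^{q(p+1)}$ for all $p>1$.  Then the mean--value theorem together with the identity $\frac{d}{d\tau}\iint|w|^{p+2}\y\le C\iint|w|^{2p+2}\y+C\iint(\partial_s w)^2\y$ gives directly $\iint|w(y,s)|^{p+2}\y\le K s^{q(p+1)}$ (this is \eqref{s1jan19}).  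With this in hand, $\mathrm{NL}(s)\le K s^{q(p+1)+|a|}$ follows by \eqref{BB818}, and one concludes using the pointwise bound $H_{m_0,1}(w(s),s)\le K s^{b}$ (immediate from the monotonicity of $N_{m_0,1}$), without any absorption argument.  If you replace your Gagliardo--Nirenberg step by this space--time Sobolev argument, your proof goes through for all $p>1$.
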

\begin{nb}
By using  the Sobolev's embedding in one dimension space and the above proposition, we can deduce  that 
\begin{equation}\label{16dec5bis}
\|w(s)\|_{L^{\infty}(-1,1)} \le Ks^{q_1},\quad  \textrm{for all }\ \   s\geq -\log (T^*(x)-t_{0}(x_{0})).
\end{equation}
\end{nb}

\medskip

  {\it{Proof of Proposition \ref{prop3.1}:}}
We proceed in 2 steps:\\
-In step 1, we use   the covering technique  and  the   Sobolev's embedding in two dimensions (space-time)    to conclude a polynomial estimate related
to the $L^{p+2}(-1,1)$ norm  of   $w(s)$.\\
-In step 2, by exploiting the result obtained in  step 1 and the fact that $N_{m_0,1}(w(s),s)$  (defined in \eqref{lyap1})  is a decreasing functional, we easily conclude   the estimate \eqref{co2bis} .

\medskip

{\bf Step 1:} 
By using Theorem\ref{tp1}',
we  get  for all $s\ge  -\log (T^*(x)-t_0(x_0)),$ 
\begin{equation}\label{a1bis}
\int_{s}^{s+1} \!\!\iint\!\!\Big((\partial_s
w(y,\tau))^2+(\partial_y
w(y,\tau))^2
+w^2 (y,\tau)\Big){\mathrm{d}}y{\mathrm{d}}\tau\le
K_1  s^{q}.
\end{equation}

\medskip

Now, we use   the   Sobolev's embedding in two dimensions (space-time)    
 and \eqref{a1bis}   to conclude a polynomial estimate related
to the $L^{p+2}(-1,1)$ norm  of   $w(s)$.
 Indeed,  for all $s\ge  -\log (T^*(x)-t_0(x_0))$, 
 by using the mean value theorem, we derive the existence of $\sigma(s)\in [s,s+1]$ such that
\begin{equation}\label{s1}
\iint\!|w(y,\sigma (s))|^{p+2}{\mathrm{d}}y
=\int_{s}^{s+1}\iint\!|w(y,\tau
)|^{p+2}{\mathrm{d}}y{\mathrm{d}}\tau.
\end{equation}
 Let us  
 write  the identity for all $s\ge  -\log (T^*(x)-t_0(x_0)),$  
\begin{align}\label{2018d1}
\iint\!|w(y,s )|^{p+2}{\mathrm{d}}y=&\iint\!|w(y,\sigma (s))|^{p+2}{\mathrm{d}}y  +
\int_{\sigma (s)}^{s}\frac{d}{d \tau}\iint |w(y,\tau)|^{p+2}{\mathrm{d}}y{\mathrm{d}}\tau.
\end{align}
By combining (\ref{s1}), \eqref{2018d1} and \eqref{basic1}, we infer  for all $s\ge  -\log (T^*(x)-t_0(x_0)),$ 
\begin{align}\label{janv121}
\iint\!|w(y,s )|^{p+2}{\mathrm{d}}y
\le& \int_{s}^{s+1}\iint\!|w(y,\tau
)|^{p+2}{\mathrm{d}}y{\mathrm{d}}\tau
 + C\int_{s}^{s+1}\iint\!|w(y,\tau)|^{2p+2}{\mathrm{d}}y{\mathrm{d}}\tau\no\\
&+C\int_{s}^{s+1}\iint\!(\partial_s
w(y,\tau))^2{\mathrm{d}}y{\mathrm{d}}\tau.
\end{align}
By using Sobolev's inequalities  in two dimension (space time) and \eqref{a1bis}, we conclude  that
\begin{equation}\label{a6}
\int_{s}^{s+1} \!\!\iint\!\!|w (y,\tau)|^{2p+2}\y{\mathrm{d}}\tau\le
C \Big(\int_{s}^{s+1} \!\!\iint\!\!\Big((\partial_s
w)^2+(\partial_yw)^2+w ^{2}\Big){\mathrm{d}}y{\mathrm{d}}\tau\Big)^{p+1}\le
K s^{q(p+1)}.
\end{equation}
Due to  the classical inequality $x^{p+2}\le 1+x^{2p+2},$ for all $x\ge 0$, we have
\begin{align}\label{2018s1}
\int_{s}^{s+1}\iint\!|w(y,\tau
)|^{p+2}{\mathrm{d}}y{\mathrm{d}}\tau
\le&  C
+C\int_{s}^{s+1}\iint\!|w(y,\tau
)|^{2p+2}{\mathrm{d}}y{\mathrm{d}}\tau.
\end{align}
By combining (\ref{janv121}), \eqref{a6}, \eqref{2018s1} and  \eqref{a1bis}, we deduce  for all $s\ge  -\log (T^*(x)-t_0(x_0)),$  that
\begin{equation}\label{s1jan19}
\iint\!|w(y,s )|^{p+2}{\mathrm{d}}y
\le K s^{q(p+1)}.
\end{equation}

\medskip

{\bf Step 2:}  From  (\ref{equiv1}), \eqref{id1}, 
this yields
\begin{equation}\label{BB6}
e^{-\frac{2(p+1)s}{p-1}}s^{\frac{2a}{p-1}}  \iint F(\p w)\w \y \le \frac{C}{s^a}  \iint |w(y,s )|^{p+1}\log^a (2+\p^2w^2)  \y+Ce^{-2s}.
\end{equation}
To estimate the right-hand  side in the inequality \eqref{BB6}, we consider two cases: \\
{\bf Case 1:} the  case where $a\ge 0$.\\
From this inequality $ 2+x^2y^2\le  (2+x^2)(2+y^2),$  for all $x, y \in \er$, and the fact that  $\log^a$ is an increasing function on the interval $[2,\infty)$, we conclude that 
\begin{equation}\label{0B2}
\log^a (2+x^2y^2)\le \Big(\log (2+x^2)+\log (2+y^2)\Big)^a.
\end{equation}
Using the inequality $(X+Y)^a\le C(X^a+Y^a),$  for all $X, Y \in \er_+$ and \eqref{0B2}, we obtain
\begin{equation}\label{B2}
\log^a (2+x^2y^2)\le C\log^a (2+x^2)+C\log^a (2+y^2).
\end{equation}
By combining \eqref{B2} and   the inequality $\log^a (2+z^2)\le C+C|z|$,  for all $z\in \er$, we conclude that
\begin{equation}\label{00B2}
\log^a (2+x^2y^2)\le C\log^a (2+x^2)+C+C|y|.
\end{equation}
Hence, by taking into account   \eqref{00B2} and  \eqref{defphi},   we  deduce that
\begin{align}\label{12jan5}
 \frac{1}{s^a}  \iint |w|^{p+1}\log^a (2+\p^2 w^2)  \y
&\le C+C    \iint |w|^{p+1}  \y
+\frac{C}{s^a}  \iint |w|^{p+2} \y.
\end{align}
Therefore, using \eqref{s1jan19}, \eqref{BB6}, \eqref{12jan5} and   Jensen's inequality,   we get
\begin{equation}\label{BB6bis}
e^{-\frac{2(p+1)s}{p-1}}s^{\frac{2a}{p-1}}  \iint F(\p w)\w \y \le K s^{q(p+1)}.
\end{equation} 
{\bf Case 2:} the  case where $a<0$.\\
 Using \eqref{BB6}, we get 
\begin{equation}\label{BB7}
e^{-\frac{2(p+1)s}{p-1}}s^{\frac{2a}{p-1}}  \iint F(\p w)\w \y \le \frac{C}{s^a}  \iint |w(y,s )|^{p+1}  \y+Ce^{-2s}.
\end{equation}
By  Jensen's inequality and \eqref{s1jan19}, we conclude that 
\begin{equation}\label{BB8}
e^{-\frac{2(p+1)s}{p-1}}s^{\frac{2a}{p-1}}  \iint F(\p w)\w \y \le K s^{q(p+1)-a}.
\end{equation}
Thanks to
  \eqref{BB6bis} and   \eqref{BB8} ,  we deduce  for all $a\in \er$,  $s\ge  -\log (T^*(x)-t_0(x_0)),$
\begin{equation}\label{BB818}
e^{-\frac{2(p+1)s}{p-1}}s^{\frac{2a}{p-1}}  \iint F(\p w)\w \y \le K s^{q(p+1)+|a|}.
\end{equation}

Now, we use    \eqref{2018N0}, 
 \eqref{2018N1},    \eqref{defb}, the fact that $b+1=q$  and   the definition of $N_{m_0,1}(w(s),s)$ defined  in \eqref{lyap1},  to conclude  for all $s\ge  -\log (T^*(x)-t_0(x_0)),$
\begin{equation}\label{lyap1806}
H_{m_0,1}(w(s),s)\le Ks^{b}\le K s^q,
\end{equation}
 where $H_{m_0,1}$ is defined in \eqref{F0}.
Thanks to
  \eqref{BB818} and   the definition of $H_{m_0,1}(w(s),s)$,  we deduce  for all $s\ge  -\log (T^*(x)-t_0(x_0)),$
\begin{equation}\label{2018d5}
\iint \Big((\partial_{s}w)^2+(\partial_y w)^2(1-y^2)+w^2\Big)\w \y \le Ks^{q(p+1)+|a|}.
\end{equation}

%
Note that the estimate \eqref{2018d5} implies  \ref{co2bis} (take $q_1={q(p+1)+|a|}$)  but just  in $(-\frac12, \frac12)$. 
By using   the covering technique, we extend this estimate from  $(-\frac12, \frac12)$ to $(-1, 1)$,  we refer the reader to Merle
 and Zaag \cite{MZimrn05} (unperturbed case) and Hamza and Zaag 
\cite{HZjhde12}  (perturbed case).
This concludes  the Proposition \ref{prop3.1}. \Box

\subsection{A Lyapunov functional}\label{3.2}
In this subsection, our aim is to construct a Lyapunov functional for equation \eqref{A}.
 Note that this functional is far from being trivial and makes our main contribution.
More precisely, thanks to the  rough estimate obtained in  the Proposition \ref{prop3.1}, 
we derive here that the functional  $L(w(s),s)$ defined in \eqref{10dec2} is a decreasing 
  functional  of time  for equation (\ref{A}),  provided that is   $s$ large enough. \\

Let us remark that in   Section \ref{section2}, we construct a Lyapunov functional $N_{m_0,1}(w(s),s)$ defined in \eqref{lyap1}, but we obtain just a rough estimate because the multiplier is not bounded. Nevertheless, the multiplier  related to  the functional  $L(w(s),s)$
 is nonnegative and bounded. Then,  as we said above,   the natural energy
  $E_{1}(w(s),s)$ defined in \eqref{5jan1} is  a small perturbation of  $L(w(s),s)$.

\medskip
Consider $u$ a solution of \eqref{gen}  with blow-up graph  $\Gamma:\{x\mapsto T(x)\}$ and  $x_0$  a 
non characteristic point.
Let   $T_0\in (t_0(x_0), T(x_0)]$.  For all 
$x\in \er$  such that $|x-x_0|\le \frac{T_0-t_0(x_0)}{\delta_0(x_0)}$,  we write $w$ instead of $w_{x,T^*(x)}$ defined in (\ref{scaling}) with $T^*(x)$ given by 
 \eqref{18dec1}.
Thanks to estimate \eqref{co2bis},  we can improve estimate \eqref{E011} related to the 
 control of  the time derivative of the  functional $E_{1}(w(s),s)$. More precisely, we prove  the following lemma:
\begin{lem}\label{2018lem31}  For all   
  $s \geq -\log (T^*(x)-t_0(x_0))$, we have  
\begin{align}\label{E01}
\frac{d}{ds}E_{1}(w(s),s)\le &-  \frac{3}{p-1}\iint (\partial_{s}w)^2\frac{\w}{1-y^2}\y\\
&+ 
 \frac{K\log s}{s^{a+2}}\iint |w|^{p+1}\log^a(2+\p^2w^2)\w \y\no\\
&+ \frac{C}{s^2}\iint  (\partial_y w)^2(1-y^2)\w \y+\frac{C}{s^2}\iint  w^2\w \y  +C e^{-s}.
 \end{align}
\end{lem}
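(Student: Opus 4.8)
The plan is to revisit the proof of Lemma \ref{lem22} and sharpen the one nonlinear term that there was merely absorbed by $\frac{C}{s^{a+1}}\ibint|w|^{p+1}\log^a(2+\p^2w^2)\w\,\y$, using the polynomial (indeed $L^\infty$) bound \eqref{16dec5bis} now available in one space dimension. The starting point is the exact identity \eqref{E00} obtained by multiplying \eqref{A} by $\partial_sw\,\w$ and integrating over $B=(-1,1)$; the terms $\Sigma_1^3(s)$ and $\Sigma_1^4(s)$ are treated exactly as before and yield the $\frac{C}{s^2}$ contributions with $(\partial_yw)^2(1-y^2)$ and $w^2$, plus the harmless $Ce^{-s}$, and also contribute a piece $\frac{1}{p-1}\iint(\partial_sw)^2\frac{\w}{1-y^2}\y$ which combines with the $-2\alpha$ coefficient ($\alpha=\frac{1}{p-1}$ when $N=1$, so $2\alpha=\frac{2}{p-1}$) to leave exactly $-\frac{3}{p-1}\iint(\partial_sw)^2\frac{\w}{1-y^2}\y$ after we also absorb the small $O(1/\sqrt s)$ losses. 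So the whole issue is $\Sigma_1^1(s)+\Sigma_1^2(s)$, i.e. the quantities involving $F(\p w)-\frac{\p wf(\p w)}{p+1}$ and $F(\p w)-\frac{\p wf(\p w)}{2}$.

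The key step is to improve inequality \eqref{id2}. Instead of $F(u)-\frac{uf(u)}{p+1}\le C+\frac{C}{s}\,u f(u)$ (which after \eqref{id1} costs a full power $\frac1{s^{a+1}}\ibint|w|^{p+1}\log^a(\cdots)$), I would use the refined asymptotic $uf(u)-(p+1)F(u)\sim\frac{2a}{p+1}|u|^{p+1}\log^{a-1}(2+u^2)$ mentioned in the introduction, which gives the pointwise bound
\[
\Big|F(\p w)-\frac{\p wf(\p w)}{p+1}\Big|\le C+\frac{C}{\log(2+\p^2w^2)}\,\p w f(\p w).
\]
Now here is where the one-dimensional bound enters: by \eqref{defphi}, $\p(s)=e^{\frac{2s}{p-1}}s^{-\frac a{p-1}}$, so $\log(2+\p^2w^2)\ge\log\p^2\ge \frac{4s}{p-1}-C\log s$ provided $|w|$ is not too small, while on the region where $|w|$ is small the term $\p wf(\p w)$ contributes only something like $Ce^{-2s}$ (since $f$ is controlled near $0$ and $\p^{-\frac{2(p+1)}{p-1}}s^{\frac{2a}{p-1}}$ carries the decay after multiplication by the prefactor in $\Sigma_1^1$). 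More robustly, split $\{|w|\le 1\}$ and $\{|w|>1\}$: on the first set use $e^{-\frac{2(p+1)s}{p-1}}s^{\frac{2a}{p-1}}\p wf(\p w)\le Ce^{-2s}$ directly; on the second set, since $|w(s)|\le Ks^{q_1}$ by \eqref{16dec5bis}, one has $\log(2+\p^2w^2)\ge c s$ for $s$ large, hence $\frac{1}{\log(2+\p^2w^2)}\le \frac{C}{s}$ — wait, that only recovers the old bound. To actually gain the $\log s$ improvement as stated, I would instead keep $\frac1{\log(2+\p^2w^2)}$ intact and observe that combining it with the $\frac{1}{s^a}$ from the prefactor and with \eqref{id1} produces
\[
e^{-\frac{2(p+1)s}{p-1}}s^{\frac{2a}{p-1}}\frac{\p wf(\p w)}{\log(2+\p^2w^2)}=\frac1{s^a}\,\frac{|w|^{p+1}\log^a(2+\p^2w^2)}{\log(2+\p^2w^2)}=\frac{|w|^{p+1}\log^{a-1}(2+\p^2w^2)}{s^a},
\]
and then bounding $\log^{a-1}(2+\p^2w^2)\le C\log^a(2+\p^2w^2)\cdot\frac{1}{\log(2+\p^2w^2)}\le \frac{K\log s}{s}\log^a(2+\p^2w^2)$, where the last step uses $\log(2+\p^2w^2)\ge \frac{2s}{p}$ on $\{|w|\ge 1\}$ together with $\log^a(2+\p^2w^2)\le (\log(\cdots))^{a}$ and $|w|\le Ks^{q_1}$ to pin $\log(2+\p^2w^2)$ between $cs$ and $Cs$ so that $\log^{a-1}$ versus $\log^a$ differ by a factor comparable to $\frac{1}{s}$ up to logarithmic-in-$s$ corrections — hence the $\frac{K\log s}{s^{a+2}}$ coefficient. (The factor $\log s$ absorbs the mismatch between $\log\p=\frac{2s}{p-1}-\frac{a}{p-1}\log s$ and $s$ itself.)

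Putting it together: $\Sigma_1^1(s)+\Sigma_1^2(s)\le \frac{K\log s}{s^{a+2}}\iint|w|^{p+1}\log^a(2+\p^2w^2)\w\,\y+Ce^{-s}$; the terms $\Sigma_1^3(s)+\Sigma_1^4(s)$ are bounded as in \eqref{sigma13} by $\frac1{p-1}\iint(\partial_sw)^2\frac{\w}{1-y^2}\y+\frac{C}{s^2}\iint\big((\partial_yw)^2(1-y^2)+w^2\big)\w\,\y$; and the structural term $-2\alpha\iint(\partial_sw)^2\frac{\w}{1-y^2}\y=-\frac{2}{p-1}\iint(\partial_sw)^2\frac{\w}{1-y^2}\y$ from \eqref{E00} combines to give the stated $-\frac{3}{p-1}$. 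Summing these yields \eqref{E01}. I expect the main obstacle to be making the ``gain of $\log s$ over a full power of $s$'' rigorous near the set where $|w|$ is of moderate size: one must carefully track, via the refined second-order asymptotics of $f$ and $F$ (encoded in the lemmas \eqref{equiv1}, \eqref{equiv2}, \eqref{equiv3}, \eqref{defF123} referenced in \cite{DVZ}-style computations), that $uf(u)-(p+1)F(u)$ really behaves like $|u|^{p+1}\log^{a-1}$ and not just $O(|u|^{p+1}\log^{a-1})$ plus lower order, and that the $\log^{-1}(2+\p^2w^2)$ factor is genuinely comparable to $\frac{\log s}{s}$ once $\p$ and the $L^\infty$ bound on $w$ are inserted — the delicate point being uniformity of all constants for $x$ near $x_0$ and the handling of the transition region $|w|\approx 1$.
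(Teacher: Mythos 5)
Your framing is right in outline---the new inputs are the $L^\infty$ bound \eqref{16dec5bis} and the second--order asymptotics of $F$, and the terms $\Sigma_1^3+\Sigma_1^4$ are handled exactly as in Lemma \ref{lem22}---but the central estimate has a genuine gap that you half-notice yourself. Bounding $\big|F(\phi w)-\frac{\phi w f(\phi w)}{p+1}\big|\le C+\frac{C}{\log(2+\phi^2w^2)}\,\phi w f(\phi w)$ and then using $\log(2+\phi^2w^2)\ge cs$ yields only $\frac{C}{s^{a+1}}\int_{-1}^{1}|w|^{p+1}\log^a(2+\phi^2w^2)\rho\,dy$, i.e.\ exactly the bound of Lemma \ref{lem22}, with no gain. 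Your attempted rescue is circular: writing $\log^{a-1}(2+\phi^2w^2)=\log^{a}(2+\phi^2w^2)\cdot\frac{1}{\log(2+\phi^2w^2)}$ and then $\frac{1}{\log(2+\phi^2w^2)}\le \frac{K\log s}{s}$ buys a single factor of order $\frac1s$ on top of $\frac{1}{s^{a}}$, hence $\frac{1}{s^{a+1}}$ again --- not $\frac{\log s}{s^{a+2}}$, which is smaller by a further factor $\frac{\log s}{s}$. No estimate of $\Sigma_1^1$ and $\Sigma_1^2$ \emph{separately in absolute value} can do better, because each of them is genuinely of size $\frac{|a|}{(p+1)s^{a+1}}\int_{-1}^1|w|^{p+1}\log^a(2+\phi^2w^2)\rho\,dy$ at leading order.

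The missing idea is a cancellation between $\Sigma_1^1$ and $\Sigma_1^2$. Since $F(u)-\frac{uf(u)}{p+1}\sim-\frac{2a}{(p+1)^2}|u|^{p+1}\log^{a-1}(2+u^2)$ while $F(u)-\frac{uf(u)}{2}\sim-\frac{p-1}{2(p+1)}uf(u)$, the leading parts of $\Sigma_1^1$ and $\Sigma_1^2$ are, after \eqref{id1}, respectively $-\frac{a}{(p+1)s^{a+1}}\int|w|^{p+1}\log^a(2+\phi^2w^2)\cdot\frac{4s}{(p-1)\log(2+\phi^2w^2)}\rho\,dy$ and $+\frac{a}{(p+1)s^{a+1}}\int|w|^{p+1}\log^a(2+\phi^2w^2)\rho\,dy$; their sum carries the factor $\big(1-\frac{4s}{(p-1)\log(2+\phi^2w^2)}\big)$. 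This is exactly the paper's decomposition $\Sigma_1^1+\Sigma_1^2=\chi_1+\chi_2$ in \eqref{2018id2}--\eqref{2018id8}: on $A_2(s)=\{\phi w^2\ge 1\}$ the bracket equals $\frac{1}{\log(2+\phi^2w^2)}\big(\log(2+\phi^2w^2)-\frac{4s}{p-1}\big)$, and it is only \emph{here} that \eqref{16dec5bis} enters, pinning $\log(2+\phi^2w^2)-\frac{4s}{p-1}$ to $O(\log s)$ (because $\log\phi^2=\frac{4s}{p-1}-\frac{2a}{p-1}\log s$ and $|w|\le Ks^{q_1}$) while the denominator is $\ge cs$; the remainder $\chi_2$, built from $F_1,F_2$ of \eqref{defF2}--\eqref{defF123}, is of order $\frac{C}{s^{a+2}}\int|w|^{p+1}\log^a(2+\phi^2w^2)\rho\,dy$ by \eqref{15dec1}. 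Without isolating this cancellation your bound stalls at $\frac{C}{s^{a+1}}$, which cannot be absorbed by the good term $-\frac{p-1}{2(p+1)s^{a+3/2}}\int|w|^{p+1}\log^a(2+\phi^2w^2)\rho\,dy$ coming from $\frac{1}{\sqrt s}\frac{d}{ds}J_1$ in the proof of Theorem \ref{t1}', so the Lyapunov property would fail. A secondary slip: for $N=1$, $\alpha=\frac{2}{p-1}$ (not $\frac{1}{p-1}$), so the dissipative coefficient in \eqref{E00} is $-\frac{4}{p-1}$ and $-\frac{4}{p-1}+\frac{1}{p-1}=-\frac{3}{p-1}$; with your value of $\alpha$ you would only reach $-\frac{1}{p-1}$.
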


{\it Proof}:  Since we  consider the one space dimension and  by using the  additional information obtained in Subsection \ref{3.1}, we are going to refine the estimate related to $\Sigma_{1}^{1}(s)$ and $\Sigma_{1}^{2}(s)$ defined in \eqref{E00}. Let us mention that the estimate  
\eqref{sigma13} related to $\Sigma_{1}^{3}(s)+\Sigma_{1}^{4}(s)$ defined in \eqref{E00} is acceptable and does not need any improvement.
 More precisely,  we write 
\begin{align}
\Sigma_{1}^{1}(s)+\Sigma_{1}^{2}(s)=&\frac{2p+2}{p-1}
e^{-\frac{2(p+1)s}{p-1}}s^{\frac{2a}{p-1}}\iint\big( F(\p w)-\frac{\ps wf(\p w)}{p+1}\big)\w \y\no\\
&-\frac{2a}{p-1}e^{-\frac{2(p+1)s}{p-1}}s^{\frac{2a}{p-1}-1}\iint \big( F(\p w)-\frac{\p wf(\p w)}{2}\big)\w \y\no.
 \end{align}
We  attempt to group  the main terms together. 
A straightforward computations implies  that
\begin{equation}\label{2018id2}
\Sigma_{1}^{1}(s)+\Sigma_{1}^{2}(s)=\chi_1(s)+\chi_2(s),
\end{equation}
where
\begin{align}
\chi_{1}(s)=&
\frac{a}{(p+1)s^{a+1}}\iint  {|  w|^{p+1}}\log^{{a}}(2+\p^2 w^2  )\Big(1 -\frac{4s}{(p-1)\log (2+\p^2w^2 )}\Big)\w \y,
\label{2018id5}\\
\chi_{2}(s)=&\frac{2e^{-\frac{2(p+1)s}{p-1}}}{p-1}s^{\frac{2a}{p-1}}\iint \Big((p+1) F_2(\p w)-\frac{a}{s} F_1(\p w)-\frac{a}{s}F_2(\p w)\Big)\w \y,\label{2018id8}
\end{align}
$F_1$ and $F_2$ are defined by
\begin{equation}
F_1(x)= -\frac{ 2a} {(p+1)^2}{| x|^{p+1}}\log^{{a-1}}(2+x^2  ),\label{defF2}
\end{equation}
and 
\begin{equation}\label{defF123}
F_2(x)=F(x)-\frac{xf(x)}{p+1}-F_1(x).
\end{equation}
\medskip

We would like now to find  an estimate for the term $\chi_{1}(s)$. For this,  for all  $s\geq  -\log (T^*(x)-t_0(x_0))$,
we divide $(-1,1)$ into two parts
 \begin{equation}\label{27nov1}
A_{1}(s)=\{y \in (-1,1)\,\,|\,\, \ps w^2(y,s)\leq  1\}\,\,{\rm and }\,\,A_{2}(s)=\{y \in (-1,1)
\,\,|\,\, \ps w^2(y,s)\ge  1\}.
\end{equation}
Accordingly, we write  $\chi_1(s)=\chi_1^1(s)+\chi_1^2(s)$, where
\begin{align}
\chi_1^{1}(s)=&
\frac{a}{(p+1)s^{a+1}}\int_{A_1(s)}   {|  w|^{p+1}}\log^{{a}}(2+\p^2 w^2  )\Big(1 -\frac{4s}{(p-1)\log (2+\p^2 w^2 )}\Big)\w \y,\label{130}\\
\chi_1^{2}(s)=&
\frac{a}{(p+1)s^{a+1}}\int_{A_2(s)}   {|  w|^{p+1}}\log^{{a}}(2+\p^2 w^2  )\Big(1 -\frac{4s}{(p-1)\log (2+\p^2 w^2 )}\Big)\w \y.\label{131}
\end{align}
Note that, by using 
 the definition of the set $A_1(s)$ given  in \eqref{27nov1},  we get, for all $s\geq  -\log (T^*(x)-t_0(x_0)),$
\begin{equation}\label{16dec1}
 | w|^{p+1}\log^{{a}}(2+\p^2 w^2  )\le C \p^{-\frac{p+1}2}(s)\log^a(2+\ps) \le Ce^{-s}.
\end{equation}
From \eqref{16dec1}  and  the fact that 
\begin{equation}\label{16dec2}
 1-\frac{4s}{(p-1)\log (2+\p^2 w^2)}
 \le C,
\end{equation}
 we get
\begin{equation}\label{93}
\chi^1_1(s) \le Ce^{-s}.
\end{equation}
Next, by using  the definition of the set $A_2(s)$ defined in \eqref{27nov1}, we write  for all $s\geq  -\log (T^*(x)-t_0(x_0)),$
\begin{equation}\label{16dec10}
 1-\frac{4s}{(p-1)\log (2+\p^2 w^2)}=\frac{1}{\log (2+\p^2 w^2)}
\Big({\log (2+\p^2 w^2)}
 -\frac{4s}{p-1}\Big).
\end{equation}
Here,  the estimate proved in Subsection \ref{3.1} is crucial to conclude. More precisely,  
by exploiting the expression of $\p$ given in \eqref{psi} and  the  estimate  \eqref{16dec5bis}, we conclude that 
\begin{equation} \label{16dec11}
 {\log (2+\p^2 w^2)}
 -\frac{4s}{p-1}\le K \log s.
\end{equation}
Also, by using the definition of the set $A_2(s)$ defined in \eqref{27nov1}, we can write 
 for all $ s \geq  -\log (T^*(x)-t_0(x_0)),$   if $y\in A_{2}(s)$, we have 
\begin{equation}\label{16dec12}
\log(2+\p^2w^2)\ge \log(\ps)\ge \frac{2s}{p-1}-\frac{a \log s}{p-1}.
\end{equation}
By using  \eqref{16dec10},  \eqref{16dec11} and  \eqref{16dec12} we have   for all $ s \geq  -\log (T^*(x)-t_0(x_0)),$
\begin{equation}\label{16dec13}
 1-\frac{4s}{(p-1)\log (2+\p^2 w^2)}\le K\frac{\log s}{s}.
\end{equation}
Adding  \eqref{16dec13} and \eqref{131}, we have 
\begin{equation}\label{16dec14}
\chi_1^{2}(s)\le 
\frac{K\log s}{s^{a+2}}\iint   {|  w|^{p+1}}\log^{{a}}(2+\p^2 w^2 )\w \y.
\end{equation}
Note that, by using  the fact $\chi_1(s)=\chi_1^{1}(s)+\chi_1^{2}(s)$, \eqref{93} and \eqref{16dec14}, we get
\begin{equation}\label{13janva1}
\chi_1(s)\le 
\frac{K\log s}{s^{a+2}}\iint   {|  w|^{p+1}}\log^{{a}}(2+\p^2 w^2 )\w \y+Ce^{-s}.
\end{equation}
Finally, it remains only to control the term $\chi_2(s)$.
Note from    \eqref{equiv2}  and  \eqref{equiv3}  that
\begin{equation}\label{15dec1}
\frac{1}{s}| F_1(\p w)|+| F_2(\p w)|\le   C+C \frac{ \p w}{s^2}f(\p w).
\end{equation}
By \eqref{2018id8}, \eqref{15dec1} and \eqref{id1},  we have, for all $s\ge  -\log (T^*(x)-t_0(x_0))$,
\begin{equation}\label{sigma11dec18}
\chi_2(s)\le 
\frac{C}{s^{a+2}}\iint |w|^{p+1}\log^a(2+\p^2w^2)\w \y+  C e^{-2s}.
\end{equation}
The result \eqref{E01} derives immediately from  \eqref{E00}, \eqref{sigma13},  \eqref{13janva1}, \eqref{sigma11dec18},  and  the identity \eqref{2018id2},
which ends the proof of Lemma \ref{2018lem31}
\Box

%
%
%
%
%

\medskip

With Lemmas \ref{LemJ_0} and  \ref{2018lem31},  we are in a position to state and prove
Theorem \ref{t1}', which is a uniform version of Theorem \ref{t1} for $x$ near $x_0$.


\bigskip

\noindent {\bf{Theorem} \ref{t1}'} {\it (Existence  of a Lyapunov functional for equation }
{\eqref{A}})\\
\label{t1bis}
{\it
Consider   $u $    a solution of ({\ref{gen}}) with blow-up graph
$\Gamma:\{x\mapsto T(x)\}$ and  $x_0$  a non characteristic point.
Then there exists $t_1(x_0)\in [0,T(x_0)) $ such that, 
 for all $T_0\in  (t_1(x_0),T(x_0)]$,  for all  $s\ge  -\log(T_0-t_1(x_0))$  and $x\in \er$, where $|x-x_0|\le \frac{e^{-s}}{\delta_0(x_0)}$,
 we have
\begin{equation}\label{t1lyap}
 L(w(s+1),s+1)-L(w(s),s) \leq -\frac{2}{p-1} \int_{s}^{s+1}\iint (\partial_{s}w)^2\frac{\w}{1-y^2}\y \t,
\end{equation}
where  $w=w_{x,T^*(x)}$ and $T^*(x)$  is defined in \eqref{18dec1}.
}

\medskip

  {\it{Proof of Theorem \ref{t1}':}}
By exploiting the defintion of $L_0(w(s),s)$ in \eqref{5jan1},
we  can write easily
\begin{equation}\label{14jan1}
\frac{d}{ds}L_0(w(s),s)=\frac{d}{ds}E_1(w(s),s) + \frac{1}{\sqrt{s}}\frac{d}{ds}J_1(w(s),s)-  \frac{1}{2s\sqrt{s}}J_1(w(s),s),
\end{equation}
where $J_1(w(s),s)=\frac1{s}\iint   w\partial_{s}w\w \y$.
Lemmas \ref{LemJ_0} and  \ref{2018lem31} and 
the following inequality
$$
\frac{1}{2s^2\sqrt{s}}\iint   w\partial_{s}w\w \y+\frac{p+3}{2s^3}\iint   w\partial_{s}w\w \y\le 
\frac{C}{s^2}\iint   (\partial_{s}w)^2\w \y+\frac{C}{s^2}\iint   w^2\w \y,$$
 allows to prove that 
 for all $s \geq -\log(T^*(x)-t_0(x_0))$, we have 
\begin{align*}
\frac{d}{ds}L_0(w(s),s)\le &-  (\frac{3}{p-1}- \frac{C}{s})\iint (\partial_{s}w)^2\frac{\w}{1-y^2}\y+ \frac{p+3}{2s\sqrt{s}}L_0(w(s),s)\no\\
&- \frac{1}{s\sqrt{s}}( \frac{p+1}{2(p-1)} -\frac{C}{\sqrt{s}})\iint  w^2\w \y  \\
&-\frac{1}{s\sqrt{s}}(\frac{p+7}{4}-\frac{C}{\sqrt{s}})\iint   (\partial_{s}w)^2\w \y\\
&-\frac1{s\sqrt{s}}(\frac{p-1}{4}-\frac{C}{\sqrt{s}})\iint (\partial_y w)^2(1-y^2)\w\y
\no\\
&-\frac1{s^{a+\frac32}}(\frac{p-1}{2(p+1)}-  \frac{K\log s}{\sqrt{s}}-\frac{C}{s})\iint |w|^{p+1}\log^a(2+\p^2w^2)  
\w\y\\
&+ C \frac{e^{-2s}}{\sqrt{s}}+C e^{-s}.
  \end{align*}
Again,  choosing  $S_3> -\log(T(x_0)-t_0(x_0))$ large enough,  this  implies that  for all $s \geq \max  (-\log(T^*(x)-t_0(x_0)), S_{3})$, we have 
\begin{equation}\label{17dec1}
\frac{d}{ds}L_0(w(s),s)\le -\frac{2}{p-1}\iint (\partial_{s}w)^2\frac{\w}{1-y^2}\y+ \frac{p+3}{2s\sqrt{s}}L_0(w(s),s)+  
C e^{-s}.
\end{equation}
Recalling that,
$$L(w(s),s)=\exp\Big(\frac{p+3}{\sqrt{s}}\Big) L_0(w(s),s)+\theta e^{-{s}},$$  
   we get from straightforward computations 
  \begin{equation}\label{17dec2}
  \frac{d}{ds}L(w(s),s) =-\frac{p+3}{2s\sqrt{s}}\exp\Big(\frac{p+3}{\sqrt{s}}\Big) L_0(w(s),s)+\exp\Big(\frac{p+3}{\sqrt{s}}\Big)\frac{d}{ds}L_0(w(s),s)-{\theta} e^{-{s}}.
\end{equation}
Therefore, estimates $\eqref{17dec1}$ and $\eqref{17dec2}$ lead to the following crucial estimate:
 \begin{equation}
 \frac{d}{ds}L(w(s),s)\leq -\frac2{p-1} \exp\Big(\frac{p+3}{\sqrt{s}}\Big) \iint (\partial_{s}w)^2 \frac{\w}{1-y^2}\y
+\Big(C\exp\Big(\frac{p+3}{\sqrt{s}}\Big)-\theta \Big)e^{-s}.
\end{equation}
Since we have $1\leq \exp\Big(\frac{p+3}{\sqrt{s}}\Big)\leq \exp\Big(\frac{p+3}{\sqrt{S_3}}\Big)$, 
we then choose $\theta$ large enough, so that $C-\theta \leq 0$, which yields,  for all $s \geq \max  (-\log(T^*(x)-t_0(x_0)), S_{3})$,
$$\frac{d}{ds}L(w(s),s)\leq -\frac2{p-1} \iint (\partial_{s}w)^2 \frac{\w}{1-y^2}\y.$$
A simple integration between $s$ and $s+1$ ensures the result
\eqref{t1lyap}, where 
\begin{equation}\label{new19dec1}
 t_1(x_0)=\max(T(x_0)-e^{-S_3},t_0(x_0)).
\end{equation}
This concludes the proof
of  Theorem \ref{t1}'.
\Box

\medskip

 We now claim the following lemma:
\begin{lem}\label{L19}
 There exists $S_4\ge S_3$ such that, if
  $L(w(s_3),s_3)<0$ for some $s_3\ge \max(S_4,-\log (T^*(x)-t_1(x_0)))$, then $w$
blows up in some finite time $s_4>s_3$.
\end{lem}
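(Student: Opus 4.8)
The plan is to argue by contradiction using the Lyapunov structure from Theorem \ref{t1}', exactly as in the analogous blow-up criteria in \cite{HZjhde12, HZnonl12, H1, omar1, omar2}. Suppose $w=w_{x,T^*(x)}$ is defined globally on $B\times[s_3,+\infty)$ and that $L(w(s_3),s_3)<0$ for some $s_3$ large. Since, by Theorem \ref{t1}', $L(w(s),s)$ is nonincreasing for $s$ large, we have $L(w(s),s)\le L(w(s_3),s_3)<0$ for all $s\ge s_3$. First I would use the definition \eqref{5jan1}, \eqref{10dec2} of $L$ together with the basic inequality \eqref{basic1} (to absorb the cross term $\frac1{s\sqrt s}\iint\partial_s w\,w\,\w\,\y$) and the fact that $\theta e^{-s}>0$, to get a lower bound of the form
\begin{equation*}
L(w(s),s)\ge c\iint\Big((\partial_s w)^2+(\partial_y w)^2(1-y^2)+w^2\Big)\w\,\y-e^{-\frac{2(p+1)s}{p-1}}s^{\frac{2a}{p-1}}\iint F(\p w)\w\,\y-Ce^{-s}
\end{equation*}
for $s\ge S_4$ with $S_4\ge S_3$ chosen large enough. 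Hence the negativity of $L$ forces the nonlinear term $e^{-\frac{2(p+1)s}{p-1}}s^{\frac{2a}{p-1}}\iint F(\p w)\w\,\y$ to dominate the quadratic norm of $(w,\partial_s w)$; in particular this norm cannot stay small, and more importantly one extracts a lower bound on $\iint |w|^{p+1}\log^a(2+\p^2 w^2)\w\,\y$ (using \eqref{equiv1}--\eqref{equiv4} to compare $F$ with the pure power) in terms of a suitable power of the energy.

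The second step is to run the standard ODE-blow-up argument on the real-valued function $s\mapsto L(w(s),s)$ (or on $\iint w^2\w\,\y$). The mechanism is: from \eqref{t1lyap} the dissipation controls $\int_s^{s+1}\iint(\partial_s w)^2\frac{\w}{1-y^2}\y\,\t$ by $L(w(s),s)-L(w(s+1),s+1)$, and combining the negativity of $L$ with the coercivity estimate above and a Cauchy--Schwarz/convexity computation on $\frac{d}{ds}\iint w^2\w\,\y$ one derives a differential inequality of the type $\frac{d^2}{ds^2}\big(\iint w^2\w\,\y\big)\ge c\big(\iint w^2\w\,\y\big)^{\frac{p+1}{2}}$ (up to harmless correction factors $s^{-\kappa}$ and $e^{-s}$), together with the fact that the first derivative is eventually positive. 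A standard Kaplan-type/Osgood argument then shows that $\iint w^2\w\,\y$ blows up at some finite $s_4>s_3$, contradicting global existence. This is the same scheme as in Lemma 2.6 of \cite{HZjhde12} and its analogues; the logarithmic weight $\log^a(2+\p^2 w^2)$ only improves the lower bound when $a\ge 0$ and is controlled by \eqref{16dec5bis} when $a<0$, so it does not affect the mechanism.

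The main obstacle I expect is bookkeeping the time-dependent weights $e^{-\frac{2(p+1)s}{p-1}}s^{\frac{2a}{p-1}}$ and the factor $\exp((p+3)/\sqrt s)$ in \eqref{10dec2}: one must check that these slowly varying factors do not spoil the super-linear differential inequality, which is why $S_4$ must be taken large (so that $s^{-\kappa}$ and $e^{-s}$ terms are negligible against the leading $(\cdot)^{(p+1)/2}$ term). A secondary technical point is that the polynomial bound from Proposition \ref{prop3.1} is only available \emph{as long as} $w$ exists, so it must be invoked on $[s_3,s_4)$ rather than globally; this is enough, since the argument produces the blow-up time $s_4$ as a finite quantity from data at $s_3$. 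Modulo these adjustments, the proof is a direct adaptation of the cited works, so I would state it briefly and refer to \cite{HZjhde12, HZnonl12, H1, omar1, omar2} for the remaining routine details.
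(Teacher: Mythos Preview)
Your proposal takes a genuinely different route from the paper and, more importantly, from the very references you invoke. The paper's one-line proof refers back to the second half of Proposition~\ref{proplyap}, and the argument there (which is also the one used in \cite{HZjhde12,HZnonl12,H1,omar1,omar2}) is \emph{not} a Levine/Kaplan concavity argument on $\iint w^2\w\,\y$. It is a time-shift argument: assuming $L(w(s_3),s_3)<0$, one passes to the shifted solution $\widetilde w^{\delta}=w_{x,\,T^*(x)-\delta}$ via the explicit formula \eqref{blowupbefore}. This shifted solution is global and uniformly bounded, because it corresponds to the original $u$ in a cone whose vertex lies strictly below the blow-up surface; hence $\|w(\cdot,-\log(\delta+e^{-s}))\|_{L^{\bar p+1}(B)}$ stays bounded and the nonlinear term in $L(\widetilde w^{\delta}(s),s)$ is killed by the prefactor $e^{-\frac{2(p+1)s}{p-1}}s^{\frac{2a}{p-1}}$. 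One then checks $\liminf_{s\to\infty}L(\widetilde w^{\delta}(s),s)\ge 0$, which together with the monotonicity from Theorem~\ref{t1}' contradicts $L(\widetilde w^{\delta}(s_3+1),s_3+1)<0$ (the latter holding for $\delta$ small by continuity).

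Your scheme is considerably heavier and the key step is not substantiated. Deriving a clean super-linear differential inequality of the form $\frac{d^2}{ds^2}\big(\iint w^2\w\,\y\big)\ge c\big(\iint w^2\w\,\y\big)^{(p+1)/2}$ from \eqref{A} is far from routine: the equation carries first-order damping $\big(\frac{p+3}{p-1}-\frac{2a}{(p-1)s}\big)\partial_s w$, the transport term $2y\cdot\nabla\partial_s w$, boundary contributions through $\frac{\w}{1-|y|^2}$, and a time-dependent nonlinearity $e^{-\frac{2ps}{p-1}}s^{\frac{a}{p-1}}f(\phi(s)w)$. You would also be using Proposition~\ref{prop3.1} (via \eqref{16dec5bis}) to compare $\log^a(2+\phi^2 w^2)$ with $s^a$, but that estimate is available only because the non-negativity of the Lyapunov functional (equivalently, the present lemma) has already been fed into the machinery of Section~\ref{section2}; invoking it here risks circularity. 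The shift argument sidesteps all of these issues by reducing to a trivial boundedness statement for $u$ at a fixed time $T^*(x)-\delta<T(x)$, and that is what the cited papers actually do.
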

 {\it Proof:}
The argument is the same  as  the similar part in  Proposition \ref{proplyap} in this paper.

\Box

\vspace{0.1cm}

\subsection{Proof of Theorem \ref{t2} }\label{3.3}

In this subsection, we prove Theorem \ref{t2}.  Note that the lower bound follows from the finite speed of propagation and the wellposedness  in $H^1\times L^2$.
For a detailed argument in the similar case of equation (\ref{NLW}), see Lemma 3.1 (page 1136) in \cite{MZimrn05}.\\
\noindent We consider $u$  a solution of (\ref{gen}) which is defined under the graph of $x\mapsto T(x)$, and $x_0$ a non characteristic point.
Let 
\begin{equation}\label{21dec1}
 t_2(x_0)=\max(T(x_0)-e^{-S_4},t_1(x_0)).
\end{equation}
Given some $T_0\in (t_2(x_0),T(x_0)]$, for all
$x\in \er$ is such that $|x-x_0|\le \frac{T_0-t_2(x_0)}{\delta_0(x_0)}$, where $\delta(x_0)$ is defined in (\ref{nonchar}), we aim   at bounding $\|(w,\partial_s w)(s)\|_{H^1\times L^2((-1,1))}$ for $s$ large.

\medskip


\noindent As in  \cite{ HZnonl12, omar1}, by combining Theorem  \ref{t1}' 
 and  Lemma \ref{L19}  we get the following bounds:
\begin{cor}\label{22jan1} { (Bound on  $L_0(w(s),s)$)}.
For all $T_0\in (t_2(x_0),T(x_0)]$, for all $s\ge  -\log (T_0-t_2(x_0))$
 and $x\in \er$ where $|x-x_0|\le \frac{e^{-s}}{\delta_0(x_0)}$,
 we have
\begin{equation}\label{2018N00}
-C\leq L_{0}(w(s),s) \leq C L_{0}(w(\tilde{s}_{2}),\tilde{s}_{2})+C ,
\end{equation}
where $\tilde{s}_{2}=-\log (T^*(x)-t_2(x_0))$.\\
Moreover, 
 for all $s\ge  -\log (T^*(x)-t_2(x_0))$, we have
\begin{equation}\label{22janv2}
\int_{s}^{s+1}
 \iint (\partial_{s}w)^2 \frac{\w}{1-y^2}\y \s \le K,
\end{equation}
where  $K=K(a,p,T^*(x),\|(u(t_2),u_t(t_2))\|_{
H^{1}\times L^{2}(I(x_0,\frac{T_0-t_2(x_0)}{\delta_0(x_0)})
)})$,
$C=C(a,p)$    and   $\delta_0(x_0)\in (0,1)$ is defined in (\ref{nonchar}).
\end{cor}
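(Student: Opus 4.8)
The plan is to derive Corollary~\ref{22jan1} as a direct consequence of Theorem~\ref{t1}' and Lemma~\ref{L19}, following the scheme used in \cite{HZnonl12, omar1}. First I would fix $T_0\in (t_2(x_0),T(x_0)]$ and $x\in\er$ with $|x-x_0|\le \frac{e^{-s}}{\delta_0(x_0)}$, and set $\tilde s_2 = -\log(T^*(x)-t_2(x_0))$, which by \eqref{21dec1} is at least $\max(S_4,-\log(T^*(x)-t_1(x_0)))$, so that Theorem~\ref{t1}' applies on $[\tilde s_2,+\infty)$. Theorem~\ref{t1}' gives that $s\mapsto L(w(s),s)$ is non-increasing along the sequence $s, s+1, s+2,\dots$ (and in fact, via \eqref{17dec1}--the monotonicity is valid continuously once $\theta$ is fixed), hence $L(w(s),s)\le L(w(\tilde s_2),\tilde s_2)$ for all $s\ge \tilde s_2$. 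Recalling \eqref{10dec2}, namely $L(w(s),s)=\exp\big(\frac{p+3}{\sqrt s}\big)L_0(w(s),s)+\theta e^{-s}$, and using $1\le \exp\big(\frac{p+3}{\sqrt s}\big)\le \exp\big(\frac{p+3}{\sqrt{S_3}}\big)$ together with $0<\theta e^{-s}\le \theta$, the upper bound on $L$ translates into an upper bound $L_0(w(s),s)\le C L_0(w(\tilde s_2),\tilde s_2)+C$ with $C=C(a,p)$, which is the right inequality in \eqref{2018N00}.

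For the lower bound $-C\le L_0(w(s),s)$, I would argue by contradiction using Lemma~\ref{L19}: if $L(w(s),s)<0$ for some $s\ge\max(S_4,-\log(T^*(x)-t_1(x_0)))$, then $w$ blows up in finite similarity time $s_4>s$, which is impossible since $w=w_{x,T^*(x)}$ is globally defined on $B\times[-\log(T^*(x)-t_2(x_0)),+\infty)$ by the definition of the similarity variables \eqref{scaling} and the fact that $x$ is taken so that $T^*(x)\le T(x)$ (using that $x_0$ is non-characteristic and $T^*$ stays below the blow-up graph, as in \eqref{18dec1} and \eqref{nonchar}). Hence $L(w(s),s)\ge 0$ for all such $s$, and feeding this back into \eqref{10dec2} and using $\exp\big(\frac{p+3}{\sqrt s}\big)\ge 1$, $\theta e^{-s}>0$ gives $L_0(w(s),s)\ge -\theta e^{-s}\ge -C$, completing \eqref{2018N00}.

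Finally, for the space-time estimate \eqref{22janv2}, I would integrate the differential inequality \eqref{t1lyap} of Theorem~\ref{t1}' between $s$ and $s+1$:
\begin{equation*}
\frac{2}{p-1}\int_s^{s+1}\iint(\partial_s w)^2\frac{\w}{1-y^2}\y\,\t \le L(w(s),s)-L(w(s+1),s+1).
\end{equation*}
By the lower bound just established, $L(w(s+1),s+1)\ge -C$ (in fact $\ge -\theta e^{-(s+1)}$), and by the upper bound $L(w(s),s)\le L(w(\tilde s_2),\tilde s_2)$, so the right-hand side is bounded by $L(w(\tilde s_2),\tilde s_2)+C$, a constant depending only on $a$, $p$, $T^*(x)$ and the initial data norm $\|(u(t_2),u_t(t_2))\|_{H^1\times L^2(I(x_0,\frac{T_0-t_2(x_0)}{\delta_0(x_0)}))}$; this is exactly the claimed $K$. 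Translating $L(w(\tilde s_2),\tilde s_2)$ into the original variables via \eqref{scaling} (as in Remark after Corollary~\ref{19dec3}) shows the dependence on the data is as stated. The main obstacle here is essentially bookkeeping: one must verify carefully that the various thresholds $S_3\le S_4$ and the times $t_0\le t_1\le t_2$ are consistent so that Theorem~\ref{t1}' and Lemma~\ref{L19} both apply on the same interval, and that the uniformity in $x$ near $x_0$ (with the shifted cones $T^*(x)$) is preserved throughout—but no genuinely new estimate is needed beyond Theorem~\ref{t1}' and Lemma~\ref{L19}.
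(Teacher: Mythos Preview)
Your proposal is correct and follows exactly the approach indicated in the paper, which simply states that the corollary is obtained ``by combining Theorem~\ref{t1}' and Lemma~\ref{L19}'' as in \cite{HZnonl12, omar1}. You have supplied the bookkeeping details (translating between $L$ and $L_0$ via \eqref{10dec2}, using the global definition of $w_{x,T^*(x)}$ to rule out blow-up, and integrating \eqref{t1lyap}) that the paper leaves implicit, but the strategy is identical.
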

\medskip

\begin{nb} Using the definition of  (\ref{scaling}) of
$w_{x,T^*(x)}=w$, we write easily
\begin{equation}\label{2018N5}
   L_0(w(\widetilde{s_2}), \widetilde{s_2})
\le \widetilde{K_1},
\end{equation}
where
$\widetilde{K_1}=\widetilde{K_1}(T(x_0)-t_2(x_0),\|(u(t_2(x_0)),\partial_tu(t_2(x_0)))\|_{H^{1}\times L^{2}(I(x_0,\frac{T(x_0)-t_2(x_0)}{\delta_0(x_0)}))})$.
\end{nb}

\medskip

\noindent Starting from these bounds, the proof of Theorem \ref{t2} is similar to the proof in \cite{MZajm03,MZimrn05}
except for
the treatment of the nonlinear terms and
of the perturbation terms. In our opinion, handling these terms is straightforward in
all the steps of the proof, except for the first step, where we bound the time averages of the $L^{p+1}_{\rho}((-1,1))$ norm of $w$. For that reason,
we only give that step and refer to  \cite{MZajm03,MZimrn05} for the remaining steps in the proof of  Theorem \ref{t2}. This is the step we prove here.
\begin{prop}\label{projan22}
 For all  $s\ge 1-\log (T^*(x)-t_2(x_0))$,
\begin{equation}\label{projan221}
\int_{s}^{s+1}\!\!\iint e^{-\frac{2(p+1)s}{p-1}}s^{\frac{2a}{p-1}} F(\p w)\w {\mathrm{d}}y\t
\le K.
\end{equation}
\end{prop}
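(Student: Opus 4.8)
The plan is to derive \eqref{projan221} as a direct consequence of the bounds already collected, without any new PDE estimates. The key observation is that the quantity $e^{-\frac{2(p+1)s}{p-1}}s^{\frac{2a}{p-1}} F(\p w)$ appearing in the integrand is, by the very definition \eqref{5jan1} of $E_1(w(s),s)$, one of the terms inside the natural energy. So I would start from the identity obtained by isolating the nonlinear term in $E_1$:
\begin{equation*}
\iint e^{-\frac{2(p+1)s}{p-1}}s^{\frac{2a}{p-1}} F(\p w)\w\,\y
= \iint\Big(\tfrac12(\partial_s w)^2+\tfrac12(\partial_y w)^2(1-y^2)+\tfrac{p+1}{(p-1)^2}w^2\Big)\w\,\y - E_1(w(s),s).
\end{equation*}
Hence it suffices to bound, after integrating over $[s,s+1]$, the time averages of $\iint(\partial_sw)^2\w\,\y$, $\iint(\partial_yw)^2(1-y^2)\w\,\y$, $\iint w^2\w\,\y$ from above, and to bound $E_1(w(s),s)$ from below.

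For the three quadratic space-time averages, I would invoke Corollary \ref{22jan1}: inequality \eqref{22janv2} gives $\int_s^{s+1}\iint(\partial_sw)^2\frac{\w}{1-y^2}\,\y\,\s\le K$, which controls the $(\partial_sw)^2$ average since $\w\le\frac{\w}{1-y^2}$. For the $w^2$ and $(\partial_yw)^2(1-y^2)$ averages I would use the polynomial estimate, but in fact one can do better here: the bound \eqref{2018N00} on $L_0(w(s),s)$ together with the definition \eqref{10dec2} of $L$ and \eqref{5jan1}, combined with the lower bound on $E_1$ coming from the coercivity of its quadratic part modulo the nonlinear term, should close everything at the level of a \emph{constant} bound $K$ rather than a polynomial one. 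Concretely, since $L_0$ is bounded above and below by constants depending on the data, and $L_0$ differs from $E_1$ by the manifestly controllable term $\frac1{s\sqrt s}\iint\partial_sw\,w\,\w\,\y$ (estimated by \eqref{basic1} and then by the $\partial_sw$ average plus the $w^2$ average), the energy $E_1(w(s),s)$ itself is bounded by a constant; and the lower bound on $F$ (via the definition \eqref{defF} of $F$, $F\ge -C$ near the origin and $F\ge 0$ for large argument, up to the $Ce^{-2s}$ error handled as in \eqref{BB6}) turns the displayed identity into the desired upper bound on the nonlinear term's time average.

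The main obstacle I anticipate is the bookkeeping needed to upgrade the polynomial-in-$s$ control of Proposition \ref{prop3.1} to a uniform constant here: one must be careful that the term $\frac1{s\sqrt s}\iint\partial_sw\,w\,\w\,\y$ relating $L_0$ and $E_1$, as well as the lower coercivity bound on $E_1$ modulo $\iint F(\p w)\w\,\y$, do not reintroduce a growing factor. The clean way around this is to treat the chain $L(w(s),s)\ge -\theta e^{-s}$ from below (using Lemma \ref{L19}, exactly as in the non-blow-up argument of Proposition \ref{proplyap}: if $L$ were very negative $w$ would blow up prematurely, contradicting non-characteristicity) and $L(w(s),s)\le L(w(\tilde s_2),\tilde s_2)$ from above, so that $L_0$ and then $E_1$ are pinned between two data-dependent constants; then a single application of the displayed identity together with \eqref{22janv2} and the elementary lower bound $F(\p w)\ge -Ce^{-2s}$ on the set where $\p w^2\le 1$ (and $F\ge 0$ otherwise) yields \eqref{projan221} with the stated constant $K$. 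Integrating the resulting pointwise-in-$s$ inequality over $[s,s+1]$ finishes the proof; I would then remark that this is precisely the input required to run the Merle--Zaag scheme of \cite{MZajm03,MZimrn05} for the remaining steps of Theorem \ref{t2}.
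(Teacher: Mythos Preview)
Your proposal contains a genuine gap. The displayed identity
\[
\iint e^{-\frac{2(p+1)s}{p-1}}s^{\frac{2a}{p-1}} F(\p w)\w\,\y
= \iint\Big(\tfrac12(\partial_s w)^2+\tfrac12(\partial_y w)^2(1-y^2)+\tfrac{p+1}{(p-1)^2}w^2\Big)\w\,\y - E_1(w(s),s)
\]
is of course correct, and the boundedness of $E_1$ (via \eqref{2018N00} and the small perturbation linking $L_0$ to $E_1$) is fine. The problem is the remaining step: to bound the left side by a constant you need a \emph{constant} (not polynomial) upper bound on the time averages of $\iint(\partial_y w)^2(1-y^2)\w\,\y$ and $\iint w^2\w\,\y$. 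At this point in the argument no such bound is available: Proposition~\ref{prop3.1} gives only $Ks^{q_1}$, and Corollary~\ref{22jan1} controls only the $\partial_s w$ term via \eqref{22janv2}. Your appeal to ``coercivity of the quadratic part modulo the nonlinear term'' is circular: knowing that $E_1=[\text{quadratic}]-[\text{nonlinear}]$ is bounded does not prevent both pieces from being simultaneously large and cancelling. The lower bound $F(\p w)\ge -Ce^{-2s}$ only yields $[\text{quadratic}]\ge E_1-Ce^{-2s}$, which is trivial; it does not give the upper bound on $[\text{quadratic}]$ that you need.

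The paper breaks this circularity by producing a \emph{second}, independent relation: one multiplies equation \eqref{A} by $w\w$ and integrates over $(-1,1)\times[s_1,s_2]$ to obtain an identity (their \eqref{et1}) in which the nonlinear term appears as $\int \p w f(\p w)\w\approx (p+1)\int F(\p w)\w$ with \emph{different} coefficients in front of the quadratic pieces. Adding half of this to the time-integrated energy identity \eqref{et} cancels the dangerous $\int(\partial_y w)^2(1-y^2)\w$ term entirely and leaves $\frac{p-1}{2}\int\!\!\int e^{\cdots}F(\p w)\w$ equal to boundary terms at $s_1,s_2$ plus cross terms (their \eqref{et44}). Each of these is then shown, in a separate Lemma, to be bounded by $K/\varepsilon + K\varepsilon\int\!\!\int e^{\cdots}F(\p w)\w$ (with the boundary $w^2$ terms handled by interpolation and a mean-value choice of $s_1,s_2$), so the nonlinear term can be absorbed for small $\varepsilon$. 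This bootstrap via the $w\w$-multiplier identity is the missing ingredient in your argument; without it, your route only reproduces a polynomial bound, not \eqref{projan221}.
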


Proof:
For $s\ge 1-\log (T^*(x)-t_2(x_0))$, let us work with time integrals betwen $s_1$ et $s_2$ where $s_1\in [s-1,s]$
and $s_2\in [s+1,s+2]$.
 By integrating the expression (\ref{5jan1}) of $L_0(w(s),s)$ in time between $s_1$ and $s_2$, where $s_2>s_1>-\log (T^*(x)-t_2(x_0))$, we obtain:
\begin{align}\label{et}
\int_{s_1}^{s_2}\!\!L_0(w(s),s)\s=&\displaystyle\int_{s_1}^{s_2}\iint\!\!\Big ( \frac{1}{2}(\partial_{s}w)^2
+\frac{p+1}{(p-1)^2}w^2
-e^{-\frac{2(p+1)s}{p-1}}s^{\frac{2a}{p-1}}   F(\p w)\Big )\w {\mathrm{d}}y{\mathrm{d}}s\nonumber\\
&+\frac{1}{2}\displaystyle\int_{s_1}^{s_2}\!\!\iint\!\!(\partial_y w)^2(1-y^2)\w {\mathrm{d}}y{\mathrm{d}}s-\int_{s_1}^{s_2}\!\! \frac1{s\sqrt{s}}\!\!\displaystyle\iint\!\!w\partial_s w\w {\mathrm{d}}y{\mathrm{d}}s.
\end{align}
By multiplying the equation (\ref{A}) by $w\w$ and integrating both in time and in space over $(-1,1)\times [s_1,s_2]$  we obtain the following identity, after some
integration by parts:
%
\begin{eqnarray}\label{et1}
&&\Big [\iint\!\!\Big (w\partial_{s}w+\frac{5-p}{2(p-1)}w^2\Big ) \w{\mathrm{d}}y\Big ]_{s_1}^{s_2}=
\int_{s_1}^{s_2}\!\!\iint\!\!(\partial_{s}w)^2\w{\mathrm{d}}y{\mathrm{d}}s\\
&&-\int_{s_1}^{s_2}\!\!\iint\!\!(\partial_y w)^2(1-y^2)\w{\mathrm{d}}y{\mathrm{d}}s
-\frac{2p+2}{(p-1)^2}\int_{s_1}^{s_2}\!\!\iint\!\!w^2\w{\mathrm{d}}y{\mathrm{d}}s\nonumber\\
&&+\int_{s_1}^{s_2}\!\!\iint\!\!e^{-\frac{2ps}{p-1}}s^{\frac{a}{p-1}}w f(\p w)\w{\mathrm{d}}y{\mathrm{d}}s-\frac4{p-1}\!\int_{s_1}^{s_2}\!\!\iint\!\!w\partial_{s}w
\frac{y^2\w}{1-y^2}{\mathrm{d}}y{\mathrm{d}}s\nonumber\\
&&+2\!\!\int_{s_1}^{s_2}\!\!\iint\!\!y\partial_{y}w\partial_{s} w \w{\mathrm{d}}y{\mathrm{d}}s
+\frac{2a}{p-1}\int_{s_1}^{s_2}\iint \frac1{s}y\partial_y w w\w\y\s\nonumber\\
&&+\!\!\int_{s_1}^{s_2}\!\!\iint\!\!\gamma(s) w^2\w\y\s
+\frac{2a}{p-1}\int_{s_1}^{s_2}\iint \frac{1}{s}\partial_s w w\w \y\s.\nonumber
\end{eqnarray}
Note that, by using the identity  \eqref{defF123}, we get
\begin{align}\label{23jan12}
e^{-\frac{2(p+1)s}{p-1}}s^{\frac{2a}{p-1}}\Big(\frac{\p w}2 f(\p w)- F(\p w)\Big)&=\frac{p-1}{2}e^{-\frac{2(p+1)s}{p-1}}s^{\frac{2a}{p-1}} F(\p w)\\
&-\frac{p+1}2e^{-\frac{2(p+1)s}{p-1}}s^{\frac{2a}{p-1}}\Big(F_1(\p w)+F_2(\p w)\Big).\nonumber
\end{align}
By combining the identities (\ref{et}),  (\ref{et1}) and exploiting  \eqref{23jan12}, we obtain
\begin{eqnarray}\label{et44}
&&\frac{p-1}{2}\int_{s_1}^{s_2}\!\!\iint e^{-\frac{2(p+1)s}{p-1}}s^{\frac{2a}{p-1}} F(\p w)\w {\mathrm{d}}y{\mathrm{d}}s\nonumber\\
&=&\frac12\Big [\iint\!\!\Big (w\partial_{s}w+\frac{5-p}{2(p-1)}w^2\Big ) \w{\mathrm{d}}y\Big ]_{s_1}^{s_2}-
\int_{s_1}^{s_2}\!\!\iint\!\!(\partial_{s}w)^2\w{\mathrm{d}}y{\mathrm{d}}s\nonumber\\
&&+\int_{s_1}^{s_2}\!\!L_0(w(s),s)\s+\frac2{p-1}\!\int_{s_1}^{s_2}\!\!\iint\!\!w\partial_{s}w
\frac{y^2\w}{1-y^2}{\mathrm{d}}y{\mathrm{d}}s\nonumber\\
&&-\!\!\int_{s_1}^{s_2}\!\!\iint\!\!y\partial_{y}w\partial_{s} w \w{\mathrm{d}}y{\mathrm{d}}s
\underbrace{-\frac{a}{p-1}\int_{s_1}^{s_2}\iint \frac1{s}y\partial_y w w\w\y\s}_{A_1}\nonumber\\
&&\underbrace{-\frac12 \int_{s_1}^{s_2}\!\!\iint\!\!\gamma(s) w^2\w\y\s}_{A_2}
\underbrace{-\frac{a}{p-1}\int_{s_1}^{s_2}\iint \frac{1}{s}\partial_s w w\w \y\s}_{A_3}\\
&&+\underbrace{\int_{s_1}^{s_2}\!\! \frac1{s\sqrt{s}}\!\!\displaystyle\iint\!\!w\partial_s w\w {\mathrm{d}}y{\mathrm{d}}s}_{A_4}+\underbrace{\frac{p+1}{2}\int_{s_1}^{s_2}\!\!\iint e^{-\frac{2(p+1)s}{p-1}}s^{\frac{2a}{p-1}} F_1(\p w)\w {\mathrm{d}}y{\mathrm{d}}s}_{A_5}\nonumber\\
&&+\underbrace{\frac{p+1}{2}\int_{s_1}^{s_2}\!\!\iint e^{-\frac{2(p+1)s}{p-1}}s^{\frac{2a}{p-1}} F_2(\p w)\w {\mathrm{d}}y{\mathrm{d}}s}_{A_6}.\nonumber
\end{eqnarray}
We claim that Proposition \ref{projan22} follows from the following Lemma where we    control  the space-time integral of the nonlinear term  of $w$ and all the terms on the right-hand side of the relation
 (\ref{et44}) in terms of
the left-hand  side:
\begin{lem}\label{g}
 For all  $s\ge 1-\log (T^*(x)-t_3(x_0))$,
for some $t_3(x_0)\in [t_2(x_0), T(x_0))$, for all $\varepsilon>0$,
\begin{equation}\label{control}
\!\!\iint  |w|^{p+1}\w {\mathrm{d}}y
\le K +C
\iint e^{-\frac{2(p+1)s}{p-1}}s^{\frac{2a}{p-1}} F(\p w)\w {\mathrm{d}}y,
\end{equation}
\begin{equation}\label{0control}
\iint e^{-\frac{2(p+1)s}{p-1}}s^{\frac{2a}{p-1}} F(\p w)\w {\mathrm{d}}y
\le   K +C
\!\!\iint  |w|^{p+1}\w {\mathrm{d}}y,
\end{equation}
\begin{equation}\label{control3}
\int_{s_1}^{s_2}\!\!\!\iint\!|y\partial_yw\partial_s w| \w{\mathrm{d}}y{\mathrm{d}}s
\le \frac{K}{\varepsilon} +K\varepsilon
\N,\qquad
\end{equation}
\begin{equation}\label{control1}
\sup_{s\in [s_1,s_2]}\iint \!\!w^2(y,s)\w{\mathrm{d}}y\le \frac{K}{\varepsilon} +K\varepsilon\N,
\end{equation}
\begin{equation}\label{control30}
\!\int_{s_1}^{s_2}\!\!\iint\!\!w\partial_{s}w
\frac{y^2\w}{1-y^2}{\mathrm{d}}y{\mathrm{d}}s
\le \frac{K}{\varepsilon} +K\varepsilon
\N,\qquad
\end{equation}
\begin{align}\label{control4}
\iint|w\partial_{s}w|\w {\mathrm{d}}y\le&
\iint (\partial_{s}w)^2\w {\mathrm{d}}y+\frac{K}{\varepsilon}\nonumber\\ 
&+K\varepsilon
\N,
\end{align}
\begin{equation}\label{control5}
\iint \Big( (\partial_{s}w(y,s_1))^2+(\partial_{s}w(y,s_2))^2\Big)\w {\mathrm{d}}y
\le K,
\end{equation}
\begin{equation}\label{A10}
|A_1|\le \frac{K}{\varepsilon} +(K\varepsilon+
\frac{C}{s_1})
\N,
\end{equation}
\begin{equation}\label{A20}
|A_2|+|A_3|+|A_4|
\le 
\frac{K}{\varepsilon} +K\varepsilon\N,
\end{equation}
\begin{equation}\label{A30}
|A_5|+|A_6|
\le C+ \frac{C}{ s_1}\N.
\end{equation}
\end{lem}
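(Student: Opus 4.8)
\emph{Proof strategy.} The plan is to prove the eleven estimates and then insert them into the identity \eqref{et44}: for $\varepsilon$ small the space-time quantity $\N$ reappears on the right-hand side with a coefficient $\le\tfrac12$ and can be absorbed, which, together with the fact that $\int_{s_1}^{s_2}L_0(w(s),s)\,\s$ and $\int_{s_1}^{s_2}\iint(\partial_sw)^2\w\y\,\s$ are $O(1)$ by Corollary~\ref{22jan1}, yields $\N\le K$, i.e.\ Proposition~\ref{projan22}. Three ingredients will be used throughout: the logarithmic asymptotics of $F,F_1,F_2$ and $uf(u)$ (encoded in \eqref{id2}, \eqref{15dec1}, \eqref{defF2}, \eqref{defF123}), the identity \eqref{id1}, and the rough polynomial bound \eqref{16dec5bis}. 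The key new point, absent in the scale invariant case, is that \eqref{16dec5bis} is needed \emph{only} to bound $\log(2+\p^2w^2)$ from above by $\tfrac{2s}{p-1}+K\log s$; combined with the lower bound $\log(2+\p^2w^2)\ge\log\ps=\tfrac{2s}{p-1}-\tfrac{a\log s}{p-1}$ valid on the set $A_2(s)$ of \eqref{27nov1}, this shows that $s^{-a}\log^a(2+\p^2w^2)$ lies between two positive constants on $A_2(s)$ for $s$ large, while on the complementary set $A_1(s)$ every nonlinear integrand is $O(e^{-s})$ as in \eqref{16dec1}.

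I would first prove \eqref{0control} and \eqref{control}. Splitting $(-1,1)=A_1(s)\cup A_2(s)$ and starting from \eqref{BB6}, on $A_2(s)$ one has $s^{-a}|w|^{p+1}\log^a(2+\p^2w^2)\le C|w|^{p+1}$ by the previous observation, while on $A_1(s)$ the integrand is $O(e^{-s})$; this gives \eqref{0control}. For \eqref{control} I would use the asymptotic lower bound $F(u)\ge c|u|^{p+1}\log^a(2+u^2)$ for $|u|$ large, together with $F\ge0$ and the fact that $\p^2w^2\ge\ps\to\infty$ (hence $\ps|w|$ is large) on $A_2(s)$, to get $e^{-\frac{2(p+1)s}{p-1}}s^{\frac{2a}{p-1}}F(\p w)\ge c|w|^{p+1}$ on $A_2(s)$, while $|w|^{p+1}=O(e^{-s})$ on $A_1(s)$. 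Integrating in time, these two estimates also give $\int_{s_1}^{s_2}\iint|w|^{p+1}\w\y\,\s\le K+C\N$.

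For the quadratic estimates I would repeatedly use: Young's inequality in the form $w^2\le\varepsilon|w|^{p+1}+C_\varepsilon$ (licit since $p+1>2$); the splitting $\w=(1-y^2)\w\cdot\tfrac1{1-y^2}$, so that \eqref{basic1} applies with the weight $\tfrac{\w}{1-y^2}$ on the $\partial_sw$ and $y\partial_yw$ factors; the Hardy inequalities \eqref{0Hardybis}--\eqref{Hardybis}; and the energy bound coming from $E_1(w(s),s)\le L_0(w(s),s)+\tfrac1{s\sqrt s}\big|\iint w\partial_sw\w\y\big|$ and $L_0(w(s),s)\le K$ (Corollary~\ref{22jan1}), which after absorbing the last term for $s$ large yields
\begin{equation}\label{energyid-plan}
\iint\Big((\partial_sw)^2+(\partial_yw)^2(1-y^2)+w^2\Big)\w\y\le K+C\iint e^{-\frac{2(p+1)s}{p-1}}s^{\frac{2a}{p-1}}F(\p w)\w\y.
\end{equation}
Picking $s_1\in[s-1,s]$, $s_2\in[s+1,s+2]$ by the mean value theorem applied to $\tau\mapsto\iint(\partial_sw(y,\tau))^2\w\y$ (whose integral over $[s-1,s+2]$ is $O(1)$ by \eqref{22janv2}) gives \eqref{control5}. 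For \eqref{control1} I would write $\iint w^2(y,s)\w\y=\iint w^2(y,s_*)\w\y+2\int_{s_*}^{s}\iint w\partial_\tau w\w\y\,\t$ with $s_*\in[s_1,s_2]$ chosen so that $\iint w^2(y,s_*)\w\y$ equals the time average of $\iint w^2\w\y$ over $[s_1,s_2]$; then $\int_{s_1}^{s_2}\iint w^2\w\y\,\s\le\varepsilon\int_{s_1}^{s_2}\iint|w|^{p+1}\w\y\,\s+C_\varepsilon\le\tfrac K\varepsilon+C\varepsilon\N$ by Young and the time-integrated \eqref{control}, while the increment is controlled by \eqref{basic1} and \eqref{22janv2}; this is \eqref{control1}. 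Then \eqref{control4} follows from \eqref{control1} and $|w\partial_sw|\le(\partial_sw)^2+\tfrac14w^2$, and \eqref{control3}, \eqref{control30} are immediate from \eqref{basic1} with the weight split, \eqref{22janv2}, \eqref{Hardybis} and \eqref{energyid-plan}.

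Finally, $A_1,A_2,A_3,A_4$ all carry a factor $\tfrac1s$ or $\tfrac1{s\sqrt s}$ and $|\gamma(s)|\le\tfrac Cs$, so after \eqref{basic1} (with the weight split on the $y\partial_yw$ and $\partial_sw$ factors), \eqref{Hardybis} and \eqref{energyid-plan} they are $\le\tfrac K\varepsilon+(K\varepsilon+\tfrac C{s_1})\N$, which is \eqref{A10}--\eqref{A20}. For $A_5,A_6$ I would use \eqref{defF2}, \eqref{defF123} and \eqref{15dec1}: on $A_2(s)$, $e^{-\frac{2(p+1)s}{p-1}}s^{\frac{2a}{p-1}}|F_1(\p w)|\le\tfrac C{s^{a+1}}|w|^{p+1}\log^{a-1}(2+\p^2w^2)\le\tfrac Cs|w|^{p+1}$ and $e^{-\frac{2(p+1)s}{p-1}}s^{\frac{2a}{p-1}}|F_2(\p w)|\le\tfrac C{s^{a+2}}|w|^{p+1}\log^a(2+\p^2w^2)+Ce^{-2s}$, while on $A_1(s)$ both are $O(e^{-s})$; with \eqref{control} this gives $|A_5|+|A_6|\le C+\tfrac C{s_1}\N$, i.e.\ \eqref{A30}. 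Plugging everything into \eqref{et44} with $\varepsilon$ small and bounding the boundary brackets $[\,\cdot\,]_{s_1}^{s_2}$ by $O(1)$ via \eqref{control5} and \eqref{control1} then absorbs $\N$ and proves Proposition~\ref{projan22}. I expect the main obstacle to be the single-time estimate \eqref{control1} (and through it \eqref{control4}), and, more fundamentally, keeping the logarithmic corrections harmless throughout — precisely the point where the rough bound \eqref{16dec5bis} of Section~\ref{section2} is indispensable and where the argument departs from the scale invariant case.
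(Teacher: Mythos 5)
Your proposal follows essentially the same route as the paper: the core of the argument is the splitting of $(-1,1)$ into $A_1(s)$ and $A_2(s)$ from \eqref{27nov1}, using the rough bound \eqref{16dec5bis} for the upper bound and the definition of $A_2(s)$ for the lower bound to get $\log(2+\p^2w^2)\asymp s$ there (hence \eqref{control} and \eqref{0control}), the bound $|F_1(\p w)|+|F_2(\p w)|\le C+C F(\p w)/s$ for $A_5,A_6$, and references to the unperturbed-case machinery (Young, Hardy, weight splitting, mean value in time) for the quadratic estimates, exactly as the paper does. The only quibble is the constant $\tfrac{2s}{p-1}$ in your upper bound for $\log(2+\p^2w^2)$, which should be $\tfrac{4s}{p-1}$ since $\p^2=e^{\frac{4s}{p-1}}s^{-\frac{2a}{p-1}}$; this is harmless here because only the two-sided comparison with $s$ is used.
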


\bigskip

Indeed, from (\ref{et44}) and this Lemma,
we deduce that
\begin{align*}
\N\le \frac{K}{\varepsilon}
+(K\varepsilon+
 \frac{C}{s_1})
\N.
\end{align*}
Now, we can use the fact that
$s_1\ge -1 -\log(T^*(x)-t_3(x_0))\ge-1 -\log(T(x_0)-t_3(x_0))$ and we choose 
$T(x_0)-t_3(x_0)$  small enough, so that
$$ \frac{C}{s_1}\le \frac1{-1 -\log(T(x_0)-t_3(x_0))} \le C \varepsilon .$$
 If we choose  $\varepsilon$ small enough so that
$ \frac{C}{s_1}\le \frac14$ and $K \varepsilon \le \frac14$, we obtain
\begin{align*}
\N\le {K}.
\end{align*}
Since $[s,s+1]\subset  [s_1,s_2]$, we derive 
 (\ref{projan221}).

\bigskip

It remains to prove Lemma \ref{g}.

\bigskip

 Proof of Lemma \ref{g}: We first deal with the  estimate \eqref{control} and   \eqref{0control}.
First,  we divide $(-1,1)$ into two parts $A_{1}(s)$ and $A_{2}(s)$ defined in   \eqref{27nov1}.
 
\medskip

Note that, by using 
 the definition of the set $A_1(s)$ defined in \eqref{27nov1}  and the expression of $\p$ defined in \eqref{psi}, we get,  
\begin{equation}\label{fev3}
 | w(y,s)|^{p+1}\le  \p^{-\frac{p+1}2}(s)\le Ce^{-s}\le C,\quad \  \forall y\in A_1(s).
\end{equation}
 From  (\ref{equiv1}), \eqref{id1} and the expression of $\p$ defined in \eqref{psi}, 
this yields
\begin{equation}\label{123fev123}
e^{-\frac{2(p+1)s}{p-1}}s^{\frac{2a}{p-1}}   F(\p w)\le Ce^{-2s}+ \frac{C}{s^a}   |w(y,s )|^{p+1}\log^a (2+\p^2 w^2) \le C,\quad \  \forall y\in A_1(s).
\end{equation}

\medskip

Next, by using  the definition of the set $A_2(s)$ introduced in \eqref{27nov1},  the expression of $\p$ defined in \eqref{psi}  and   the estimate   \eqref{16dec5bis} proved in Section \ref{section2},  we conclude
\begin{equation} \label{fev4}
K^{-1}s\le  {\log (2+\p^2 w^2)}
 \le K  s.
\end{equation}
 From  (\ref{equiv1}), \eqref{id1} and \eqref{fev4}, 
this yields
\begin{equation}\label{fev1}
  \frac{C}{K}   |w(y,s )|^{p+1} \le C+C e^{-\frac{2(p+1)s}{p-1}}s^{\frac{2a}{p-1}}   F(\p w)\le C+ CK   |w(y,s )|^{p+1},\quad \  \forall y\in A_2(s).
\end{equation}
%
%
Adding  \eqref{fev3}, \eqref{123fev123} and  \eqref{fev1}, we conclude that  \eqref{fev1} is  still valid, for all $y\in (-1,1).$  Therefore,  the  estimates \eqref{control}   and \eqref{0control} follow  immediately from  \eqref{fev1}  after   integration over   $(-1,1).$
\medskip 

 Thanks to \eqref{control} and \eqref{0control},  we can adapt with no difficulty
 the proof  in the unperturbed case \cite{MZajm03,MZimrn05}  (up to some very minor changes),
in order to get   the proof of the estimates \eqref{control3}, \eqref{control1}, \eqref{control30}, \eqref{control4}  and \eqref{control5}. 
 Also, by using   \eqref{control} and   the Hardy inequality \eqref{Hardybis},  we easily   conclude \eqref{A10} and \eqref{A10}.

\medskip

Finally, it remains only to control the terms $A_5$ and $A_6$.
Note from  \eqref{equiv1},    \eqref{equiv2}  and  \eqref{equiv3}  that
\begin{equation}\label{fev6}
| F_1(\p w)|+| F_2(\p w)|\le   C+C \frac{ F(\p w)}{s}.
\end{equation}
The result \eqref{A30} follows  immediately from  \eqref{fev6}.
This concludes the proof of Lemma \eqref{g} and Proposition \eqref{projan22} too.
\Box

Proof of Theorem \ref{t2}:
Thanks to \eqref{projan221},  \eqref{et} and  \eqref{2018N00},  we deduce,  for all $s\ge  -\log (T^*(x)-t_2(x_0))$
\begin{equation}\label{fev7}
\int_{s}^{s+1}\iint \Big((\partial_{s}w)^2+(\partial_y w)^2(1-y^2)+w^2\Big)\w \y\t \le K.
\end{equation}
By using   the covering technique (we refer the reader to Merle
 and Zaag \cite{MZimrn05} (pure power case) and Hamza and Zaag 
\cite{HZjhde12}), we  conclude
\begin{equation}\label{fev8}
\int_{s}^{s+1}\iint \Big((\partial_{s}w)^2+(\partial_y w)^2+w^2\Big) \y\t \le K.
\end{equation}
Similarly  to  the proof of Proposition \ref{prop3.1} (Step 1),
we get
\begin{equation}\label{fev9}
\iint |w(y,s)|^{p+2} \y \le K.
\end{equation}
By   \eqref{fev9}, \eqref{0control} and Jensen's inequality, we infer
\begin{equation}\label{fev10}
\iint e^{-\frac{2(p+1)s}{p-1}}s^{\frac{2a}{p-1}} F(\p w){\mathrm{d}}y
\le   K.
\end{equation}
Finally,  the definition of  $L_0(w(s),s)$ given  in \eqref{5jan1} and the estimate \eqref{2018N0} imply
\begin{equation}\label{fev11}
\iint \Big((\partial_{s}w)^2+(\partial_y w)^2(1-y^2)+w^2\Big) \w \y \le K.
\end{equation}
%
%
%
%
Once again, by using   the covering technique,  we deduce \eqref{main1}.  This concludes the proof of Theorem \ref{t2}.
\Box




 \appendix

 \section{Some elementary lemmas.}
 Let $f$, $F$, $F_2$  be the functions defined in  \eqref{deff}, \eqref{defF}  and  \eqref{defF123}.  
Clearly, we have 
\begin{lem}\label{FFF} \ {\  }Let $q>1$,\\ 
\begin{align}
\int_0^u|v|^{q-1}v\log^{{a}}(2+v^2 )\v  \sim&  \frac{| u|^{q+1}}{q+1}\log^{{a}}(2+u^2  ),\quad  \text{ as } \;\; |u| \to \infty,
\label{estF0}\\
F(u)  \sim &\frac{uf(u)}{p+1} \quad  \text{ as } \;\; |u| \to \infty,\label{estF}\\
F_2(u)\sim &\frac{Cuf(u)}{\log^2(2+u^2)}\quad  \text{ as } \;\; |u| \to \infty.\label{estF3}
\end{align}
\end{lem}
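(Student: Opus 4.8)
\emph{Proof proposal.} The three asymptotics form a hierarchy, so the plan is to prove \eqref{estF0} first, deduce \eqref{estF} as the special case $q=p$, and obtain \eqref{estF3} by pushing the same expansion one order further. For \eqref{estF0} I would first reduce to $u>0$, since $v\mapsto|v|^{q-1}v\log^a(2+v^2)$ is odd, so that both its primitive and $uf(u)$ are even. Setting $G(u)=\int_0^u v^q\log^a(2+v^2)\,\mathrm{d}v$ and $H(u)=\frac{u^{q+1}}{q+1}\log^a(2+u^2)$, note that $q+1>0$ forces $G(u),H(u)\to+\infty$ as $u\to+\infty$ for every real $a$, since a fixed positive power of $u$ beats every power of $\log(2+u^2)$. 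Both functions are $C^1$ for $u$ large, $G'(u)=u^q\log^a(2+u^2)$, and
\[
H'(u)=u^q\log^a(2+u^2)\Big(1+\frac{a}{q+1}\,\frac{2u^2}{(2+u^2)\log(2+u^2)}\Big),
\]
whose bracket tends to $1$; hence $H'(u)>0$ eventually and $G'(u)/H'(u)\to1$. L'H\^opital's rule (the $\tfrac{\star}{\infty}$ form) then gives $G(u)/H(u)\to1$, i.e.\ \eqref{estF0}. Taking $q=p$ and using $H(u)=\frac{u^{p+1}}{p+1}\log^a(2+u^2)=\frac{uf(u)}{p+1}$ for $u>0$, together with parity, yields \eqref{estF}.

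For \eqref{estF3} I would again work with $u>0$ and carry out two successive integrations by parts. The first produces the \emph{exact} identity
\[
F(u)-\frac{uf(u)}{p+1}=-\frac{2a}{p+1}\int_0^u\frac{v^{p+2}}{2+v^2}\log^{a-1}(2+v^2)\,\mathrm{d}v.
\]
Splitting $\frac{v^{p+2}}{2+v^2}=v^p-\frac{2v^p}{2+v^2}$, applying \eqref{estF0} with exponent $a-1$ to $\int_0^u v^p\log^{a-1}(2+v^2)\,\mathrm{d}v$, and discarding the $\frac{v^p}{2+v^2}$ term (negligible, see below), one gets $F(u)-\frac{uf(u)}{p+1}\sim-\frac{2a}{(p+1)^2}u^{p+1}\log^{a-1}(2+u^2)=F_1(u)$ with $F_1$ from \eqref{defF2}; by \eqref{defF123} this only yields $F_2(u)=o\big(u^{p+1}\log^{a-1}(2+u^2)\big)$, so a finer expansion is needed. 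A second integration by parts gives
\[
\int_0^u v^p\log^{a-1}(2+v^2)\,\mathrm{d}v=\frac{u^{p+1}}{p+1}\log^{a-1}(2+u^2)-\frac{2(a-1)}{p+1}\int_0^u\frac{v^{p+2}}{2+v^2}\log^{a-2}(2+v^2)\,\mathrm{d}v,
\]
and the last integral is $\sim\frac{u^{p+1}}{p+1}\log^{a-2}(2+u^2)$ by the same splitting and \eqref{estF0} at exponent $a-2$. Substituting back, the $F_1$ contributions cancel and one is left with
\[
F_2(u)\sim\frac{4a(a-1)}{(p+1)^3}\,u^{p+1}\log^{a-2}(2+u^2)=\frac{4a(a-1)}{(p+1)^3}\cdot\frac{uf(u)}{\log^2(2+u^2)},
\]
which is \eqref{estF3} with $C=\frac{4a(a-1)}{(p+1)^3}$; the range $u<0$ follows by parity.

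The step I expect to demand the most care is checking that the terms discarded above are genuinely $o\big(u^{p+1}\log^{a-2}(2+u^2)\big)$. Each such term carries the factor $\frac{v^p}{2+v^2}\le v^{p-2}$, and since $p>1$ an elementary splitting of $(0,u)$ into a piece near $u$ and a piece bounded away from it gives
\[
\int_0^u\frac{v^p}{2+v^2}\log^{b}(2+v^2)\,\mathrm{d}v=O\!\big(u^{p-1}\log^{\max(b,0)}(2+u^2)\big)\qquad(b\in\{a-1,a-2\}),
\]
and $u^{p-1}\log^{\max(b,0)}(2+u^2)=o\big(u^{p+1}\log^{a-2}(2+u^2)\big)$ because a positive power of $u$ dominates any power of $\log(2+u^2)$. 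No scaling or PDE structure enters here; the lemma is entirely one-variable calculus. One caveat: when $a\in\{0,1\}$ the constant $C$ above vanishes ($F_2\equiv0$ if $a=0$, and $F_2(u)=o(uf(u)/\log^2(2+u^2))$ if $a=1$), so in those degenerate cases \eqref{estF3} must be read as the corresponding $o$-statement, which is all that is needed in the sequel.
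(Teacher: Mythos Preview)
Your argument is correct. For \eqref{estF0} you take a different route from the paper: the paper integrates by parts once and bounds the remainder $\frac{2a}{q+1}\int_0^u\frac{|v|^{q+1}v}{2+v^2}\log^{a-1}(2+v^2)\,\mathrm{d}v$ by $C+C|u|^{q+1}\log^{a-1}(2+u^2)$, which is $o(|u|^{q+1}\log^a(2+u^2))$; you instead apply L'H\^opital directly to $G/H$, which is cleaner and avoids writing down the remainder at all. Both approaches give \eqref{estF} by specializing to $q=p$.

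For \eqref{estF3} your strategy coincides with the paper's: two successive integrations by parts, the splitting $\frac{v^{p+2}}{2+v^2}=v^p-\frac{2v^p}{2+v^2}$, and \eqref{estF0} at exponent $a-2$ to identify the leading term. The paper leaves $C$ implicit; you compute $C=\frac{4a(a-1)}{(p+1)^3}$, which is exactly what its decomposition $F_2=F_2^1+F_2^2$ with $F_2^1(u)=\frac{4a(a-1)}{(p+1)^2}\int_0^u|v|^{p-1}v\log^{a-2}(2+v^2)\,\mathrm{d}v$ produces. Your remainder bound $\int_0^u\frac{v^p}{2+v^2}\log^b(2+v^2)\,\mathrm{d}v=O(u^{p-1}\log^{\max(b,0)}(2+u^2))$ is slightly cruder than the paper's pointwise estimate \eqref{15jan2}, but it suffices since the gap of $u^2$ absorbs any logarithmic discrepancy. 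Your remark on the degenerate cases $a\in\{0,1\}$ is a welcome clarification that the paper does not make explicit.
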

\begin{proof} 
An integration by parts yields, for any $q>1$ and $a\in \er$,
\begin{equation}\label{IP0}
\int_0^u|v|^{q-1}v\log^{{a}}(2+v^2  )\v=
  \frac{| u|^{q+1}}{q+1}\log^{{a}}(2+u^2  )-\frac{2a}{q+1}{\int_0^u\frac{|v|^{q+1}v}{2+v^2}\log^{{a-1}}(2+v^2 )\v}.
\end{equation}
From the fact that,
$$\big|\frac{|v|^{q+1}v}{2+v^2}\log^{{a-1}}(2+v^2  )\big|\le C+C
|v|^{q}\log^{{a-1}}(v^2  + 2), \qquad \forall  v\in\er, $$
we can  write
\begin{align}\label{ddd}
|{\int_0^u\frac{|v|^{q+1}v}{2+v^2}\log^{{a-1}}(2+v^2  )\v}|\le C+C
  | u|^{q+1}\log^{a-1}(2+u^2  ),  \qquad \forall  u\in\er.
\end{align}
From \eqref{IP0} and \eqref{ddd}, one easily obtain
\begin{equation*}
\int_0^u|v|^{q-1}v\log^{{a}}(2+v^2  )\v  \sim  \frac{| u|^{q+1}}{q+1}\log^{{a}}(2+u^2 ),\quad  \text{ as } \;\; |u| \to \infty,
\end{equation*}
which ends the proof of  the estimates \eqref{estF0}.  Note that   \eqref{estF} is  trivial   from  \eqref{estF0} and  the definition of $f$ given  in  \eqref{defF}.\\
It remains to  prove  \eqref{estF3}.
Note that it easily follows from \eqref{IP0} that 
\begin{align}\label{IP1}
 F(u )=\int_0^uf(v)\v= & 
 \frac{| u|^{p+1}}{p+1}\log^{{a}}(2+u^2  )
-\frac{2a}{p+1}\int_0^u |v|^{p-1}v\log^{{a-1}}(2+v^2  )\v\nonumber\\
&+\frac{4a}{p+1}\int_0^u\frac{|v|^{p-1}v}{2+v^2}\log^{{a-1}}(2+v^2  )\v.
\end{align}
Once again, by integrating bt parts, we obtain 
\begin{align}\label{IP2}
\int_0^u |v|^{p-1}v\log^{{a-1}}(2+v^2  )\v=& \frac{| u|^{p+1}}{p+1}\log^{{a-1}}(2+u^2  )-\frac{2a-2}{p+1}\int_0^u |v|^{p-1}v\log^{{a-2}}(2+v^2  )\v\nonumber\\
&+\frac{4a-4}{p+1}\int_0^u\frac{|v|^{p-1}v}{2+v^2}\log^{{a-2}}(2+v^2  )\v.
\end{align}
Therefore,  \eqref{IP1}, \eqref{IP2}, \eqref{defF123} and \eqref{defF2}, imply that 
\begin{equation}\label{15jan1}
 F_2(u)= F^1_2(u)+ F^2_2(u),
\end{equation}
where
\begin{align}
 F^1_2(u)&=\frac{4a(a-1)}{(p+1)^2}\int_0^u |v|^{p-1}v\log^{{a-2}}(2+v^2  )\v,  \label{f33}\\
 F^2_2(u)&=\frac{4a}{p+1}\int_0^u\big(\frac{2a-2}{p+1}\frac{|v|^{p-1}v}{2+v^2}\log^{{a-2}}(2+v^2  )
-\frac{|v|^{p-1}v}{2+v^2}\log^{{a-1}}(2+v^2  )\big)\v.\label{15jan6}
\end{align}
Let  us find an equivalent to  $F_2$. 
By exploiting the following estimates
\begin{equation*}\label{r2}
\big|\frac{2a-2}{p+1}\frac{|v|^{p-1}v}{2+v^2}\log^{{a-2}}(2+v^2 )
-\frac{|v|^{p-1}v}{2+v^2}\log^{{a-1}}(2+v^2  )\big|\le C +C |v|^{p-\frac32},
\end{equation*}
one easily obtains 
\begin{equation}\label{15jan2}
\big|F^2_2(u)\big|\le C +C |u|^{p-\frac12}.
\end{equation}
The result  \eqref{estF3}   immediately  follows from \ref{estF0},\eqref{15jan1},  (\ref{f33}) and  \eqref{15jan2}, which ends the proof of Lemma \ref{FFF}.
\end{proof}

\Box

\bigskip


The following lemma  shows the asymptotic behavior of the solution of the
associated ODE 
\begin{equation}\label{A0}
v'' (t)= |v(t)|^{p-1}x \log^{a}(2+v^2(t)  ), \quad v(0)=A > 0\quad { \textrm and }\quad v'(0)=B > 0.
\end{equation}
 \begin{lem}\label{asym-psi-T}
The problem  \eqref{A0} has  one positive solution. Moreover,   there exist $\T <\infty$, such that the solution $\psi$ satisfies the following asymptotic:
 \begin{equation}\label{inequ-psi-ep-+}
 v (t) \sim \kappa_{a}(\T - t)^{-\frac{2}{p-1}} |\log(\T - t)|^{-\frac{a}{p-1}}, \text{ as } t \to\ T,
 \end{equation}
 where $\kappa_{a} =  \left(\frac{2^{1-2a}(p+1)}{(p-1)^{2-a}} \right)^{\frac1{p-1}}.$
 \end{lem}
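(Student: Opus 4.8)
The plan is to integrate the equation once, reduce the statement to the asymptotics of a single explicit improper integral, and then invert that relation. \emph{Step 1 (qualitative facts and finite-time blow-up).} Since $A,B>0$ and $f$ from \eqref{deff} satisfies $f(v)>0$ for $v>0$, while the solution stays positive one has $v''=f(v)>0$, hence $v'$ is increasing, $v'\ge B>0$, and $v$ is increasing and convex; thus $v$ remains positive on its maximal interval $[0,T)$, and uniqueness follows from $f\in C^1(\er)$. Multiplying \eqref{A0} by $v'$ and integrating gives the energy identity $\tfrac12(v'(t))^2=F(v(t))+C$ with $C=\tfrac12 B^2-F(A)$ and $F$ as in \eqref{defF}; since $F(v(t))+C\ge F(A)+C=\tfrac12 B^2>0$, this yields $v'(t)=\sqrt{2F(v(t))+2C}>0$. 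By \eqref{estF} of Lemma~\ref{FFF}, $F(w)\sim\frac{w^{p+1}}{p+1}\log^a(2+w^2)$ as $w\to+\infty$, which grows faster than any quadratic, so $\int^{+\infty}\!\frac{dw}{\sqrt{2F(w)+2C}}<\infty$; separating variables then gives $T=t+\int_{v(t)}^{+\infty}\frac{dw}{\sqrt{2F(w)+2C}}<\infty$ and $v(t)\to+\infty$ as $t\to T$.

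\emph{Step 2 (asymptotics of the inverse integral).} Set $G(V):=\int_V^{+\infty}\frac{dw}{\sqrt{2F(w)+2C}}$, so that $T-t=G(v(t))$. Using $\log(2+w^2)\sim 2\log w$ together with Lemma~\ref{FFF} one gets $2F(w)+2C\sim\frac{2^{a+1}}{p+1}w^{p+1}(\log w)^{a}$, hence
\[
G(V)\sim\Big(\tfrac{p+1}{2^{a+1}}\Big)^{1/2}\int_V^{+\infty}w^{-\frac{p+1}{2}}(\log w)^{-\frac a2}\,dw\qquad(V\to+\infty).
\]
Integrating by parts with the primitive $-\tfrac{2}{p-1}w^{-\frac{p-1}{2}}$ of $w^{-\frac{p+1}{2}}$, the boundary term at $V$ dominates (the remaining integral carries an extra factor $(\log w)^{-1}$), so the last integral is $\sim\frac{2}{p-1}V^{-\frac{p-1}{2}}(\log V)^{-\frac a2}$, whence
\[
T-t=G(v(t))\sim\frac{2}{p-1}\Big(\tfrac{p+1}{2^{a+1}}\Big)^{1/2}v(t)^{-\frac{p-1}{2}}\big(\log v(t)\big)^{-\frac a2}=:H(v(t)).
\]

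\emph{Step 3 (inversion and the constant $\kappa_a$).} Taking logarithms in $T-t\sim H(v(t))$ gives $\log(T-t)\sim-\frac{p-1}{2}\log v(t)$, i.e. $\log v(t)\sim\frac{2}{p-1}|\log(T-t)|$; feeding this back into $H(v(t))$ and solving for $v(t)$ yields
\[
v(t)^{\frac{p-1}{2}}\sim\frac{2}{p-1}\Big(\tfrac{p+1}{2^{a+1}}\Big)^{1/2}\Big(\tfrac{p-1}{2}\Big)^{\frac a2}(T-t)^{-1}|\log(T-t)|^{-\frac a2},
\]
and raising to the power $\frac{2}{p-1}$ gives $v(t)\sim\kappa_a(T-t)^{-\frac{2}{p-1}}|\log(T-t)|^{-\frac{a}{p-1}}$ with $\kappa_a^{\,p-1}=\big(\tfrac{2}{p-1}\big)^2\tfrac{p+1}{2^{a+1}}\big(\tfrac{p-1}{2}\big)^a=\frac{2^{1-2a}(p+1)}{(p-1)^{2-a}}$, exactly the value in \eqref{inequ-psi-ep-+}.

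Steps 1 and 2 are routine; the delicate point, which I expect to be the main obstacle, is Step 3, since an asymptotic equivalence $G\sim H$ does not automatically pass to the inverse functions. I would make it rigorous by a short bootstrap: the crude two-sided bound $c_1w^{p+1}\log^{-|a|}(2+w^2)\le 2F(w)+2C\le c_2w^{p+1}\log^{|a|}(2+w^2)$ for large $w$ first gives that $v(t)(T-t)^{\frac{2}{p-1}}$ lies between $c|\log(T-t)|^{-|a|}$ and $c'|\log(T-t)|^{|a|}$, hence $\log v(t)=(1+o(1))\frac{2}{p-1}|\log(T-t)|$; substituting this into the sharp asymptotics of $G$ from Step 2 then closes the estimate. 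Equivalently, $H$ is regularly varying of index $-\frac{p-1}{2}$ at $+\infty$, so $G\sim H$ forces $G^{-1}\sim H^{-1}$ at $0^+$ by Karamata's theory.
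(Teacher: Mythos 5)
Your proposal is correct and follows essentially the same route as the paper: energy identity $\,(v')^2=2F(v)+C$, finite blow-up time from the convergence of $\int^{+\infty}dw/\sqrt{2F(w)+C}$, a crude two-sided bound yielding $\log v(t)\sim\frac{2}{p-1}|\log(T-t)|$, and then substitution of this back into the sharp asymptotics of the inverse integral (the paper uses $\delta$-perturbed power bounds where you use logarithmic-factor bounds, and integrates in $t$ rather than in $w$, but these are cosmetic differences). The constant $\kappa_a$ you obtain matches \eqref{inequ-psi-ep-+}.
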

 \begin{proof}
The uniqueness and local existence  of \eqref{A0} are derived by the Cauchy-Lipschitz property.  
Let $T$ be the maximum  time  of the existence of the positive solution, i.e.  $v(t)$ exists for all $t \in [0, T)$. We now prove that $T < + \infty$. By contradiction, we suppose that  the solution exists on $[0, + \infty)$. 
By multiplying equation $\eqref{A0}$ by $v'(t)$ and
integrating  with respect to time on $(0,t)$, we obtain 
\begin{equation}\label{A3}
(v'(t))^2=2F(v(t))+ C,
\end{equation}
where $F$ is defined in \eqref{defF}.
 Using 
\eqref{A0}, we conclude that 
 $v' $ is an increasing function, so for all $t\in [0,\infty)$ we have $v' (t)\geq v'(0)>0$. Then, \eqref{A3} becomes
\begin{equation}\label{A4}
v'(t)=\sqrt{2F(v(t))+ C}.
\end{equation}
Using the fact that $v'(t)> 0$ and $v(t) >0$, we deduce that 
 $$\lim_{t \to +\infty} \int_0^{t} \frac{v'(s)}{\sqrt{2F(v(s))+C}} \s = \lim_{t \to +\infty} \int_0^{t}\s = +\infty.$$  
Let us mention that  
\begin{equation}\label{equivF}
 F(v )\sim\frac{ \psi^{p+1}}{p+1}\log^{{a}}(v^2  + 2), \quad \textrm{as} \quad v\to \infty,
\end{equation}
  and  $\ds{\int_0^{t} \frac{v'(s)}{v^{\frac{p+1}2}(s)\log^{\frac{a}2}(v^2(s)  + 2)} \s}$  is bounded.  Thus,  the contradiction  follows. 

\medskip

 Let us now prove \eqref{inequ-psi-ep-+}. 
By integrating \eqref{A4} with respect to time $(0,t)$, we obtain 
\begin{equation}
T - t =  \int_{v(t)}^{+\infty}\frac{du}{\sqrt{ 2F(u(t))+C}}.
\end{equation}
Thus, for all $\delta \in (0,\frac{ p-1}2)$, there exist $t_\delta$ such that for all $t \in (t_\delta, T)$, we have
$$ \int_{v(t)}^{+\infty} \frac{du}{u^{\frac{p+1}2 + \delta}} \leq T -t \leq  \int_{v(t)}^{+\infty} \frac{du}{u^{\frac{p+1}2  -  \delta}}.$$
This implies  for all $t \in (t_\delta, T)$ that:
$$ C^{-1} (T - t)^{-\frac{1}{\frac{ p - 1}2 + \delta}} \leq v(t) \leq C  (T - t)^{-\frac{1}{ \frac{p - 1}2 - \delta}},$$
from which we have
$$\log v (t)  \sim - \frac{2}{p-1} \log(T  -t) \quad  \text{ as } \;\; t \to T,$$
and
\begin{equation}\label{15janv10}
\log (v^2 + 2) \sim -\frac{4}{p-1} \log(T-t) \quad \text{ as }\;\; t \to T.
\end{equation}
Hence, by using \eqref{A4}, \eqref{15janv10} and \eqref{equivF}, we obtain
\begin{equation}\label{equivpsi}
 \frac{v'(t)}{v^{\frac{p+1}2}(t)}  \sim   \sqrt{\frac{2}{p+1}} \left(\frac{4}{p-1}\right)^{\frac{a}2} |\log(T - t)|^{\frac{a}2},\quad \text{ as } \;\; t \to T.
\end{equation}
By integrating over $(t,T)$, we have 
\begin{align}\frac{2}{p-1} v^{\frac{1-p}2}(t) & \sim  \sqrt{\frac{2}{p+1}}\left(\frac{4}{p-1}\right)^{\frac{a}2} \int^T_t |\log(T - v)|^{\frac{a}2} \v\nonumber\\
& \sim \sqrt{\frac{2}{p+1}} \left( \frac{4}{p-1}\right)^{\frac{a}2} (T - t) |\log(T - t)|^{\frac{a}2} \quad \text{ as }\; t \to T. \label{equivpsip}
\end{align}
Using \eqref{equivpsip}, we see after
straightforward calculations that
$$ v (t)  \sim  \left(\frac{2(p+1)}{(p-1)^2} \right)^{\frac1{p-1}} \left( \frac{4}{p-1}\right)^{-\frac{a}{p-1}} (T - t)^{-\frac2{p-1}} |\log(T - t)|^{-\frac{a}{p-1}} \quad \text{ as }\; t \to T. $$
This concludes the proof of \eqref{inequ-psi-ep-+}.
\end{proof}

%
%

\bigskip

\bigskip

By integrating by parts (see Lemma \ref{FFF}), we can write  
\begin{equation}\label{5jan10}
uf(u)-(p+1)F(u)\sim  
\frac{2a}{p+1}|u|^{p+1}\log^{a-1}(2+u^2), \quad  {\textrm{as}} \ |u|\to \infty,
\end{equation} 
where $f$ and $F$ defined respectively in \eqref{deff} and \eqref{defF}.
 More precisely, we have for all $u\in \er$
\begin{equation}\label{5jan11}
\Big|uf(u)-(p+1)F(u)- 
\frac{2a}{p+1}|u|^{p+1}\log^{a-1}(2+u^2)\Big| \le C+C |u|^{p+1}\log^{a-2}(2+u^2).
\end{equation} 
Thanks to \eqref{5jan10} and \eqref{5jan11}, we will give the  first and the second order terms   in the   expansion of    the nonlinearity $F(x)$ defined in \eqref{defF},  when $|x|$ is  large enough.
More precisely,  we 
  now state the following estimates 
\begin{lem}\label{lemm:esth}
For all $s \geq 1$,  for all $z\in \er$,
\begin{align}
 C^{-1} \ps z f(\ps z))\le C+F\left(\ps z)\le C (1+\ps z f(\ps z)\right),\label{equiv1}\\
F(\ps z)\le C+ C |\ps z|^{\bar p +1},  \
\label{equiv4}\\
F_1(\ps z)\le C+C\frac{ \ps z}{s}f(\ps z),\label{equiv2}\\
F_2(\ps z)\le C+C \frac{ \ps z}{s^2}f(\ps z),\label{equiv3}
\end{align}
where
$\phi$,  $F$, $F_1$ and $F_2$  are given in  \eqref{defphi},    \eqref{defF}, 
 \eqref{defF2},   \eqref{defF123},  and 

\begin{equation}\label{defpb}
 \bar p =
\begin{cases}
p+1 & \text{ if } N=1,2, \\
p+\frac{2}{N-2}-\frac2{N-1} & \text{ if } N \ge 3.
\end{cases}
\end{equation}
\end{lem}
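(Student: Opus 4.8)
The plan is to reduce all four estimates to pointwise bounds on $F$, $F_1$, $F_2$ evaluated at the real number $u=\phi(s)z$, using the asymptotic expansions of Lemma~\ref{FFF} together with the quantitative remainders \eqref{5jan10}--\eqref{5jan11}, and then to split the discussion according to whether $\phi(s)z^2\le1$ or $\phi(s)z^2\ge1$ --- the same dichotomy used later through the sets $A_1(s)$, $A_2(s)$ of \eqref{27nov1}.

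First I would settle \eqref{equiv1}. Since $uf(u)=|u|^{p+1}\log^a(2+u^2)\ge0$ and $f$ is continuous, the ratio $F(u)/(uf(u))$ extends continuously to all of $\er$, with value $\tfrac1{p+1}$ at $u=0$, and by \eqref{estF} it tends to $\tfrac1{p+1}$ as $|u|\to\infty$; hence it stays in a compact subinterval of $(0,\infty)$, so that $c_1\,uf(u)\le F(u)\le c_2\,uf(u)$ for all $u\in\er$ with $0<c_1<c_2$ depending only on $p,a$, which gives \eqref{equiv1}. For \eqref{equiv4} I would start from $F(u)\le C+C\,uf(u)=C+C|u|^{p+1}\log^a(2+u^2)$ (again by \eqref{estF}) and use $\log^a(2+u^2)\le C_\varepsilon(1+|u|^\varepsilon)$ for every $\varepsilon>0$; since $\bar p+1=p+2$ for $N\le2$ and $\bar p+1>p+1$ for $N\ge3$, choosing $\varepsilon$ small enough and using $|u|^{p+1+\varepsilon}\le1+|u|^{\bar p+1}$ yields \eqref{equiv4}.

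The core of the lemma is \eqref{equiv2}--\eqref{equiv3}. Using the elementary identity $\log^{a-1}(2+u^2)=\log^a(2+u^2)/\log(2+u^2)$ I would rewrite
\[
F_1(u)=-\frac{2a}{(p+1)^2}\,\frac{uf(u)}{\log(2+u^2)},
\]
and, by \eqref{estF3} (equivalently, by the expansion behind \eqref{5jan10}--\eqref{5jan11}), $F_2(u)$ equals, up to an additive constant and lower order terms, a multiple of $uf(u)/\log^2(2+u^2)$. When $a\ge0$ the coefficient $-2a\le0$ forces $F_1(u)\le0\le C$, and the sign of $a(a-1)$ similarly yields $F_2(u)\le C$ for $0\le a\le1$; so the only genuinely nontrivial regimes are $a<0$ (and $a>1$ for $F_2$), where the problem reduces to bounding $1/\log(2+\phi(s)^2z^2)$, respectively its square, by $C/s$, respectively $C/s^2$. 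On $\{\phi(s)z^2\le1\}$ the relevant contributions are absorbed into the additive constant, using $\phi(s)z^2\le1$ to control $|\phi(s)z|$ as in the computation leading to \eqref{16dec1}; on $\{\phi(s)z^2\ge1\}$ one has $\phi(s)^2z^2=\phi(s)\cdot(\phi(s)z^2)\ge\phi(s)=e^{2s/(p-1)}s^{-a/(p-1)}$, whence $\log(2+\phi(s)^2z^2)\ge\log\phi(s)\ge\tfrac{s}{p-1}$ for $s$ large. This is exactly the gain that converts $\log^{a-1}$ into $\tfrac1s\log^a$ and $\log^{a-2}$ into $\tfrac1{s^2}\log^a$, hence produces \eqref{equiv2}--\eqref{equiv3}.

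The main obstacle is precisely this last step: establishing the lower bound $\log(2+\phi(s)^2z^2)\gtrsim s$ on the relevant set, uniformly in $s$ and $z$; it relies crucially on the explicit form $\phi(s)=e^{2s/(p-1)}s^{-a/(p-1)}$. Once it is in hand, what remains --- the lower bound $C^{-1}uf(u)\le C+F(u)$ in \eqref{equiv1}, the bookkeeping of the lower order part of $F_2$, and checking that every constant depends only on $p,N,a$ --- is routine; the additive constants $C$ in \eqref{equiv2}--\eqref{equiv4} serve only to absorb the bounded contribution from the region $\{\phi(s)z^2\le1\}$.
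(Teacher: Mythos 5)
Your arguments for \eqref{equiv1} and \eqref{equiv4} are correct and are essentially the paper's: the two-sided bound comes from \eqref{estF} together with continuity of $F(u)/(uf(u))$, and $\log^a(2+u^2)\le C+C|u|^{\bar p-p}$ gives \eqref{equiv4}. For \eqref{equiv2}--\eqref{equiv3} your dichotomy $\phi(s)z^2\le 1$ versus $\phi(s)z^2\ge 1$ is also the one the paper uses, and the half you single out as the key point is indeed correct: on $\{\phi(s)z^2\ge 1\}$ one has $\phi(s)^2z^2\ge\phi(s)$, hence $\log(2+\phi(s)^2z^2)\ge\log\phi(s)\ge cs$ for large $s$ (and $\ge\log 2\ge cs/s_0$ for bounded $s$), which converts $\log^{a-1}$ into $\tfrac1s\log^a$ and $\log^{a-2}$ into $\tfrac1{s^2}\log^a$.

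The gap is in the complementary region. On $\{\phi(s)z^2\le1\}$ you claim the contributions of $F_1(\phi(s)z)$ and $F_2(\phi(s)z)$ are absorbed into the additive constant, ``as in the computation leading to \eqref{16dec1}''. This does not work: the constraint $\phi(s)z^2\le1$ only gives $|\phi(s)z|\le\phi(s)^{1/2}$, which is exponentially large in $s$, so for $a<0$ the quantity $F_1(\phi(s)z)=\frac{2|a|}{(p+1)^2}|\phi(s)z|^{p+1}\log^{a-1}(2+\phi(s)^2z^2)$ can be of size $\phi(s)^{(p+1)/2}$ and is certainly not $\le C$. The estimate \eqref{16dec1} is of a different nature: it bounds $|w|^{p+1}\log^a(2+\phi^2w^2)$, i.e.\ the quantity already multiplied by the exponentially small weight $\phi^{-(p+1)}=e^{-\frac{2(p+1)s}{p-1}}s^{\frac{2a}{p-1}}$, and it is precisely that weight (absent from the pointwise statement of the Lemma) which makes the small set harmless there. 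In fact, taken literally the pointwise inequality \eqref{equiv2} fails for $a<0$ in the intermediate range $1\ll|\phi(s)z|\ll e^{\varepsilon s}$: with $|\phi(s)z|=s$ the left-hand side is of order $s^{p+1}(\log s)^{a-1}$ while the right-hand side is $C+Cs^{p}(\log s)^{a}$, and their ratio tends to infinity; a similar failure occurs for the remainder $F_2^2$ in \eqref{15jan6}. So neither your argument nor the paper's one-line case distinction closes this region; the estimates must either carry an additive term of size $Ce^{\varepsilon(p+1)s}$ for small $\varepsilon>0$, or be stated in the $\phi^{-(p+1)}$-weighted form in which they are actually used in \eqref{id2} and \eqref{sigma11}, where the contribution of $\{|\phi(s)z|\le e^{\varepsilon s}\}$ is $O(e^{-2s})$. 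As written, your sentence ``the relevant contributions are absorbed into the additive constant'' is the missing step, and it is false as stated.
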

\begin{proof} Note that \eqref{equiv1} obviously follows from \eqref{estF}.
Similarly, by taking into account  the  inequality  $\log^a (2+u^2)\le C+  C |u|^{\bar p-p}$   and  \eqref{estF} we  conclude
\eqref{equiv4}.
In order to derive estimates \eqref{equiv2} and \eqref{equiv3}, considering the first case $z^2\ps \geq 4$, then the case $z^2\ps \leq 4$, we would obtain
 \eqref{equiv2} and \eqref{equiv3} by using \eqref{estF0}, \eqref{estF}  and\eqref{estF3}.
 This ends the proof of Lemma \ref{lemm:esth}.\Box
\end{proof}
\def\cprime{$'$} \def\cprime{$'$}
\providecommand{\bysame}{\leavevmode\hbox to3em{\hrulefill}\thinspace}
\providecommand{\MR}{\relax\ifhmode\unskip\space\fi MR }
\providecommand{\MRhref}[2]{%
  \href{http://www.ams.org/mathscinet-getitem?mr=#1}{#2}
}
\providecommand{\href}[2]{#2}


\noindent{\bf Address}:\\
  Imam Abdulrahman Bin Faisal University
P.O. Box 1982 Dammam, Saudi Arabia.\\
\vspace{-7mm}
\begin{verbatim}
e-mail:  mahamza@iau.edu.sa
\end{verbatim}
Universit\'e Paris 13, Institut Galil\'ee,
Laboratoire Analyse, G\'eom\'etrie et Applications, CNRS UMR 7539,
99 avenue J.B. Cl\'ement, 93430 Villetaneuse, France.\\
\vspace{-7mm}
\begin{verbatim}
e-mail: Hatem.Zaag@univ-paris13.fr
\end{verbatim}


\begin{thebibliography}{10}




\bibitem{Apndeta95}
S.~Alinhac, \emph{Blowup for nonlinear hyperbolic equations}, Progress in
  Nonlinear Differential Equations and their Applications, vol.~17,
  Birkh\"auser Boston Inc., Boston, MA, 1995. 



\bibitem{AM}
C.~Antonini and F.~Merle.
\emph{ Optimal bounds on positive blow-up solutions for a semilinear wave
  equation},
 Internat. Math. Res. Notices, (21):1141--1167, 2001.



\bibitem{Bia75}
I.~B lynicki-Birula and J.~Mycielski.  \emph{ Wave equations with logarithmic nonlinearities},
Bull. Acad. Pol. Sc. XXIII, 461--466, 1975.


\bibitem{Bia76}
\bysame, \emph{ Nonlinear wave mechanics}, Ann. Physics 100, no. 1-2, 62–93, 1976.

\bibitem{Bjnmp01}
P.~Bizo{\'n}, \emph{Threshold behavior for nonlinear wave equations}, J.
  Nonlinear Math. Phys, \textbf{8}, 35--41, 2001.

\bibitem{BBMWnonl10}
P.~Bizo{\'n}, P.~Breitenlohner, D.~Maison, and A.~Wasserman, \emph{Self-similar
  solutions of the cubic wave equation}, Nonlinearity, \textbf{23}, 225--236, 2010. 

\bibitem{BCTnonl04}
P.~Bizo\'n, T.~Chmaj and Z.~Tabor, \emph{On blowup for semilinear wave
  equations with a focusing nonlinearity}, Nonlinearity, \textbf{17}, 2187--2201, 2004.

\bibitem{BZnonl09}
P.~Bizo{\'n} and A.~Zengino{\u{g}}lu, \emph{Universality of global dynamics for
  the cubic wave equation}, Nonlinearity, \textbf{22}, 
  2473--2485, 2009.

\bibitem{BKnonl94}
J.~ Bricmont and A.~Kupiainen, \emph{Universality in blow-up for nonlinear heat equations}, Nonlinearity, \textbf{7(2)}, 539--575,
1994.


\bibitem{CFarma85}
L.~A. Caffarelli and A.~Friedman, \emph{Differentiability of the blow-up curve
  for one-dimensional nonlinear wave equations}, Arch. Rational Mech. Anal.,
  \textbf{91}, 83--98, 1985.

\bibitem{CFtams86}
\bysame, \emph{The blow-up boundary for nonlinear wave equations}, Trans. Amer.
  Math. Soc., \textbf{297},  223--241, 1986.




\bibitem{CZcpam13}
R.~C\^ote and H.~Zaag, \emph{Construction of a multisoliton blowup solution to
  the semilinear wave equation in one space dimension}, Comm. Pure Appl. Math.,
  \textbf{66}, 1541--1581, 2013. 



\bibitem{DDuke}
R.~Donninger, \emph{ Strichartz estimates in similarity coordinates and stable blowup for the critical wave
equation.},  Duke Mathematical Journal,  \textbf{  166(9)}, 1627–1683, 2017.


\bibitem{DSdpde12}
R.~Donninger and B.~Sch{\"o}rkhuber, \emph{Stable self-similar blow up for
  energy subcritical wave equations}, Dyn. Partial Differ. Equ., \textbf{9}, 63--87, 2012. 

\bibitem{DStams14}
\bysame, \emph{Stable blow up dynamics for energy supercritical wave
  equations}, Trans. Amer. Math. Soc., \textbf{366}, 2167--2189, 2014.



\bibitem{DScmp16}
\bysame, \emph{ On blowup in supercritical wave equations}, 
 Communications in Mathematical Physics, \textbf{ 346(3)}, 907--943, 2016.




\bibitem{DSaihp17}
\bysame, \emph{Stable blowup for wave equations in odd space dimensions}, Annales
de l’Institut Henri Poincare (C) Non Linear Analysis,  \textbf{34(5)}, 1181--1213, 2017.



\bibitem{DVZ}
G.K.~Duong, V.T.~ Nguyen and H.~Zaag, \emph{Construction of a stable blowup solution with a prescribed behavior for
 a non-scaling-invariant semilinear heat equation},  Tunisian J. Math., \textbf{1(1)}, 13--45, 2019.


\bibitem{H1}
M.A.~Hamza, \emph{The blow-up rate for strongly perturbed semilinear wave equations in the conformal regime without a radial assumption},
Asymptotic Analysis,  \textbf{ 97, no. 3-4},  351--378, 2016.














\bibitem{omar1}
M.A.~Hamza and O.~Saidi, \emph{ The blow-up rate for strongly
perturbed semilinear wave equations}, 
J. Dyn. Diff.  Equ.,  \textbf{26}, 1115--1131, 2014.

\bibitem{omar2}
\bysame, \emph{
 The blow-up rate for strongly perturbed semilinear wave equations
in the conformal case},
 Math Phys Anal Geom,  \textbf{ 18(1)}, Art. 15, 2015.

\bibitem{HZjhde12}
M.A. Hamza and H.~Zaag, \emph{A {L}yapunov functional and blow-up results for a
  class of perturbations for semilinear wave equations in the critical case},
  J. Hyperbolic Differ. Equ., \textbf{9}, 195--221, 2012.

\bibitem{HZnonl12}
\bysame, \emph{A {L}yapunov functional and blow-up results for a class of
  perturbed semilinear wave equations}, Nonlinearity, \textbf{25},
  2759--2773, 2012.

\bibitem{HZdcds13}
\bysame, \emph{Blow-up results for semilinear wave equations in the
  super-conformal case}, Discrete Contin. Dyn. Syst. Ser. B, \textbf{18}, 2315--2329, 2013.


\bibitem{HZkg12}
 \bysame, \emph{ Blow-up behavior for the klein-gordon and
  other perturbed semilinear wave equations}, 
Bull. Sci. Math.,  \textbf{137},  1087--1109, 2012.



\bibitem{KL1cpde93}
S.~Kichenassamy and W.~Littman, \emph{Blow-up surfaces for nonlinear wave
  equations. {I}}, Comm. Partial Differential Equations, \textbf{18},
 431--452, 1993.

\bibitem{KL2cpde93}
\bysame, \emph{Blow-up surfaces for nonlinear wave equations. {I}{I}}, Comm.
  Partial Differential Equations, \textbf{18}, 1869--1899, 1993.
 

\bibitem{KSVma14}
R.~Killip, B.~Stovall, and M.~Vi\c{s}an, \emph{Blowup behaviour for the
  nonlinear {K}lein--{G}ordon equation}, Math. Ann., \textbf{358},
   289--350, 2014. 
%

\bibitem{Ltams74}
H.~A. Levine, \emph{Instability and nonexistence of global solutions to
  nonlinear wave equations of the form {$Pu\sb{tt}=-Au+{\mathcal F}(u)$}}, Trans.
  Amer. Math. Soc., \textbf{192}, 1--21, 1974. 


\bibitem{LT}
H.~A. Levine and G.~Todorova.
\emph{ Blow up of solutions of the {C}auchy problem for a wave equation with
  nonlinear damping and source terms and positive initial energy},
  SIAM J. Math. Anal.,  \textbf{5}, 93--805,  2001.




\bibitem{MZajm03}
F.~Merle and H.~Zaag, \emph{Determination of the blow-up rate for the
  semilinear wave equation}, Amer. J. Math., \textbf{125},  1147--1164, 2003.

\bibitem{MZimrn05}
\bysame, \emph{Blow-up rate near the blow-up surface for semilinear wave
  equations}, Internat. Math. Res. Notices, \textbf{19}, 1127--1156, 2005.

\bibitem{MZma05}
\bysame, \emph{Determination of the blow-up rate for a critical semilinear wave
  equation}, Math. Annalen, \textbf{331}, 395--416, 2005.

\bibitem{MZjfa07}
\bysame, \emph{Existence and universality of the blow-up profile for the
  semilinear wave equation in one space dimension}, J. Funct. Anal.,
  \textbf{253}, 43--121,  2007.

\bibitem{MZcmp08}
\bysame, \emph{Openness of the set of non characteristic points and regularity
  of the blow-up curve for the $1$ d semilinear wave equation}, Comm. Math.
  Phys., \textbf{282}, 55--86, 2008.


\bibitem{MZbsm11}
\bysame, \emph{Blow-up behavior outside the origin for a semilinear wave
  equation in the radial case}, Bull. Sci. Math., \textbf{135},
  353--373, 2011. 

\bibitem{MZajm11}
\bysame, \emph{Existence and classification of characteristic points at blow-up
  for a semilinear wave equation in one space dimension}, Amer. J. Math.,
  \textbf{134}, 581--648, 2012.

\bibitem{MZisol10}
\bysame, \emph{Isolatedness of characteristic points for a semilinear wave
  equation in one space dimension}, Duke Math. J., \textbf{161},
  2837--2908, 2012.



\bibitem{MZods}
\bysame, \emph{On the stability of the notion of non-characteristic point and
  blow-up profile for semilinear wave equations}, Comm. Math. Phys.,  \textbf{333(3)}, 1529--1562, 2015.
  
\bibitem{MZods14}
\bysame, \emph{Dynamics near explicit stationary solutions in similarity variables for solutions of a semilinear
 wave equation in higher dimensions}. Trans. Amer. Math. Soc., \textbf{368(1)}, 27--87, 2016.












\bibitem{R1}
T.~Roy. \emph{ Global existence of smooth solutions of a 3D log-log energy-supercritical wave equation,}
Anal. PDE, \textbf{2(3)}, 261--280, 2009.


\bibitem{STV} 
J. ~Serrin, G.~Todorova and E.~Vitillaro. \emph{ Existence for a nonlinear wave equation with damping and source terms,} Differential Integral Equations \textbf{16 (1)}, 13--50, 2003.



\bibitem{S1} H.S.~Shih.  \emph{  Some results on scattering for log-subcritical and log-supercritical nonlinear wave
equations,} Anal. PDE., \textbf{6(1)} 1--24, 2013.

\bibitem{T1} T.~Tao.   \emph{ Global regularity for a logarithmically supercritical defocusing nonlinear wave equation
for spherically symmetric data,} J. Hyp. Diff. Eq., \textbf{4(2)}, 259--265, 2007.
%
%




\bibitem{T2}
G.~Todorova.
\emph{  Cauchy problem for a non linear wave equation with non linear damping
  and source terms}, Nonlinear Anal., \textbf{41}, 891--905, 2000.





\end{thebibliography}
\end{document}